\providecommand{\U}[1]{\protect\rule{.1in}{.1in}}
\newtheorem{theorem}{Theorem}
\newtheorem{definition}[theorem]{Definition}
\newtheorem{example}[theorem]{Example}
\newtheorem{lemma}[theorem]{Lemma}
\newtheorem{proposition}[theorem]{Proposition}
\newtheorem{remark}[theorem]{Remark}
\newenvironment{proof}[1][Proof]{\noindent\textbf{#1.} }{\ \rule{0.5em}{0.5em}}
\renewcommand{\thefootnote}{\fnsymbol{footnote}}
\begin{document}

\title{Multivalued Stochastic Delay Differential Equations and Related Stochastic
Control Problems}
\author{Bakarime Diomande$^{a,1}$, Lucian Maticiuc$^{a,b,2,\ast}$\bigskip\\{\small $^{a}$ Faculty of Mathematics, \textquotedblleft Alexandru Ioan
Cuza\textquotedblright\ University,}\\{\small Carol I Blvd., no. 11, Ia\c{s}i, 700506, Romania}\\{\small $^{b}$ Department of Mathematics, \textquotedblleft Gheorghe
Asachi\textquotedblright\ Technical University,}\\{\small Carol I Blvd., no. 11, Ia\c{s}i, 700506, Romania}}
\maketitle

\begin{abstract}
We study the existence and uniqueness of a solution for the multivalued
stochastic differential equation with delay (the multivalued term is of
subdifferential type):%
\[
\left\{
\begin{array}
[c]{l}%
dX(t)+\partial\varphi\left(  X(t)\right)  dt\ni b\left(
t,X(t),Y(t),Z(t)\right)  dt\medskip\\
\quad\quad\quad\quad\quad\quad\quad\quad\quad\quad+\sigma\left(
t,X(t),Y(t),Z(t)\right)  dW(t),~t\in(s,T],\medskip\\
X(t)=\xi\left(  t-s\right)  ,\;t\in\left[  s-\delta,s\right]  .
\end{array}
\right.
\]

Specify that in this case the coefficients at time $t$ depends also on
previous values of $X\left(  t\right)  $ through $Y(t)$ and $Z(t)$. Also $X$
is constrained with the help of a bounded variation feedback law $K$ to stay
in the convex set $\overline{\mathrm{Dom}\left(  \varphi\right)  }$.

Afterwards we consider optimal control problems where the state $X$ is a
solution of a controlled delay stochastic system as above. We establish the
dynamic programming principle for the value function and finally we prove that
the value function is a viscosity solution for a suitable
Hamilton-Jacobi-Bellman type equation.

\end{abstract}

\footnotetext[1]{{\scriptsize Corresponding author.}}

\renewcommand{\thefootnote}{\arabic{footnote}} \footnotetext[1]%
{{\scriptsize The work of this author was supported by the project
\textquotedblleft Deterministic and stochastic systems with state
constraints\textquotedblright, code 241/05.10.2011.}} \footnotetext[2]%
{{\scriptsize The work of this author was supported by POSDRU/89/1.5/S/49944
project.}} \renewcommand{\thefootnote}{\fnsymbol{footnote}}
\footnotetext{\textit{{\scriptsize E-mail addresses:}}
{\scriptsize bakarime.diomande@yahoo.com (Bakarime Diomande),
lucian.maticiuc@ymail.com (Lucian\ Maticiuc).}}

\textbf{AMS Classification subjects}: 60H10, 93E20, 49L20, 49L25.$\smallskip$

\textbf{Keywords or phrases}: Multivalued SDE with delay; Dynamic programming
principle; Hamilton-Jacobi-Bellman equation; Viscosity solution.

\section{Introduction}

Stochastic (or deterministic) dynamical systems with delay appear in various
applications where the dynamics are subject to propagation delay. Moreover,
there is a natural motivation in considering the problem of delayed stochastic
differential equations with constraints on the state. Our study concerns first
the existence and uniqueness of a solution for the following stochastic delay
differential equation of multivalued type, also called stochastic delay
variational inequality (where the solution is forced, due to the presence of
term $\partial\varphi\left(  X(t)\right)  $, to remains into the convex set
$\overline{\mathrm{Dom}\left(  \varphi\right)  }\,$):%
\begin{equation}
\left\{
\begin{array}
[c]{r}%
dX(t)+\partial\varphi\left(  X(t)\right)  dt\ni b\left(
t,X(t),Y(t),Z(t)\right)  dt+\sigma\left(  t,X(t),Y(t),Z(t)\right)
dW(t),~t\in(s,T],\medskip\\
\multicolumn{1}{l}{X(t)=\xi\left(  t-s\right)  ,\;t\in\left[  s-\delta
,s\right]  ,}%
\end{array}
\right.  \label{SVI delayed}%
\end{equation}
where $\left(  s,\xi\right)  \in\lbrack0,T)\times\mathcal{C}\big(\left[
-\delta,0\right]  ;\overline{\mathrm{Dom}\left(  \varphi\right)  }\,\big)$ is
arbitrary fixed, $b$ and $\sigma$ are given functions, $\delta\geq0$ is the
fixed delay,%
\begin{equation}
Y(t):=\int_{-\delta}^{0}e^{\lambda r}X(t+r)dr,\quad Z(t):=X(t-\delta)
\label{def of Y,Z}%
\end{equation}
and $\partial\varphi$ is the subdifferential operator associated to $\varphi$.

We mention, as an example, the particular case of $\varphi$ being the
indicator function $I_{\bar{D}}:\mathbb{R}^{d}\rightarrow(-\infty,+\infty]$ of
a nonempty closed convex set $\bar{D}\subset\mathbb{R}^{d}$, i.e. $I_{\bar{D}%
}\left(  x\right)  =0$ if $x\in\bar{D}$ and $+\infty$ if $x\notin\bar{D}$. The
subdifferential is given by%
\[
\partial{I}_{\bar{D}}(x)=\left\{
\begin{array}
[c]{ll}%
0, & \text{if }x\in\mathrm{Int}\left(  D\right)  ,\medskip\\
\mathcal{N}_{\bar{D}}(x), & \text{if }x\in\mathrm{Bd}\left(  D\right)
,\medskip\\
\emptyset, & \text{if }x\notin\bar{D},
\end{array}
\right.
\]
where $\mathcal{N}_{\bar{D}}(x)$ denotes the closed external cone normal to
$\bar{D}$ at $x\in\mathrm{Bd}\left(  D\right)  $.

In this case, the supplementary drift $-\partial{I}_{\bar{D}}(X\left(
t\right)  )$ is an \textquotedblleft inward push\textquotedblright\ that
forbids the process $X\left(  t\right)  $ to leave the domain $\bar{D}$ and
this drift acts only when $X\left(  t\right)  $ reach the boundary of $\bar
{D}$. In the case of $\bar{D}$ being the closed positive orthant from
$\mathbb{R}^{d}$ we recall article Kinnally \& Williams \cite{ki-wi/10} (see
also the reference therein for a more complete literature scene of applications).

The next problem is to minimize the cost functional%
\begin{equation}
J(s,\xi;u)=\mathbb{E}\big[\int_{s}^{T}f\left(  t,X(t),Y(t),u(t)\right)
dt+h\left(  X(T),Y(T)\right)  \big] \label{func cost}%
\end{equation}
over a class of control strategies denoted by $\mathcal{U}\left[  s,T\right]
$ (here $f$ and $h$ are only continuous and with polynomial growth).

We define the value function%
\begin{equation}%
\begin{array}
[c]{l}%
V\left(  s,\xi\right)  =\inf_{u\in\mathcal{U}\left[  s,T\right]  }%
J(s,\xi;u),\;\left(  s,\xi\right)  \in\lbrack0,T)\times\mathcal{C}\big(\left[
-\delta,0\right]  ;\overline{\mathrm{Dom}\left(  \varphi\right)
}\,\big)\medskip\\
V(T,\xi)=h(X(0),Y(0)),\;\xi\in\mathcal{C}\big(\left[  -\delta,0\right]
;\overline{\mathrm{Dom}\left(  \varphi\right)  }\,\big)
\end{array}
\label{funct value}%
\end{equation}
and the second aim will be to prove that $V$ satisfies the dynamic programming
principle and, under the assumption $V\left(  s,\xi\right)  =V\left(
s,x,y\right)  $, the value function is a viscosity solution for
Hamilton-Jacobi-Bellman equation:%
\[
\left\{
\begin{array}
[c]{r}%
\displaystyle-\frac{\partial V}{\partial s}(s,x,y)+\sup_{u\in\mathrm{U}%
}\mathcal{H}\big(s,x,y,z,u,-D_{x}V\left(  s,x,y\right)  ,-D_{xx}^{2}V\left(
s,x,y\right)  \big)\medskip\\
-\langle x-e^{-\lambda\delta}z-\lambda y,D_{y}V\left(  s,x,y\right)
\rangle\in\langle-D_{x}V\left(  s,x,y\right)  ,\partial\varphi\left(
x\right)  \rangle,\medskip\\
\text{for }\left(  s,x,y,z\right)  \in\left(  0,T\right)  \times
\overline{\mathrm{Dom}\left(  \varphi\right)  }\times\mathbb{R}^{2d}%
,\medskip\\
\multicolumn{1}{l}{V\left(  T,x,y\right)  =h\left(  x,y\right)  \text{ for
}\left(  x,y\right)  \in\overline{\mathrm{Dom}\left(  \varphi\right)  }%
\times\mathbb{R}^{d},}%
\end{array}
\right.
\]
where $\mathcal{H}:\left[  0,T\right]  \times\mathbb{R}^{3d}\times
\mathrm{U}\times\mathbb{R}^{d}\times\mathbb{R}^{d\times d}\rightarrow
\mathbb{R}$ is defined by%
\[
\mathcal{H}\left(  s,x,y,z,u,q,X\right)  :=\left\langle b\left(
s,x,y,z,u\right)  ,q\right\rangle +\frac{1}{2}\mathrm{Tr}\left(  \sigma
\sigma^{\ast}\right)  \left(  s,x,y,z,u\right)  X-f\left(  s,x,y,u\right)  .
\]
We recall that the existence problem for stochastic equation
(\ref{SVI delayed}) without the multivalued term $\partial\varphi$ has been
treated by Mohammed in \cite{mo/98} (see also \cite{mo/84}). On the other
hand, the variational inequality%
\begin{equation}
\left\{
\begin{array}
[c]{r}%
dX(t)+\partial\varphi\left(  X(t)\right)  dt\ni b\left(  t,X(t)\right)
dt+\sigma\left(  t,X(t)\right)  dW(t),~t\in(s,T],\medskip\\
\multicolumn{1}{l}{X(s)=\xi,}%
\end{array}
\right.  \label{SVI not-delayed 2}%
\end{equation}
has been considered in Bensoussan \& R\u{a}scanu \cite{be-ra/94} (for the
first time) and in Asiminoaei \& R\u{a}\c{s}canu \cite{as-ra/97} (where the
existence is proved through a penalized method). After that the results are
extended in R\u{a}\c{s}canu \cite{ra/96} (the Hilbert space framework) and in
C\'{e}pa \cite{ce/98} (the finite dimensional case) by considering a maximal
monotone operator $A$ instead of $\partial\varphi$:%
\begin{equation}
\left\{
\begin{array}
[c]{r}%
dX(t)+A\left(  X(t)\right)  dt\ni b\left(  t,X(t)\right)  dt+\sigma\left(
t,X(t)\right)  dW(t),~t\in(s,T],\medskip\\
\multicolumn{1}{l}{X(s)=\xi.}%
\end{array}
\right.  \label{SVI not-delayed}%
\end{equation}
More recently, in \cite{bu-ma-ra/13}, the existence results for
(\ref{SVI not-delayed 2}) have been extended to the non-convex domains case by
considering the Fr\'{e}chet subdifferential $\partial^{-}\varphi$ in the place
of $\partial\varphi$.

Stochastic optimal control subject to multivalued stochastic equation
(\ref{SVI not-delayed}) has been treated in Z\u{a}linescu \cite{za/08} where
it is prove first the existence of a weak solution for equation of type
(\ref{SVI not-delayed}) and after the existence of an optimal relaxed control.
In Zalinescu \cite{za/12}, the author consider the controlled equation
(\ref{SVI not-delayed}) and the cost functional to minimize given by
$J(s,x;u)=\mathbb{E}\big[\int_{s}^{T}f\left(  t,X(t),u(t)\right)  dt+h\left(
X(T)\right)  \big].$ In order to prove the dynamic programming principle and
the viscosity property of the value function, the Yosida approximation of
operator $A$ and the optimal control problem for the penalized equation were considered.

In the case of a controlled system of type (\ref{SVI delayed}) we mention the
recent work \cite{di-za/13} where are establish sufficient and necessary
conditions of the maximum principle; in the case of $\varphi$ being zero, we
refer to the paper Larssen \cite{la/02} where it is establish, under Lipschitz
assumptions of the coefficients $f$ and $h$, that the value function satisfies
the dynamic programming principle. This work allowed Larssen \& Risebro in
\cite{la-ri/03} to prove, in the frame of the delay systems and under some
supplementary assumption on $V$, that the value function is viscosity solution
for a Hamilton-Jacobi-Bellman equation.

Concerning the problem of finding value function (associated to a system with
delay), it is very difficult to treat it since the space of initial data is
infinite dimensional. Nonetheless it happens that choosing a specific
structure of the dependence of the past and under certain conditions the
control problem for systems with delay can be reduce to a finite dimensional
problem and some results have been obtained (see, e.g., Kolmanovskii \&
Shaikhet \cite{ko-sh/96}). In the same framework, Elsanousi \& Larssen in
\cite{el-la/01} have studied a delayed linear system with $f$ and $h$ of HARA
utility type and Elsanousi, Oksendal \& Sulem \cite{el-ok-su/00} have
considered a singular stochastic control problem for a certain linear delay
system. Larssen \& Risebro in \cite{la-ri/03} are seeking for conditions that
ensure to a solution of Hamilton-Jacobi-Bellman equation to be independent of
$z=z\left(  \xi\right)  :=\xi\left(  -\delta\right)  $ and to depend only on
$x=x\left(  \xi\right)  :=\xi\left(  0\right)  $ and $y=y\left(  \xi\right)
:=\int_{-\delta}^{0}e^{\lambda r}\xi\left(  r\right)  dr$.

Therefore, in order to show that $V$ given by (\ref{funct value}) is viscosity
solution of a Hamilton-Jacobi-Bellman equation, the assumption that the value
function $V$ depend on $\xi$ only through $x$ and $y$ occurs naturally.

The paper is organized as follows: in section 2 we introduce the stochastic
delay variational inequalities and we provide the notations and the
assumptions used throughout the paper; some a priori estimates of the solution
are also given. The last part of this section is devoted to the proof of the
existence theorem. Section 3 is dedicated to the optimal control problem: we
first show that the value function satisfies the dynamic programming
principle, then it is proved that the value function is a viscosity solution
of a proper Hamilton-Jacobi-Bellman equation.

\section{Stochastic variational inequalities with delay}

\subsection{Notations and assumptions}

Let $s\in\lbrack0,T)$ be arbitrary but fixed and $\left(  \Omega
,\mathcal{F},\{\mathcal{F}_{t}^{s}\}_{t\geq s},\mathbb{P}\right)  $ be a
complete probability space. The process $\left\{  W\left(  t\right)  \right\}
_{t\geq s}$ is a $n$-dimensional standard Brownian motion with $W\left(
s\right)  =0$ and we suppose that $\{\mathcal{F}_{t}^{s}\}_{t\geq s}$ is a filtration.

The notation $\mathrm{BV}\left(  \left[  a,b\right]  ;\mathbb{R}^{d}\right)  $
stands for the space of bounded variation functions defined on $\left[
a,b\right]  $ and $\mathcal{C}([a,b];\mathbb{R}^{d})$ for the space of
continuous functions on $[a,b]$ endowed with the supremum norm:%
\[
\left\Vert X\right\Vert _{\mathcal{C}([a,b];\mathbb{R}^{d})}=\sup_{s\in\lbrack
a,b]}\left\vert X(s)\right\vert .
\]
If $K\in\mathrm{BV}\left(  \left[  a,b\right]  ;\mathbb{R}^{d}\right)  $ then
$\left\Vert K\right\Vert _{\mathrm{BV}\left(  \left[  a,b\right]
;\mathbb{R}^{d}\right)  }$ will denote its variation on $\left[  a,b\right]  $.

For $1\leq p<+\infty$ let $L_{ad}^{p}\left(  \Omega;\mathcal{C}\left(  \left[
a,b\right]  ;\mathbb{R}^{d}\right)  \right)  $ be the closed linear subspace
of stochastic processes $X\in L^{p}\left(  \Omega;\mathcal{C}\left(  \left[
a,b\right]  ;\mathbb{R}^{d}\right)  \right)  $ which are $\mathcal{F}_{t}^{s}$-adapted.

The equation envisaged is%
\begin{equation}
\left\{
\begin{array}
[c]{r}%
dX(t)+\partial\varphi\left(  X(t)\right)  dt\ni b\left(
t,X(t),Y(t),Z(t)\right)  dt+\sigma\left(  t,X(t),Y(t),Z(t)\right)
dW(t),\medskip\\
t\in(s,T],\medskip\\
\multicolumn{1}{l}{X(t)=\xi\left(  t-s\right)  ,\;t\in\left[  s-\delta
,s\right]  ,}%
\end{array}
\right.  \label{SVI delayed 2}%
\end{equation}
where $\delta\geq0$ is a fixed delay and $\xi\in\mathcal{C}\big(\left[
-\delta,0\right]  ;\overline{\mathrm{Dom}\left(  \varphi\right)  }\,\big)$ is
arbitrary fixed. The functions $Y$ and $Z$ are defined by (\ref{def of Y,Z}).

We will need the following assumptions:

\begin{itemize}
\item[$\mathrm{(H}_{1}\mathrm{)}$] The function $\varphi:\mathbb{R}%
^{d}\rightarrow(-\infty,+\infty]$ is convex and lower semicontinuous (l.s.c.)
such that%
\[
\mathrm{Int}\left(  \mathrm{Dom}\left(  \varphi\right)  \right)  \neq
\emptyset,
\]
where $\mathrm{Dom}\left(  \varphi\right)  :=\{x\in\mathbb{R}^{d}%
:\varphi\left(  x\right)  <+\infty\}$ and suppose that$^{\ast}$%
\footnotetext[1]{$\;${\scriptsize In fact this assumption is not a restriciton
since we can take $x_{0}\in\mathrm{Int}\left(  \mathrm{Dom}\left(
\varphi\right)  \right)  $, $x_{0}^{\ast}\in\partial\varphi\left(
x_{0}\right)  $ and we can replace $\varphi\left(  x\right)  $ by
$\varphi\left(  x+x_{0}\right)  -\varphi\left(  x_{0}\right)  -\left\langle
x_{0}^{\ast},x_{0}\right\rangle .$}}%
\[
0\in\mathrm{Int}\left(  \mathrm{Dom}\left(  \varphi\right)  \right)
\quad\text{and}\quad\varphi(x)\geq\varphi\left(  0\right)  =0,\;\forall
\,x\in\mathbb{R}^{d};
\]

\end{itemize}

We recall that the subdifferential of the function $\varphi$ is defined by%
\[
\partial\varphi(x)=\left\{  y\in\mathbb{R}^{d}:\left\langle y,z-x\right\rangle
+\varphi(x)\leq\varphi(z),\;\forall z\in\mathbb{R}^{d}\right\}
\]
and by $\left(  x,x^{\ast}\right)  \in\partial\varphi$ we will understand that
$x\in\mathrm{Dom}(\partial\varphi)$ and $x^{\ast}\in\partial\varphi(x)$, where%
\[
\mathrm{Dom}(\partial\varphi):=\left\{  x\in\mathbb{R}^{d}:\partial
\varphi(x)\neq\emptyset\right\}  .
\]

\begin{example}
A particular case of $\partial\varphi$ is obtained by considering a nonempty
closed convex subset $\bar{D}$ of $\mathbb{R}^{d}$ and the indicator function
$I_{\bar{D}}:\mathbb{R}^{d}\rightarrow(-\infty,+\infty]$, i.e.%
\[
I_{\bar{D}}\left(  x\right)  :=\left\{
\begin{array}
[c]{rl}%
0, & \text{if }x\in\bar{D},\medskip\\
+\infty, & \text{if }x\in\mathbb{R}^{d}\setminus\bar{D},
\end{array}
\right.
\]
which is a proper convex lower semicontinuous.

In this case the subdifferential of $I_{\bar{D}}$ becomes:%
\[
\partial{I}_{\bar{D}}(x)=\left\{
\begin{array}
[c]{ll}%
0, & \text{if }x\in\mathrm{Int}\left(  D\right)  ,\medskip\\
\mathcal{N}_{\bar{D}}(x)=\{\langle y,z-x\rangle\leq0,~z\in D\}, & \text{if
}x\in\mathrm{Bd}\left(  D\right)  ,\medskip\\
\emptyset, & \text{if }x\notin\bar{D},
\end{array}
\right.
\]
where $\mathcal{N}_{\bar{D}}(x)$ denotes the closed external cone normal to
$\bar{D}$ at $x\in\mathrm{Bd}\left(  D\right)  $.
\end{example}

The existence of a solution for (\ref{SVI delayed 2}) will be shown using the
penalized problem. More precisely we considered the Yosida approximation of
the operator $\partial\varphi$: for $\epsilon\in(0,1]$ let $\nabla
\varphi_{\epsilon}$ be the gradient of $\varphi_{\epsilon}$, where
$\varphi_{\epsilon}$ is the Moreau-Yosida regularization of $\varphi$, i.e.%
\begin{equation}
\varphi_{\epsilon}(x):=\inf\{\frac{1}{2\epsilon}|v-x|^{2}+\varphi
(v):v\in\mathbb{R}^{d}\},\;\epsilon>0, \label{Yosida regular}%
\end{equation}
which is a $C^{1}$ convex function.

We recall some useful inequalities (since $\varphi$ satisfies assumption
$\mathrm{(H}_{1}\mathrm{)}$): for all $x,y\in\mathbb{R}^{d}$%
\begin{equation}%
\begin{array}
[c]{rl}%
\left(  i\right)  & \varphi_{\epsilon}\left(  x\right)  =\dfrac{\epsilon}%
{2}\left\vert \nabla\varphi_{\epsilon}(x)\right\vert ^{2}+\varphi\left(
x-\epsilon\nabla\varphi_{\epsilon}(x)\right)  ,\medskip\\
\left(  ii\right)  & \varphi\left(  J_{\epsilon}\left(  x\right)  \right)
\leq\varphi_{\epsilon}\left(  x\right)  \leq\varphi\left(  x\right)
,\medskip\\
\left(  iii\right)  & \nabla\varphi_{\epsilon}\left(  x\right)  =\partial
\varphi_{\epsilon}\left(  x\right)  \in\partial\varphi\left(  J_{\epsilon
}\left(  x\right)  \right)  ,\medskip\\
\left(  iv\right)  & \left\vert \nabla\varphi_{\epsilon}(x)-\nabla
\varphi_{\epsilon}(y)\right\vert \leq\dfrac{1}{\epsilon}\left\vert
x-y\right\vert ,\medskip\\
\left(  v\right)  & \left\langle \nabla\varphi_{\epsilon}(x)-\nabla
\varphi_{\epsilon}(y),x-y\right\rangle \geq0,\medskip\\
\left(  vi\right)  & \left\langle \nabla\varphi_{\epsilon}(x)-\nabla
\varphi_{\delta}(y),x-y\right\rangle \geq-(\epsilon+\delta)\left\langle
\nabla\varphi_{\epsilon}(x),\nabla\varphi_{\delta}(y)\right\rangle ,\medskip\\
\left(  vii\right)  & \forall u_{0}\in\mathrm{Int}\left(  \mathrm{Dom}\left(
\varphi\right)  \right)  ,\;\exists r_{0}>0,\exists M_{0}>0\text{ such
that}\medskip\\
& \quad r_{0}\left\vert \nabla\varphi_{\epsilon}\left(  x\right)  \right\vert
\leq\left\langle \nabla\varphi_{\epsilon}\left(  x\right)  ,x-u_{0}%
\right\rangle +M_{0},\;\forall\epsilon>0,\;\forall x\in\mathbb{R}^{d},
\end{array}
\label{ineq Yosida}%
\end{equation}
where $J_{\epsilon}\left(  x\right)  :=x-\epsilon\nabla\varphi_{\epsilon}(x)$
(for the proof see \cite{ba/10} and \cite{as-ra/97} for the last one).

Moreover, since $\varphi(x)\geq\varphi\left(  0\right)  =0,\;\forall
\,x\in\mathbb{R}^{d}$,%
\begin{equation}%
\begin{array}
[c]{rl}%
\left(  viii\right)  & \left\vert \nabla\varphi_{\epsilon}\left(  x\right)
\right\vert \leq\dfrac{1}{\epsilon}\left\vert x\right\vert \medskip\\
\left(  ix\right)  & \dfrac{\epsilon}{2}\left\vert \nabla\varphi_{\epsilon
}\left(  x\right)  \right\vert ^{2}\leq\varphi_{\epsilon}\left(  x\right)
\leq\left\langle \nabla\varphi_{\epsilon}\left(  x\right)  ,x\right\rangle .
\end{array}
\label{ineq Yosida 2}%
\end{equation}

\begin{remark}
Under assumption $\mathrm{(H}_{1}\mathrm{)}$ the subdifferential operator
$\partial\varphi$ becomes a maximal monotone ope{\footnotesize \-}%
ra{\footnotesize \-}tor, i.e. maximal in the class of operators which satisfy
the condition%
\[
\left\langle y^{\ast}-z^{\ast},y-z\right\rangle \geq0~,\;\forall\left(
y,y^{\ast}\right)  ,\left(  z,z^{\ast}\right)  \in\partial\varphi.
\]
Conversely (only in the case $d=1$) we recall that, if $A$ is a given maximal
monotone operator on $\mathbb{R}$, then there exists a proper l.s.c. convex
function $\psi$ such that $A=\partial\psi.$
\end{remark}

\begin{itemize}
\item[$\mathrm{(H}_{2}\mathrm{)}$] The functions $b:\left[  0,T\right]
\times\mathbb{R}^{3d}\rightarrow\mathbb{R}^{d}$ and $\sigma:\left[
0,T\right]  \times\mathbb{R}^{3d}\rightarrow\mathbb{R}^{d\times n}$ are
continuous and there exist $\ell,\kappa>0$ such that for all $t\in\lbrack0,T]$
and $x,y,z,x^{\prime},y^{\prime},z^{\prime}\in\mathbb{R}^{d}$,%
\begin{equation}%
\begin{array}
[c]{l}%
\left\vert b\left(  t,x,y,z\right)  -b(t,x^{\prime},y^{\prime},z^{\prime
})\right\vert +\left\vert \sigma\left(  t,x,y,z\right)  -\sigma(t,x^{\prime
},y^{\prime},z^{\prime})\right\vert \leq\ell\left(  |x-x^{\prime
}|+|y-y^{\prime}|+|z-z^{\prime}|\right)  ,\medskip\\
\left\vert b\left(  t,0,0,0\right)  \right\vert +\left\vert \sigma\left(
t,0,0,0\right)  \right\vert \leq\kappa.
\end{array}
\label{differ of b, sigma}%
\end{equation}

\item[$\mathrm{(H}_{3}\mathrm{)}$] The initial path $\xi$ is $\mathcal{F}%
_{s}^{s}$-measurable and%
\begin{equation}
\xi\in L^{2}\big(\Omega;\mathcal{C}\big(\left[  -\delta,0\right]
;\overline{\mathrm{Dom}\left(  \varphi\right)  }\,\big)\big)\quad
\text{and}\quad\varphi\left(  \xi\left(  0\right)  \right)  \in L^{1}\left(
\Omega;\mathbb{R}\right)  \,. \label{assumpt on xi}%
\end{equation}

\end{itemize}

\begin{definition}
\label{def sol of delay SVI 1}A pair of progressively measurable continuous
stochastic processes $\left(  X,K\right)  :\Omega\times\lbrack s-\delta
,T]\rightarrow\mathbb{R}^{2d}$ is a solution of (\ref{SVI delayed 2}) if%
\begin{equation}%
\begin{array}
[c]{rl}%
\left(  i\right)  & X\in L_{ad}^{2}\left(  \Omega;C\left(  \left[
s-\delta,T\right]  ;\mathbb{R}^{d}\right)  \right)  ,\medskip\\
\left(  ii\right)  & X\left(  t\right)  \in\overline{\mathrm{Dom}\left(
\varphi\right)  },\;\text{a.e. }t\in\left[  s-\delta,T\right]  ,\;\mathbb{P}%
\text{-a.s. and }\varphi\left(  X\right)  \in L^{1}\left(  \Omega\times\left[
s-\delta,T\right]  ;\mathbb{R}\right)  ,\medskip\\
\left(  iii\right)  & K\in L_{ad}^{2}\left(  \Omega;C\left(  \left[
s,T\right]  ;\mathbb{R}^{d}\right)  \right)  \cap L^{1}\left(  \Omega
;\mathrm{BV}\left(  \left[  s,T\right]  ;\mathbb{R}^{d}\right)  \right)
\text{ with }K\left(  s\right)  =0,\;\mathbb{P}\text{-a.s.},\medskip\\
\left(  iv\right)  & X\left(  t\right)  +K\left(  t\right)  =X\left(
s\right)  +\displaystyle\int_{s}^{t}b\left(  r,X\left(  r\right)  ,Y\left(
r\right)  ,Z\left(  r\right)  \right)  dr\medskip\\
& \quad\quad\quad\quad\quad\quad\quad\displaystyle+\int_{s}^{t}\sigma\left(
r,X\left(  r\right)  ,Y\left(  r\right)  ,Z\left(  r\right)  \right)
dW\left(  r\right)  ,\;\forall t\in(s,T],\;\mathbb{P}\text{-a.s.}\medskip\\
\left(  v\right)  & X\left(  t\right)  =\xi\left(  t-s\right)  ,\;\forall
t\in\lbrack s-\delta,s]\medskip\\
\left(  vi\right)  & \displaystyle\int_{t}^{\hat{t}}\langle u-X\left(
r\right)  ,dK\left(  r\right)  \rangle+\int_{t}^{\hat{t}}\varphi(X\left(
r\right)  )dr\leq(\hat{t}-t)\varphi(u),\;\forall u\,\in\mathbb{R}%
^{d},\;\forall~0\leq t\leq\hat{t}\leq T,\\
& \quad\quad\quad\quad\quad\quad\quad\quad\quad\quad\quad\quad\quad\quad
\quad\quad\quad\quad\quad\quad\quad\quad\quad\quad\quad\quad\quad\quad
\quad\quad\quad\quad\quad\mathbb{P}\text{-a.s.}%
\end{array}
\label{def sol of delay SVI 2}%
\end{equation}

\end{definition}

\begin{remark}
In the following we shall write $dK\left(  t\right)  \in\partial\varphi\left(
Y\left(  t\right)  \right)  dt$, $\mathbb{P}$-a.e. instead of inequality
$\left(  vi\right)  $ (see also the bellow result). Now, taking the processes
$X,X^{\prime},K,K^{\prime}$ such that $dK(t)\in\partial\varphi(X(t))dt$ and
$dK^{\prime}(t)\in\partial\varphi(X^{\prime}\left(  t\right)  )dt$, we see
that%
\begin{equation}
\int_{t}^{\hat{t}}\langle X(r)-X^{\prime}(r),dK(r)-dK^{\prime}(r)\rangle
\geq0,\;\forall~0\leq t\leq\hat{t}\leq T. \label{differ of 2 sol}%
\end{equation}

\end{remark}

Following Proposition 1.2 from \cite{as-ra/97} we can give some equivalent
inequalities with $\left(  vi\right)  :$

\begin{proposition}
If $\varphi:\mathbb{R}^{d}\rightarrow(-\infty,+\infty]$ is a convex and l.s.c.
function and $x\in C\left(  \left[  s,T\right]  ;\mathbb{R}^{d}\right)  $ and
$\eta\in C\left(  \left[  s,T\right]  ;\mathbb{R}^{d}\right)  \cap
\mathrm{BV}\left(  \left[  s,T\right]  ;\mathbb{R}^{d}\right)  .$ The
following assertions are equivalent with the inequality
(\ref{def sol of delay SVI 2}-$vi$):%
\[%
\begin{array}
[c]{rl}%
\left(  vi\right)  ^{\prime} & \displaystyle\int_{t}^{\hat{t}}\langle y\left(
r\right)  -x\left(  r\right)  ,d\eta\left(  r\right)  \rangle+\int_{t}%
^{\hat{t}}\varphi(x\left(  r\right)  )dr\leq\int_{t}^{\hat{t}}\varphi(y\left(
r\right)  )dr,\medskip\\
& \quad\quad\quad\quad\quad\quad\quad\quad\quad\quad\quad\quad\quad\quad
\quad\quad\quad\quad\quad\quad\quad\quad\quad\forall y\in C\left(  \left[
s,T\right]  ;\mathbb{R}^{d}\right)  ,\,\forall~0\leq t\leq\hat{t}\leq
T;\medskip\\
\left(  vi\right)  ^{\prime\prime} & \displaystyle\int_{t}^{\hat{t}}\langle
x\left(  r\right)  -z,d\eta\left(  r\right)  -z^{\ast}\rangle\geq
0,\;\forall\left(  x,x^{\ast}\right)  \in\partial\varphi,\,\forall~0\leq
t\leq\hat{t}\leq T;\medskip\\
\left(  vi\right)  ^{\prime\prime\prime} & \displaystyle\int_{t}^{\hat{t}%
}\langle x\left(  r\right)  -y\left(  r\right)  ,d\eta\left(  r\right)
-y^{\ast}\left(  r\right)  \rangle\geq0,\medskip\\
& \quad\quad\quad\quad\quad\quad\quad\forall y,y^{\ast}\in C\left(  \left[
s,T\right]  ;\mathbb{R}^{d}\right)  ,\;\left(  y\left(  r\right)  ,y^{\ast
}\left(  r\right)  \right)  \in\partial\varphi,\,\forall r\in\left[
-\delta,T\right]  ,\;\forall~0\leq t\leq\hat{t}\leq T;\medskip\\
\left(  vi\right)  ^{\prime\prime\prime\prime} & \displaystyle\int_{0}%
^{T}\langle y\left(  r\right)  -x\left(  r\right)  ,d\eta\left(  r\right)
\rangle+\int_{0}^{T}\varphi(x\left(  r\right)  )dr\leq\int_{0}^{T}%
\varphi(y\left(  r\right)  )dr,\;\forall y\in C\left(  \left[  s,T\right]
;\mathbb{R}^{d}\right)  .
\end{array}
\]

\end{proposition}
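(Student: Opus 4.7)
The plan is to close the circle of implications $(vi)\Rightarrow(vi)'\Rightarrow(vi)''\Rightarrow(vi)$, together with the side equivalences $(vi)''\Leftrightarrow(vi)'''$ and $(vi)'\Leftrightarrow(vi)''''$, using only three techniques: approximation by piecewise constant test functions, the elementary substitution of a constant in place of a continuous path, and the Moreau--Yosida regularization (\ref{Yosida regular}) to pass from $\mathrm{Dom}(\partial\varphi)$ to $\mathrm{Dom}(\varphi)$.

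For $(vi)\Rightarrow(vi)'$, fix $y\in C([s,T];\mathbb{R}^d)$ and a partition $t=t_0<t_1<\cdots<t_N=\hat t$; apply $(vi)$ on each $[t_i,t_{i+1}]$ with the constant test point $u=y(t_i)$ and sum. Uniform continuity of $y$ together with $\eta\in\mathrm{BV}$ makes the Stieltjes sum converge to $\int_t^{\hat t}\langle y(r)-x(r),d\eta(r)\rangle$, while the Riemann sum $\sum_i (t_{i+1}-t_i)\varphi(y(t_i))$ tends to $\int_t^{\hat t}\varphi(y(r))dr$ (the conclusion being vacuous if that integral is $+\infty$). The converse $(vi)'\Rightarrow(vi)$ is immediate by the substitution $y(r)\equiv u$. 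To go from $(vi)'$ to $(vi)''$, plug $y(r)\equiv z$ into $(vi)'$ and combine with the subdifferential bound $\varphi(z)\le\varphi(x(r))+\langle z^\ast,z-x(r)\rangle$ integrated over $[t,\hat t]$: the two $\varphi$-integrals cancel and $(vi)''$ emerges. The same partition argument, now applied with constant pairs $(y(t_i),y^\ast(t_i))\in\partial\varphi$ on each subinterval, yields $(vi)''\Rightarrow(vi)'''$; the converse $(vi)'''\Rightarrow(vi)''$ is the special case of constant selections.

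The main obstacle is the reverse implication $(vi)''\Rightarrow(vi)$, because $(vi)''$ carries no $\varphi$-information whatsoever. First I treat $u\in\mathrm{Dom}(\partial\varphi)$: fix $u^\ast\in\partial\varphi(u)$, apply $(vi)''$ with the constant pair $(z,z^\ast)=(u,u^\ast)$, and combine the result with the subdifferential inequality $\langle u^\ast,u-x(r)\rangle\le\varphi(u)-\varphi(x(r))$ integrated over $[t,\hat t]$; this recovers exactly $(vi)$. For a general $u\in\mathrm{Dom}(\varphi)$ I approximate by the proximal points $u_n:=J_{1/n}(u)\in\mathrm{Dom}(\partial\varphi)$: from (\ref{ineq Yosida})(ii) and the definition of the Moreau envelope one has $|u_n-u|^2\le(2/n)\varphi(u)$ and $\varphi(u_n)\le\varphi(u)$, while lower semicontinuity of $\varphi$ gives $\liminf_n\varphi(u_n)\ge\varphi(u)$, hence $\varphi(u_n)\to\varphi(u)$. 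Writing $(vi)$ at $u_n$ and passing to the limit (with $\eta\in\mathrm{BV}$ controlling the Stieltjes integral) yields $(vi)$ at $u$; for $u\notin\mathrm{Dom}(\varphi)$ the conclusion is trivial. Assumption $(\mathrm{H}_1)$, which forces $0\in\mathrm{Int}(\mathrm{Dom}(\varphi))$, guarantees $\mathrm{Dom}(\partial\varphi)\ne\emptyset$ and makes this argument available.

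Finally, $(vi)'\Rightarrow(vi)''''$ is the special case $t=0$, $\hat t=T$. For the converse, given $0\le t\le\hat t\le T$ and a continuous $\tilde y$, build $y_\varepsilon\in C([0,T];\mathbb{R}^d)$ coinciding with $x$ outside $[t-\varepsilon,\hat t+\varepsilon]$, with $\tilde y$ on $[t,\hat t]$, and linearly interpolating on the two transition zones; inserting $y_\varepsilon$ into $(vi)''''$ and letting $\varepsilon\downarrow 0$, the boundary contributions vanish thanks to uniform continuity and $\eta\in\mathrm{BV}$. This closes all the equivalences.
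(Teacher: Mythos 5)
Your cycle of implications breaks at $(vi)''\Rightarrow(vi)$, and the failure is in the inequality you invoke there. For $u^{\ast}\in\partial\varphi(u)$ the subdifferential inequality reads $\varphi(w)\geq\varphi(u)+\langle u^{\ast},w-u\rangle$ for all $w$; taking $w=x(r)$ gives
\[
\langle u^{\ast},u-x(r)\rangle\;\geq\;\varphi(u)-\varphi(x(r)),
\]
which is the \emph{reverse} of the inequality $\langle u^{\ast},u-x(r)\rangle\leq\varphi(u)-\varphi(x(r))$ that you integrate (your version would require $u^{\ast}\in\partial\varphi(x(r))$, which is not available). With the correct sign, $(vi)''$ applied to the constant pair $(u,u^{\ast})$ yields $\int_t^{\hat t}\langle u-x(r),d\eta(r)\rangle\leq\int_t^{\hat t}\langle u-x(r),u^{\ast}\rangle dr$, while the subdifferential inequality gives a \emph{lower} bound on that same right-hand side, so nothing chains and $(vi)$ does not follow. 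This is not a repairable slip: $(vi)''$ contains no values of $\varphi$ whatsoever, so pairing it against one graph element can never reproduce the $\varphi$-terms of $(vi)$; in particular your argument never shows $x(r)\in\overline{\mathrm{Dom}\left(\varphi\right)}$ nor $\int_t^{\hat t}\varphi(x(r))dr<+\infty$, both of which $(vi)$ implicitly asserts. This direction is exactly the maximality content of the proposition — it is the reason the paper does not reprove it but defers to Proposition 1.2 of Asiminoaei--R\u{a}\c{s}canu \cite{as-ra/97} — and a correct proof needs a genuinely different mechanism: for instance, differentiating the measure $d\eta$ with respect to $|d\eta|+dr$ to obtain a pointwise relation, applying maximal monotonicity of $\partial\varphi$ on the absolutely continuous part and a normal-cone argument for the singular part; or a Yosida regularization of the whole inequality, not merely of the test point $u$ (the only place you use $\varphi_{\epsilon}$, namely to pass from $\mathrm{Dom}(\partial\varphi)$ to $\mathrm{Dom}(\varphi)$, is the part of the argument that was already fine).

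The remaining directions are essentially right but two of them need repairs. In $(vi)\Rightarrow(vi)'$, Riemann sums $\sum_i(t_{i+1}-t_i)\varphi(y(t_i))$ of the l.s.c. integrand $\varphi\circ y$ need \emph{not} converge to $\int_t^{\hat t}\varphi(y(r))dr$: a left endpoint can land where $\varphi=+\infty$ even though the integral is finite (take $d=1$, $\varphi(v)=v^{-1/2}$ for $v>0$, $+\infty$ otherwise, $y(r)=r$). The fix is to choose sample points $\xi_i\in[t_i,t_{i+1}]$ with $\varphi(y(\xi_i))\leq\frac{1}{t_{i+1}-t_i}\int_{t_i}^{t_{i+1}}\varphi(y(r))dr$, which exist by a mean-value argument, and apply $(vi)$ with $u=y(\xi_i)$. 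Similarly, in $(vi)''''\Rightarrow(vi)'$ the linear transition pieces are controlled by convexity only through $\max\big(\varphi(x(\cdot)),\varphi(\tilde y(\cdot))\big)$ at the gluing points; these values may be infinite, or blow up as $\varepsilon\downarrow0$, so you must glue at points chosen in the full-measure set where both are finite and check that (length of zone)$\times$(value at gluing points) tends to zero. Both of these are fixable technicalities; the reversed subdifferential inequality in $(vi)''\Rightarrow(vi)$ is the genuine gap.
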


In all that follows, $C$ denotes a constant, which may depend only on
$\ell,\kappa,\delta$ and $T$, which may vary from line to line.$\smallskip$

The next result provides some a priori estimates of the solution. Write
$\left\Vert \xi\right\Vert _{\mathcal{C}}=\left\Vert \xi\right\Vert
_{\mathcal{C}([-\delta,0];\mathbb{R}^{d})}$.

\begin{proposition}
\label{a priori estimates}We suppose that assumptions $\mathrm{(H}%
_{1}-\mathrm{H}_{3}\mathrm{)}$ are satisfied. Let $\left(  X,K\right)  $ be a
solution of equation (\ref{SVI delayed 2}). Then there exists a constant
$C=C\left(  \ell,\kappa,\delta,T\right)  >0$ such that%
\[
\mathbb{E}\sup_{r\in\left[  s,T\right]  }\left\vert X\left(  r\right)
\right\vert ^{2}\leq C\,\big(1+\mathbb{E}\left\Vert \xi\right\Vert
_{\mathcal{C}}^{2}\big).
\]
In addition%
\[
\mathbb{E}\sup_{r\in\left[  s,T\right]  }\left\vert Y\left(  r\right)
\right\vert ^{2}+\mathbb{E}\int_{s}^{t}\left\vert Z\left(  r\right)
\right\vert ^{2}dr\leq C\,\big(1+\mathbb{E}\left\Vert \xi\right\Vert
_{\mathcal{C}}^{2}\big).
\]

\end{proposition}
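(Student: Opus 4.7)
The plan is to apply It\^o's formula to $\left\vert X(t)\right\vert ^{2}$ and use the convex inequality $\left(vi\right)$ from Definition \ref{def sol of delay SVI 1} with the test point $u=0$ to absorb the $dK$ contribution. By assumption $\mathrm{(H}_{1}\mathrm{)}$ we have $0\in\mathrm{Int}(\mathrm{Dom}(\varphi))$ and $\varphi(0)=0$, so plugging $u=0$ into $\left(vi\right)$ yields
\[
\int_{s}^{t}\langle X(r),dK(r)\rangle \geq \int_{s}^{t}\varphi(X(r))\,dr\geq 0,
\]
since $\varphi\geq 0$. Together with It\^o's formula applied to equation $\left(iv\right)$ this gives, for any $t\in [s,T]$,
\[
\left\vert X(t)\right\vert ^{2}\leq \left\vert \xi(0)\right\vert ^{2}+2\int_{s}^{t}\langle X(r),b(r,X,Y,Z)\rangle dr+\int_{s}^{t}\left\vert \sigma(r,X,Y,Z)\right\vert ^{2}dr+2\int_{s}^{t}\langle X(r),\sigma\,dW(r)\rangle.
\]

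Next I would estimate the delay terms. From Cauchy--Schwarz,
\[
\left\vert Y(r)\right\vert ^{2}\leq \Big(\int_{-\delta}^{0}e^{2\lambda u}du\Big)\int_{-\delta}^{0}\left\vert X(r+u)\right\vert ^{2}du \leq C\,\Big(\left\Vert \xi\right\Vert _{\mathcal{C}}^{2}+\int_{s}^{r}\left\vert X(u)\right\vert ^{2}du\Big),
\]
and by the change of variable $u=r-\delta$,
\[
\int_{s}^{t}\left\vert Z(r)\right\vert ^{2}dr=\int_{s-\delta}^{t-\delta}\left\vert X(u)\right\vert ^{2}du \leq \delta\left\Vert \xi\right\Vert _{\mathcal{C}}^{2}+\int_{s}^{t}\left\vert X(u)\right\vert ^{2}du.
\]
Combining these with the Lipschitz/linear growth bounds from $\mathrm{(H}_{2}\mathrm{)}$, $|b|+|\sigma|\leq \kappa+\ell(|X|+|Y|+|Z|)$, and using $2|X||b|\leq |X|^{2}+|b|^{2}$, one obtains a pointwise inequality of the form
\[
\left\vert X(t)\right\vert ^{2}\leq \left\vert \xi(0)\right\vert ^{2}+C\,\big(1+\left\Vert \xi\right\Vert _{\mathcal{C}}^{2}\big)(t-s)+C\int_{s}^{t}\sup_{u\in [s,r]}\left\vert X(u)\right\vert ^{2}dr+M(t),
\]
where $M(t)=2\int_{s}^{t}\langle X(r),\sigma\,dW(r)\rangle$ is a local martingale.

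Then I would take supremum in $t$ and apply the Burkholder--Davis--Gundy inequality to the martingale term, absorbing the resulting $\tfrac{1}{2}\mathbb{E}\sup_{r\in [s,T]}|X(r)|^{2}$ into the left-hand side. This reduces the problem to a Gronwall inequality of the form
\[
g(t)\leq C\big(1+\mathbb{E}\left\Vert \xi\right\Vert _{\mathcal{C}}^{2}\big)+C\int_{s}^{t}g(r)\,dr,\qquad g(t):=\mathbb{E}\sup_{u\in [s,t]}\left\vert X(u)\right\vert ^{2},
\]
whose solution yields the first estimate. The second estimate then follows immediately from the bounds on $|Y|^{2}$ and $\int_{s}^{t}|Z|^{2}dr$ derived above. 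The main technical obstacle is verifying that $M(t)$ is a genuine martingale (not merely a local one) so that BDG applies; this is handled by a standard localisation argument using the integrability $X\in L^{2}_{ad}(\Omega;\mathcal{C})$ from Definition \ref{def sol of delay SVI 1}(i) together with the linear growth of $\sigma$, and passing to the limit via Fatou once the localised estimate is established.
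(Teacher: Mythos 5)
Your proof is correct and follows essentially the same route as the paper's: It\^o's formula for $|X|^{2}$, elimination of the $dK$ term via the convexity/monotonicity of $\varphi$ at $0$ (your use of $(vi)$ with $u=0$ is equivalent to the paper's use of $0\in\partial\varphi(0)$ together with (\ref{differ of 2 sol})), the same Cauchy--Schwarz and change-of-variable bounds for $Y$ and $Z$, then a BDG-type estimate on the stochastic integral, absorption of $\tfrac12\mathbb{E}\sup|X|^{2}$, and Gronwall. Your explicit localisation/Fatou remark for the martingale term is a point the paper leaves implicit, but it does not change the argument.
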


\begin{proof}
Applying It\^{o}'s formula and using (\ref{differ of b, sigma}) and
(\ref{differ of 2 sol}) we obtain%
\begin{equation}%
\begin{array}
[c]{l}%
\left\vert X\left(  t\right)  \right\vert ^{2}\leq\left\vert \xi\left(
0\right)  \right\vert ^{2}+\left(  1+12\ell^{2}\right)  \displaystyle\int
_{s}^{t}\left\vert X\left(  r\right)  \right\vert ^{2}dr+12\ell^{2}\int
_{s}^{t}\left\vert Y\left(  r\right)  \right\vert ^{2}dr+12\ell^{2}\int
_{s}^{t}\left\vert Z\left(  r\right)  \right\vert ^{2}dr\medskip\\
\quad\displaystyle+4\kappa^{2}\left(  t-s\right)  +2\int_{s}^{t}\left\langle
X(r),\sigma\left(  r,X\left(  r\right)  ,Y\left(  r\right)  ,Z\left(
r\right)  \right)  \right\rangle dW\left(  r\right)  ,
\end{array}
\label{Ito for a priori}%
\end{equation}
since, from $\mathrm{(H}_{1}\mathrm{)}$, $0\in\partial\varphi\left(  0\right)
$.

From definition (\ref{def of Y,Z}) we have%
\begin{equation}%
\begin{array}
[c]{l}%
\displaystyle\left\vert Y\left(  r\right)  \right\vert ^{2}=\left\vert
\int_{-\delta}^{0}e^{\lambda u}X\left(  r+u\right)  du\right\vert ^{2}%
\leq\delta\int_{-\delta}^{0}\left\vert X\left(  r+u\right)  \right\vert
^{2}du=\delta\int_{r-\delta}^{r}\left\vert X\left(  u\right)  \right\vert
^{2}du\medskip\\
\displaystyle\leq\delta\left(  \int_{s-\delta}^{s}\left\vert \xi\left(
u-s\right)  \right\vert ^{2}du+\int_{s}^{r}\left\vert X\left(  u\right)
\right\vert ^{2}du\right)  =\delta\left(  \int_{-\delta}^{0}\left\vert
\xi\left(  u\right)  \right\vert ^{2}du+\int_{s}^{r}\left\vert X\left(
u\right)  \right\vert ^{2}du\right)
\end{array}
\label{calc of Y}%
\end{equation}
and%
\begin{equation}%
\begin{array}
[c]{l}%
\displaystyle\int_{s}^{t}\left\vert Z\left(  r\right)  \right\vert ^{2}%
dr=\int_{s}^{t}\left\vert X\left(  r-\delta\right)  \right\vert ^{2}%
dr=\int_{s-\delta}^{t-\delta}\left\vert X\left(  r\right)  \right\vert
^{2}dr\leq\int_{s-\delta}^{s}\left\vert \xi\left(  r-s\right)  \right\vert
^{2}dr+\int_{s}^{t}\left\vert X\left(  r\right)  \right\vert ^{2}dr\medskip\\
\displaystyle=\int_{-\delta}^{0}\left\vert \xi\left(  r\right)  \right\vert
^{2}dr+\int_{s}^{t}\left\vert X\left(  r\right)  \right\vert ^{2}dr
\end{array}
\label{calc of Z}%
\end{equation}
Hence (\ref{Ito for a priori}) becomes%
\begin{equation}%
\begin{array}
[c]{l}%
\left\vert X\left(  t\right)  \right\vert ^{2}\leq\left\vert \xi\left(
0\right)  \right\vert ^{2}+\left(  1+24\ell^{2}+12\ell^{2}\delta T\right)
\displaystyle\int_{s}^{t}\left\vert X\left(  r\right)  \right\vert
^{2}dr+12\ell^{2}\left(  1+\delta T\right)  \int_{-\delta}^{0}\left\vert
\xi\left(  u\right)  \right\vert ^{2}du\medskip\\
\quad\displaystyle+4\kappa^{2}T+2\int_{s}^{t}\left\langle X(r),\sigma\left(
r,X\left(  r\right)  ,Y\left(  r\right)  ,Z\left(  r\right)  \right)
\right\rangle dW\left(  r\right)  .
\end{array}
\label{Ito for a priori 2}%
\end{equation}
From Doob's inequality and (\ref{calc of Y}-\ref{calc of Z}), we deduce that%
\[%
\begin{array}
[c]{l}%
2\mathbb{E}\sup_{r\in\left[  s,t\right]  }\displaystyle\left\vert \int_{s}%
^{r}\left\langle X(u),\sigma\left(  u,X\left(  u\right)  ,Y\left(  u\right)
,Z\left(  u\right)  \right)  \right\rangle dW\left(  u\right)  \right\vert
\medskip\\
\leq6\mathbb{E}\displaystyle\left[  \int_{s}^{t}\left\vert \left\langle
X(u),\sigma\left(  u,X\left(  u\right)  ,Y\left(  u\right)  ,Z\left(
u\right)  \right)  \right\rangle \right\vert ^{2}du\right]  ^{1/2}\medskip\\
\leq6\mathbb{E}\displaystyle\left[  6\ell^{2}\int_{s}^{t}\left\vert
X(u)\right\vert ^{2}\left(  \left\vert X\left(  u\right)  \right\vert
^{2}+\left\vert Y\left(  u\right)  \right\vert ^{2}+\left\vert Z\left(
u\right)  \right\vert ^{2}\right)  du\right]  ^{1/2}+6\mathbb{E}\left[
2\kappa^{2}\int_{s}^{t}\left\vert X(u)\right\vert ^{2}du\right]
^{1/2}\medskip\\
\leq6\sqrt{6}\ell\mathbb{E}\displaystyle\left[  \sup_{r\in\left[  s,t\right]
}\left\vert X\left(  r\right)  \right\vert \left(  \int_{s}^{t}\left\vert
X\left(  r\right)  \right\vert ^{2}dr\right)  ^{1/2}\right]  +6\sqrt{6}%
\ell\mathbb{E}\left[  \int_{s}^{t}\left\vert X\left(  r\right)  \right\vert
^{2}\left\vert Y\left(  r\right)  \right\vert ^{2}dr\right]  ^{1/2}\medskip\\
\quad+6\sqrt{6}\ell\mathbb{E}\displaystyle\left[  \int_{s}^{t}\left\vert
X\left(  r\right)  \right\vert ^{2}\left\vert Z\left(  r\right)  \right\vert
^{2}dr\right]  ^{1/2}+6\sqrt{2}\kappa\mathbb{E}\left[  \int_{s}^{t}\left\vert
X(u)\right\vert ^{2}du\right]  ^{1/2}%
\end{array}
\]
which implies, using Young's inequality$^{\ast}$\footnotetext[1]%
{$\;${\scriptsize $ab\leq\frac{a^{p}}{p}+\frac{b^{q}}{q},\;\forall a,b>0$ and
$\forall p,q>0$ such that $\frac{1}{p}+\frac{1}{q}=1.$}}, that%
\[%
\begin{array}
[c]{l}%
2\mathbb{E}\sup_{r\in\left[  s,t\right]  }\displaystyle\left\vert \int_{s}%
^{r}\left\langle X(u),\sigma\left(  u,X\left(  u\right)  ,Y\left(  u\right)
,Z\left(  u\right)  \right)  \right\rangle dW\left(  u\right)  \right\vert
\medskip\\
\leq\displaystyle\frac{1}{4}\mathbb{E}\sup_{r\in\left[  s,t\right]
}\left\vert X\left(  r\right)  \right\vert ^{2}+\left(  216\ell^{2}+3\sqrt
{2}\kappa\right)  \mathbb{E}\int_{s}^{t}\left\vert X\left(  r\right)
\right\vert ^{2}dr+6\sqrt{6\delta}\ell\mathbb{E}\left[  \int_{s}^{t}\left\vert
X\left(  r\right)  \right\vert ^{2}dr\cdot\int_{-\delta}^{0}\left\vert
\xi\left(  u\right)  \right\vert ^{2}du\right]  ^{1/2}\medskip\\
\quad\displaystyle+6\sqrt{6\delta}\ell\mathbb{E}\left[  \int_{s}^{t}\left\vert
X\left(  r\right)  \right\vert ^{2}\left(  \int_{s}^{r}\left\vert X\left(
u\right)  \right\vert ^{2}du\right)  dr\right]  ^{1/2}+6\sqrt{6}\ell
\mathbb{E}\left[  \int_{s}^{t}\left\vert X\left(  r\right)  \right\vert
^{2}\left\vert X\left(  r-\delta\right)  \right\vert ^{2}dr\right]
^{1/2}+3\sqrt{2}\kappa\medskip\\
\leq\displaystyle\frac{1}{4}\mathbb{E}\sup_{r\in\left[  s,t\right]
}\left\vert X\left(  r\right)  \right\vert ^{2}+C_{1}\mathbb{E}\int_{s}%
^{t}\left\vert X\left(  r\right)  \right\vert ^{2}dr+C_{2}\mathbb{E}%
\int_{-\delta}^{0}\left\vert \xi\left(  u\right)  \right\vert ^{2}du\medskip\\
\quad\displaystyle+6\sqrt{6}\ell\mathbb{E}\left[  \sup_{r\in\left[
s,t\right]  }\left\vert X\left(  r\right)  \right\vert \left(  \int_{s-\delta
}^{t-\delta}\left\vert X\left(  r\right)  \right\vert ^{2}dr\right)
^{1/2}\right]  +3\sqrt{2}\kappa\medskip\\
\leq\displaystyle\frac{1}{2}\mathbb{E}\sup_{r\in\left[  s,t\right]
}\left\vert X\left(  r\right)  \right\vert ^{2}+C_{3}\mathbb{E}\int_{s}%
^{t}\left\vert X\left(  r\right)  \right\vert ^{2}dr+C_{4}\mathbb{E}%
\int_{-\delta}^{0}\left\vert \xi\left(  u\right)  \right\vert ^{2}du+3\sqrt
{2}\kappa.
\end{array}
\]
Therefore, from (\ref{Ito for a priori 2}), we deduce that there exists
another constant $C>0$ such that%
\[
\mathbb{E}\sup_{r\in\left[  s,t\right]  }\left\vert X\left(  r\right)
\right\vert ^{2}\leq C\,\big(1+\mathbb{E}\left\Vert \xi\right\Vert
_{\mathcal{C}}^{2}\big)+C\int_{s}^{t}\mathbb{E}\sup_{u\in\left[  s,r\right]
}\left\vert X\left(  u\right)  \right\vert ^{2}dr
\]
and from Gronwall's inequality we obtain the conclusion.\hfill\medskip
\end{proof}

The next result emphasize the continuous dependence of the solution $(X,K)$
with respect to the initial values $\left(  s,\xi\right)  $. Obviously, the
uniqueness of the solution will be a immediate consequence.

\begin{proposition}
We suppose that assumptions $\mathrm{(H}_{1}-\mathrm{H}_{3}\mathrm{)}$ are
satisfied. If $\left(  X^{s,\xi},K^{s,\xi}\right)  $ and $(X^{s^{\prime}%
,\xi^{\prime}},K^{s^{\prime},\xi^{\prime}})$ are the solutions of
(\ref{SVI delayed 2}) corresponding to the initial data $\left(  s,\xi\right)
$ and $(s^{\prime},\xi^{\prime})$ respectively, then there exists $C=C\left(
\ell,\kappa,\delta,T\right)  >0$ such that%
\begin{equation}%
\begin{array}
[c]{r}%
\mathbb{E}\sup_{r\in\left[  s\wedge s^{\prime},t\right]  }|X^{s,\xi}\left(
r\right)  -X^{s^{\prime},\xi^{\prime}}\left(  r\right)  |^{2}+\mathbb{E}%
\sup_{r\in\left[  s\wedge s^{\prime},t\right]  }|K^{s,\xi}\left(  r\right)
-K^{s^{\prime},\xi^{\prime}}\left(  r\right)  |^{2}\medskip\\
\leq C\,\Big[\Gamma_{1}+\left\vert s-s^{\prime}\right\vert \big(1+\mathbb{E}%
\left\Vert \xi\right\Vert _{\mathcal{C}}^{2}+\mathbb{E}||\xi^{\prime
}||_{\mathcal{C}}^{2}\big)\Big],
\end{array}
\label{ineq uniq}%
\end{equation}
where%
\begin{equation}
\Gamma_{1}:=\mathbb{E}||\xi-\xi^{\prime}||_{\mathcal{C}}^{2}+\mathbb{E}%
\int_{s^{\prime}-\delta}^{s^{\prime}}\left\vert \xi^{\prime}\left(
r-s\right)  -\xi^{\prime}\left(  r-s^{\prime}\right)  \right\vert ^{2}dr.
\label{def gamma 1}%
\end{equation}

\end{proposition}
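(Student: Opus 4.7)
By the symmetry of the roles played by $(s,\xi)$ and $(s',\xi')$ we may assume $s\leq s'$. Write $\Delta X := X^{s,\xi}-X^{s',\xi'}$, $\Delta K := K^{s,\xi}-K^{s',\xi'}$, and similarly $\Delta Y,\Delta Z$ on the common interval $[s,t]$, extending $X^{s',\xi'}$ on $[s,s']$ by the initial data $\xi'(\cdot-s')$ where needed. The strategy is to split at $s'$, control the transition window $[s,s']$ by a priori estimates, run It\^o on $[s',t]$ using the monotonicity cancellation from \eqref{differ of 2 sol}, and close with Gronwall.

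\emph{Transition interval $[s,s']$.} Here $X^{s',\xi'}$ is frozen in the initial data while $X^{s,\xi}$ already diffuses. Applying Proposition~\ref{a priori estimates}, the Burkholder-Davis-Gundy inequality, the Lipschitz condition $\mathrm{(H}_{2}\mathrm{)}$, and the one-sided bound coming from $0\in\partial\varphi(0)$ (ensured by $\mathrm{(H}_{1}\mathrm{)}$), I would obtain $\mathbb{E}\sup_{r\in[s,s']}|X^{s,\xi}(r)-\xi(0)|^{2}\leq C|s-s'|(1+\mathbb{E}\|\xi\|_{\mathcal{C}}^{2})$. Decomposing $\Delta X(r)=(X^{s,\xi}(r)-\xi(0))+(\xi(0)-\xi'(0))+(\xi'(0)-\xi'(r-s'))$, the three pieces absorb into $|s-s'|(1+\mathbb{E}\|\xi\|_{\mathcal{C}}^{2}+\mathbb{E}\|\xi'\|_{\mathcal{C}}^{2})$, $\mathbb{E}\|\xi-\xi'\|_{\mathcal{C}}^{2}$, and the $L^{2}$-modulus-of-continuity term built into $\Gamma_{1}$, respectively.

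\emph{Main interval $[s',t]$.} Apply It\^o to $|\Delta X|^{2}$ on $[s',r]$. The monotonicity inequality \eqref{differ of 2 sol} gives $\int_{s'}^{r}\langle\Delta X,d\Delta K\rangle\geq 0$, eliminating the multivalued terms. Combining with $\mathrm{(H}_{2}\mathrm{)}$, BDG, and Young's inequality (absorbing $\tfrac12\mathbb{E}\sup|\Delta X|^{2}$ on the left exactly as in the proof of Proposition~\ref{a priori estimates}) yields
\[
\mathbb{E}\sup_{r\in[s',\tau]}|\Delta X(r)|^{2}\leq C\,\mathbb{E}|\Delta X(s')|^{2}+C\int_{s'}^{\tau}\mathbb{E}\big(|\Delta X|^{2}+|\Delta Y|^{2}+|\Delta Z|^{2}\big)\,du.
\]
The delay contributions are controlled via \eqref{def of Y,Z}: $|\Delta Y(r)|^{2}\leq\delta\int_{r-\delta}^{r}|\Delta X(u)|^{2}\,du$ and $\int_{s'}^{\tau}|\Delta Z(u)|^{2}\,du=\int_{s'-\delta}^{\tau-\delta}|\Delta X(u)|^{2}\,du$. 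Splitting each of these between the historical part on $[s-\delta,s']$, which produces $\mathbb{E}\|\xi-\xi'\|_{\mathcal{C}}^{2}$ and the $L^{2}$-shift $\int_{s'-\delta}^{s'}|\xi'(r-s)-\xi'(r-s')|^{2}dr$, and the dynamic part on $[s',\tau]$, then applying Gronwall's inequality, closes the bound on $\mathbb{E}\sup|\Delta X|^{2}$. For $\Delta K$, rewrite $\Delta K(r)=\Delta X(s')-\Delta X(r)+\int_{s'}^{r}(b-b')\,du+\int_{s'}^{r}(\sigma-\sigma')\,dW$ and feed the $\Delta X$ bound back through Lipschitz and BDG to get the same estimate for $\mathbb{E}\sup|\Delta K|^{2}$.

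\emph{Main obstacle.} The delicate bookkeeping is on the window $[s-\delta,s']$, where the two processes cannot be compared directly through their initial paths: on $[s,s']$ one is already diffusing and the other is still in the initial data, and on $[s'-\delta,s']$ the paths $\xi$ and $\xi'$ live on shifted argument windows. This time-shift mismatch of the histories is exactly what forces the $L^{2}$-shift term in $\Gamma_{1}$ together with the factor $|s-s'|\bigl(1+\mathbb{E}\|\xi\|_{\mathcal{C}}^{2}+\mathbb{E}\|\xi'\|_{\mathcal{C}}^{2}\bigr)$ on the right-hand side; once it is handled correctly, the remainder is a standard It\^o-Gronwall argument driven by the monotonicity cancellation of the $dK$ terms.
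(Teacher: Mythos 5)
Your main-interval argument (It\^o for $|\Delta X|^2$ started from the later initial time, cancellation of the two reflection terms via (\ref{differ of 2 sol}), splitting of $\Delta Y,\Delta Z$ between the history window and the dynamic window, Doob/Young, Gronwall, and recovery of the $K$-estimate from the integral equation) is exactly the paper's proof. The two points you treat as routine bookkeeping, however, are genuine gaps. First, the reduction ``by symmetry we may assume $s\le s'$'' is not available: the estimate is \emph{not} symmetric in the two data, because $\Gamma_1$ in (\ref{def gamma 1}) contains the $L^2$-shift modulus of $\xi'$ alone. The paper's convention is the opposite one, $s'\le s$, and it is forced: when the primed solution starts earlier, the two initial histories overlap on $[s-\delta,s']$, and inserting the intermediate value $\xi'(u-s)$ there produces precisely $\|\xi-\xi'\|_{\mathcal{C}}$ plus the shift term of $\xi'$, with all arguments of $\xi'$ in $[-\delta,0]$. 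Under your ordering the same computation produces the shift modulus of $\xi$ instead (a different statement), and $\Gamma_1$ as literally written would involve $\xi'$ evaluated at positive arguments, where it is not defined.

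Second, and more seriously, your transition-window step fails. You claim the piece $\xi'(0)-\xi'(r-s')$, $r\in[s,s']$, ``absorbs into the $L^2$-modulus-of-continuity term built into $\Gamma_1$''. It does not: its supremum over the window is a sup-norm modulus of continuity of the \emph{later}-starting path over a window of width $|s-s'|$, and nothing on the right-hand side of (\ref{ineq uniq}) dominates such a quantity. Concretely, take $b=\sigma=0$, $\varphi\equiv 0$, $s'=0$, $s=\epsilon_0$, and $\xi=\xi'$ a deterministic spike of height $A$ and width $\eta<\min(\epsilon_0,\delta)$ (so $\xi(0)=A$ and $\xi(v)=0$ for $v\le -\eta$). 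Then $X^{s',\xi'}(r)=A$ for $r\ge 0$ while $X^{s,\xi}(r)=\xi(r-\epsilon_0)=0$ for $r\in[0,\epsilon_0-\eta]$, so the supremum over $[s\wedge s',t]$ of $|\Delta X|^2$ is at least $A^2$, whereas $\Gamma_1\le \eta A^2$ and the remaining term is $C\epsilon_0(1+2A^2)$; for $\eta,\epsilon_0$ small and $A$ large the claimed inequality is violated. This is exactly why the paper never estimates the supremum over the transition window: its It\^o/Gronwall argument runs on $[s,t]$ only, and $[s',s]$ enters solely through time integrals such as $\int_{s'}^{s}|\Delta X(r)|^2dr$, which carry the small factor $|s-s'|$ and are bounded crudely by Proposition \ref{a priori estimates}. (So what the paper's proof actually establishes is the bound for $\sup_{r\in[s\vee s',t]}$; the ``$s\wedge s'$'' in the displayed statement is not supported by its proof either, and your attempt to supply that missing piece head-on cannot succeed in the stated form.) Relatedly, your auxiliary estimate $\mathbb{E}\sup_{r\in[s,s']}|X^{s,\xi}(r)-\xi(0)|^2\le C|s-s'|\big(1+\mathbb{E}\|\xi\|_{\mathcal{C}}^2\big)$ does not follow from $0\in\partial\varphi(0)$: to control the reflection term one must test (\ref{def sol of delay SVI 2}-$vi$) at $u=\xi(0)$, which brings in the additional term $|s-s'|\,\mathbb{E}\varphi(\xi(0))$, finite under $\mathrm{(H}_3\mathrm{)}$ but not dominated by $1+\mathbb{E}\|\xi\|_{\mathcal{C}}^2$.
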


\begin{proof}
For convenience we suppose that $s^{\prime}\leq s$.

Since $\left(  X^{s,\xi},K^{s,\xi}\right)  $ and $(X^{s^{\prime},\xi^{\prime}%
},K^{s^{\prime},\xi^{\prime}})$ are the solutions, $\forall t\in
(s,T],\;\mathbb{P}$-a.s.,%
\begin{equation}%
\begin{array}
[c]{r}%
X^{s^{\prime},\xi^{\prime}}\left(  t\right)  +K^{s^{\prime},\xi^{\prime}%
}\left(  t\right)  =X^{s^{\prime},\xi^{\prime}}\left(  s\right)
+\displaystyle\int_{s}^{t}b\big(r,X^{s^{\prime},\xi^{\prime}}\left(  r\right)
,Y^{s^{\prime},\xi^{\prime}}\left(  r\right)  ,Z^{s^{\prime},\xi^{\prime}%
}\left(  r\right)  \big)dr\medskip\\
\displaystyle+\int_{s}^{t}\sigma\big(r,X^{s^{\prime},\xi^{\prime}}\left(
r\right)  ,Y^{s^{\prime},\xi^{\prime}}\left(  r\right)  ,Z^{s^{\prime}%
,\xi^{\prime}}\left(  r\right)  \big)dW\left(  r\right)
\end{array}
\label{SVI delayed 3}%
\end{equation}
and%
\begin{equation}%
\begin{array}
[c]{r}%
X^{s,\xi}\left(  t\right)  +K^{s,\xi}\left(  t\right)  =X^{s,\xi}\left(
s\right)  +\displaystyle\int_{s}^{t}b\big(r,X^{s,\xi}\left(  r\right)
,Y^{s,\xi}\left(  r\right)  ,Z^{s,\xi}\left(  r\right)  \big)dr\medskip\\
\displaystyle+\int_{s}^{t}\sigma\big(r,X^{s,\xi}\left(  r\right)  ,Y^{s,\xi
}\left(  r\right)  ,Z^{s,\xi}\left(  r\right)  \big)dW\left(  r\right)  .
\end{array}
\label{SVI delayed 4}%
\end{equation}
Applying It\^{o}'s formula to $\big|X^{s,\xi}\left(  t\right)  -X^{s^{\prime
},\xi^{\prime}}\left(  t\right)  \big|^{2}$, we have%
\[%
\begin{array}
[c]{l}%
\left\vert \Delta X(t)\right\vert ^{2}+2\displaystyle\int_{s}^{t}%
\big\langle\Delta X(r),dK^{s,\xi}\left(  r\right)  -dK^{s^{\prime},\xi
^{\prime}}\left(  r\right)  \big\rangle=|\Delta X(s)|^{2}+2\int_{s}%
^{t}\left\langle \Delta X(r),b(r)-b^{\prime}(r)\right\rangle dr\medskip\\
\quad+\displaystyle\int_{s}^{t}\left\vert \sigma(r)-\sigma^{\prime
}(r)\right\vert ^{2}dr+2\int_{s}^{t}\left\langle \Delta X(r),\sigma
(r)-\sigma^{\prime}(r)\right\rangle dW\left(  r\right)  ,
\end{array}
\]
where%
\[%
\begin{array}
[c]{l}%
\Delta X(r)=X^{s,\xi}\left(  r\right)  -X^{s^{\prime},\xi^{\prime}}\left(
r\right)  ,\medskip\\
b\left(  r\right)  =b\left(  r,X^{s,\xi}\left(  r\right)  ,Y^{s,\xi}\left(
r\right)  ,Z^{s,\xi}\left(  r\right)  \right)  ,\quad b^{\prime}\left(
r\right)  =b\big(r,X^{s^{\prime},\xi^{\prime}}\left(  r\right)  ,Y^{s^{\prime
},\xi^{\prime}}\left(  r\right)  ,Z^{s^{\prime},\xi^{\prime}}\left(  r\right)
\big),\medskip\\
\sigma\left(  r\right)  =\sigma\left(  r,X^{s,\xi}\left(  r\right)  ,Y^{s,\xi
}\left(  r\right)  ,Z^{s,\xi}\left(  r\right)  \right)  ,\quad\sigma^{\prime
}\left(  r\right)  =\sigma\big(r,X^{s^{\prime},\xi^{\prime}}\left(  r\right)
,Y^{s^{\prime},\xi^{\prime}}\left(  r\right)  ,Z^{s^{\prime},\xi^{\prime}%
}\left(  r\right)  \big).
\end{array}
\]
Using (\ref{differ of b, sigma}) and (\ref{differ of 2 sol}) we see that%
\begin{equation}%
\begin{array}
[c]{l}%
\left\vert \Delta X\left(  t\right)  \right\vert ^{2}\leq|\Delta
X(s)|^{2}+\left(  1+6\ell^{2}\right)  \displaystyle\int_{s}^{t}\left\vert
\Delta X\left(  r\right)  \right\vert ^{2}dr+3\ell^{2}\displaystyle\int
_{s}^{t}\left\vert \Delta Y\left(  r\right)  \right\vert ^{2}dr\medskip\\
\quad+3\ell^{2}\displaystyle\int_{s}^{t}\left\vert \Delta Z\left(  r\right)
\right\vert ^{2}dr+2\int_{s}^{t}\left\langle \Delta X(r),\sigma(r)-\sigma
^{\prime}(r)\right\rangle dW\left(  r\right)  ,
\end{array}
\label{Ito for differ}%
\end{equation}
where%
\[
\Delta Y(r)=Y^{s,\xi}\left(  r\right)  -Y^{s^{\prime},\xi^{\prime}}\left(
r\right)  ,\quad\Delta Z(r)=Z^{s,\xi}\left(  r\right)  -Z^{s^{\prime}%
,\xi^{\prime}}\left(  r\right)  .
\]
Using definition (\ref{def of Y,Z}), we deduce, as in (\ref{calc of Y}) and
(\ref{calc of Z}), that%
\begin{equation}%
\begin{array}
[c]{l}%
\displaystyle\left\vert \Delta Y\left(  r\right)  \right\vert ^{2}\leq
\delta\int_{s-\delta}^{s}\left\vert \Delta X\left(  u\right)  \right\vert
^{2}du+\delta\int_{s}^{r}\left\vert \Delta X\left(  u\right)  \right\vert
^{2}du\medskip\\
\displaystyle\leq\delta\int_{s-\delta}^{\left(  s-\delta\right)  \vee
s^{\prime}}\left\vert \xi\left(  r-s\right)  -\xi^{\prime}\left(  r-s^{\prime
}\right)  \right\vert ^{2}dr+\delta\int_{s^{\prime}}^{s}\left\vert \Delta
X\left(  u\right)  \right\vert ^{2}du+\delta\int_{s}^{r}\left\vert \Delta
X\left(  u\right)  \right\vert ^{2}du
\end{array}
\label{differ of Y}%
\end{equation}
and%
\begin{equation}%
\begin{array}
[c]{l}%
\displaystyle\int_{s}^{t}\left\vert \Delta Z\left(  r\right)  \right\vert
^{2}dr\leq\int_{s-\delta}^{s}\left\vert \Delta X\left(  r\right)  \right\vert
^{2}dr+\int_{s}^{t}\left\vert \Delta X\left(  r\right)  \right\vert
^{2}dr\medskip\\
\displaystyle\leq\int_{s-\delta}^{\left(  s-\delta\right)  \vee s^{\prime}%
}\left\vert \xi\left(  r-s\right)  -\xi^{\prime}\left(  r-s^{\prime}\right)
\right\vert ^{2}dr+\int_{s^{\prime}}^{s}\left\vert \Delta X\left(  r\right)
\right\vert ^{2}dr+\int_{s}^{t}\left\vert \Delta X\left(  r\right)
\right\vert ^{2}dr.
\end{array}
\label{differ of Z}%
\end{equation}
Now, since%
\[%
\begin{array}
[c]{r}%
\Delta X\left(  t\right)  +K^{s,\xi}\left(  t\right)  -K^{s^{\prime}%
,\xi^{\prime}}\left(  t\right)  =\Big[\xi\left(  0\right)  -\xi^{\prime
}\left(  0\right)  -\displaystyle\int_{s^{\prime}}^{s}b^{\prime}%
(r)dr-\int_{s^{\prime}}^{s}\sigma^{\prime}(r)dW\left(  r\right)
\Big]\medskip\\
+\displaystyle\int_{s}^{t}\left[  b(r)-b^{\prime}(r)\right]  dr+\int_{s}%
^{t}\left[  \sigma(r)-\sigma^{\prime}(r)\right]  dW\left(  r\right)
,\;\forall t\in\left[  s,T\right]  ,
\end{array}
\]
the It\^{o}'s formula and (\ref{differ of 2 sol}) yields%
\[%
\begin{array}
[c]{l}%
\displaystyle|\Delta X(s)|^{2}\leq3|\xi\left(  0\right)  -\xi^{\prime}\left(
0\right)  |^{2}+3\left\vert \int_{s^{\prime}}^{s}b^{\prime}\left(  r\right)
dr\right\vert ^{2}+3\left\vert \int_{s^{\prime}}^{s}\sigma^{\prime}\left(
r\right)  dW\left(  r\right)  \right\vert ^{2}\medskip\\
\displaystyle\leq3|\xi\left(  0\right)  -\xi^{\prime}\left(  0\right)
|^{2}+6\left(  s-s^{\prime}\right)  \int_{s^{\prime}}^{s}\left\vert b^{\prime
}\left(  r\right)  -b\left(  r,0,0,0\right)  \right\vert ^{2}dr+6\left(
s-s^{\prime}\right)  ^{2}\kappa^{2}+3\left\vert \int_{s^{\prime}}^{s}%
\sigma^{\prime}\left(  r\right)  dW\left(  r\right)  \right\vert ^{2}%
\medskip\\
\displaystyle\leq3|\xi\left(  0\right)  -\xi^{\prime}\left(  0\right)
|^{2}+6\left(  s-s^{\prime}\right)  ^{2}\kappa^{2}+18\ell^{2}\left(
s-s^{\prime}\right)  \int_{s^{\prime}}^{s}\left(  |X^{s^{\prime},\xi^{\prime}%
}\left(  r\right)  |^{2}+|Y^{s^{\prime},\xi^{\prime}}\left(  r\right)
|^{2}+|Z^{s^{\prime},\xi^{\prime}}\left(  r\right)  |^{2}\right)  dr\medskip\\
\displaystyle\quad+3\left\vert \int_{\bar{s}}^{s}\sigma^{\prime}\left(
r\right)  dW\left(  r\right)  \right\vert ^{2}.
\end{array}
\]
Hence (\ref{Ito for differ}) becomes%
\begin{equation}%
\begin{array}
[c]{l}%
\displaystyle\left\vert \Delta X\left(  t\right)  \right\vert ^{2}\leq
\max\left(  3,6\ell^{2}\left(  1+\delta T\right)  \right)  \Gamma_{1}+6\left(
s-s^{\prime}\right)  ^{2}\kappa^{2}+3\left\vert \int_{s^{\prime}}^{s}%
\sigma^{\prime}\left(  r\right)  dW\left(  r\right)  \right\vert ^{2}%
\medskip\\
\displaystyle\quad+18\ell^{2}\left(  s-s^{\prime}\right)  \int_{s^{\prime}%
}^{s}\left(  |X^{s^{\prime},\xi^{\prime}}\left(  r\right)  |^{2}%
+|Y^{s^{\prime},\xi^{\prime}}\left(  r\right)  |^{2}+|Z^{s^{\prime}%
,\xi^{\prime}}\left(  r\right)  |^{2}\right)  dr\medskip\\
\displaystyle\quad+3\ell^{2}\left(  1+\delta T\right)  \int_{s^{\prime}}%
^{s}\left\vert \Delta X\left(  r\right)  \right\vert ^{2}dr+\left(
1+9\ell^{2}+3\ell^{2}\delta T\right)  \int_{s}^{t}\left\vert \Delta X\left(
r\right)  \right\vert ^{2}dr\medskip\\
\displaystyle\quad+2\int_{s}^{t}\left\langle \Delta X(r),\sigma(r)-\sigma
^{\prime}(r)\right\rangle dW\left(  r\right)  ,
\end{array}
\label{Ito for differ 2}%
\end{equation}
since%
\[%
\begin{array}
[c]{l}%
\displaystyle\int_{s-\delta}^{\left(  s-\delta\right)  \vee s^{\prime}%
}\left\vert \xi\left(  r-s\right)  -\xi^{\prime}\left(  r-s^{\prime}\right)
\right\vert ^{2}dr\medskip\\
\displaystyle\leq2\int_{s-\delta}^{s}\left\vert \xi\left(  r-s\right)
-\xi^{\prime}\left(  r-s\right)  \right\vert ^{2}dr+2\int_{s^{\prime}-\delta
}^{s^{\prime}}\left\vert \xi^{\prime}\left(  r-s\right)  -\xi^{\prime}\left(
r-s^{\prime}\right)  \right\vert ^{2}dr\medskip\\
\displaystyle\leq2\int_{-\delta}^{0}\left\vert \xi\left(  r\right)
-\xi^{\prime}\left(  r\right)  \right\vert ^{2}dr+2\int_{s^{\prime}-\delta
}^{s^{\prime}}\left\vert \xi^{\prime}\left(  r-s\right)  -\xi^{\prime}\left(
r-s^{\prime}\right)  \right\vert ^{2}dr.
\end{array}
\]
By Doob inequality, we deduce that%
\[%
\begin{array}
[c]{l}%
2\mathbb{E}\sup_{r\in\left[  s,t\right]  }\displaystyle\left\vert \int_{s}%
^{r}\left\langle \Delta X(u),\sigma(u)-\sigma^{\prime}(u)\right\rangle
dW\left(  u\right)  \right\vert \leq6\mathbb{E}\left[  \int_{s}^{t}\left\vert
\left\langle \Delta X(u),\sigma(u)-\sigma^{\prime}(u)\right\rangle \right\vert
^{2}du\right]  ^{1/2}\medskip\\
\leq6\sqrt{3}\ell\mathbb{E}\displaystyle\left[  \int_{s}^{t}\left\vert \Delta
X\left(  u\right)  \right\vert ^{2}\left(  \left\vert \Delta X\left(
u\right)  \right\vert ^{2}+\left\vert \Delta Y\left(  u\right)  \right\vert
^{2}+\left\vert \Delta Z\left(  u\right)  \right\vert ^{2}\right)  du\right]
^{1/2}\medskip\\
\leq6\sqrt{3}\ell\mathbb{E}\displaystyle\left[  \sup_{u\in\left[  s,t\right]
}\left\vert \Delta X\left(  u\right)  \right\vert \left(  \int_{s}%
^{t}\left\vert \Delta X\left(  u\right)  \right\vert ^{2}du\right)
^{1/2}\right]  +6\sqrt{3}\ell\mathbb{E}\left[  \int_{s}^{t}\left\vert \Delta
X\left(  u\right)  \right\vert ^{2}\left\vert \Delta Y\left(  u\right)
\right\vert ^{2}du\right]  ^{1/2}\medskip\\
\quad+6\sqrt{3}\ell\mathbb{E}\displaystyle\left[  \int_{s}^{t}\left\vert
\Delta X\left(  u\right)  \right\vert ^{2}\left\vert \Delta Z\left(  u\right)
\right\vert ^{2}du\right]  ^{1/2}%
\end{array}
\]
and, using also the computations from (\ref{differ of Y}) and
(\ref{differ of Z}),%
\[%
\begin{array}
[c]{l}%
2\mathbb{E}\sup_{r\in\left[  s,t\right]  }\displaystyle\left\vert \int_{s}%
^{r}\left\langle \Delta X(u),\sigma(u)-\sigma^{\prime}(u)\right\rangle
dW\left(  u\right)  \right\vert \medskip\\
\leq\displaystyle\frac{1}{4}\mathbb{E}\sup_{r\in\left[  s,t\right]
}\left\vert \Delta X\left(  r\right)  \right\vert ^{2}+108\ell^{2}%
\mathbb{E}\int_{s}^{t}\left\vert \Delta X\left(  r\right)  \right\vert
^{2}dr+6\sqrt{6\delta}\ell\mathbb{E}\left[  \int_{s}^{t}\left\vert \Delta
X\left(  r\right)  \right\vert ^{2}\left(  \int_{-\delta}^{0}\left\vert
\xi\left(  u\right)  -\xi^{\prime}\left(  u\right)  \right\vert ^{2}du\right)
dr\right]  ^{1/2}\medskip\\
\quad\displaystyle+6\sqrt{6\delta}\ell\mathbb{E}\left[  \int_{s}^{t}\left\vert
\Delta X\left(  r\right)  \right\vert ^{2}\left(  \int_{s^{\prime}-\delta
}^{s^{\prime}}\left\vert \xi^{\prime}\left(  u-s\right)  -\xi^{\prime}\left(
u-s^{\prime}\right)  \right\vert ^{2}du\right)  dr\right]  ^{1/2}\medskip\\
\quad\displaystyle+6\sqrt{3\delta}\ell\mathbb{E}\left[  \int_{s}^{t}\left\vert
\Delta X\left(  r\right)  \right\vert ^{2}\left(  \int_{\bar{s}}^{s}\left\vert
\Delta X\left(  u\right)  \right\vert ^{2}du\right)  du\right]  ^{1/2}%
\medskip\\
\quad\displaystyle+6\sqrt{3\delta}\ell\mathbb{E}\left[  \int_{s}^{t}\left\vert
\Delta X\left(  r\right)  \right\vert ^{2}\left(  \int_{s}^{r}\left\vert
\Delta X\left(  r\right)  \right\vert ^{2}dr\right)  dr\right]  ^{1/2}%
+6\sqrt{3}\ell\mathbb{E}\left[  \int_{s}^{t}\left\vert \Delta X\left(
r\right)  \right\vert ^{2}\left\vert \Delta Z\left(  r\right)  \right\vert
^{2}dr\right]  ^{1/2}\medskip\\
\leq\displaystyle\frac{1}{2}\mathbb{E}\sup_{r\in\left[  s,t\right]
}\left\vert \Delta X\left(  r\right)  \right\vert ^{2}+C_{1}\mathbb{E}%
\int_{-\delta}^{0}\left\vert \xi\left(  r\right)  -\xi^{\prime}\left(
r\right)  \right\vert ^{2}dr+C_{2}\mathbb{E}\int_{s^{\prime}-\delta
}^{s^{\prime}}\left\vert \xi^{\prime}\left(  r-s\right)  -\xi^{\prime}\left(
r-s^{\prime}\right)  \right\vert ^{2}dr\medskip\\
\quad\displaystyle+C_{3}\mathbb{E}\int_{\bar{s}}^{s}\left\vert \Delta X\left(
r\right)  \right\vert ^{2}dr+C_{4}\mathbb{E}\int_{s}^{t}\left\vert \Delta
X\left(  r\right)  \right\vert ^{2}dr.
\end{array}
\]
From inequality (\ref{Ito for differ 2}) and the inequalities obtained in
Proposition \ref{a priori estimates} we see that there exists a constant $C>0$
such that%
\[%
\begin{array}
[c]{l}%
\displaystyle\mathbb{E}\sup_{r\in\left[  s,t\right]  }\left\vert \Delta
X\left(  r\right)  \right\vert ^{2}\leq C\,\Gamma_{1}+C\left(  s-s^{\prime
}\right)  +3\mathbb{E}\int_{s^{\prime}}^{s}\left\vert \sigma^{\prime}\left(
r\right)  \right\vert ^{2}dr\medskip\\
\displaystyle\quad+18\ell^{2}\left(  s-s^{\prime}\right)  \mathbb{E}%
\int_{s^{\prime}}^{s}\left(  |X^{s^{\prime},\xi^{\prime}}\left(  r\right)
|^{2}+|Y^{s^{\prime},\xi^{\prime}}\left(  r\right)  |^{2}+|Z^{s^{\prime}%
,\xi^{\prime}}\left(  r\right)  |^{2}\right)  dr\medskip\\
\displaystyle\quad+3\ell^{2}\left(  1+\delta T\right)  \mathbb{E}%
\int_{s^{\prime}}^{s}\left\vert \Delta X\left(  r\right)  \right\vert
^{2}dr+C\int_{s}^{t}\mathbb{E}\sup_{u\in\left[  s,r\right]  }\left\vert \Delta
X\left(  u\right)  \right\vert ^{2}dr\medskip\\
\displaystyle\leq C~\Gamma_{1}+C\left(  s-s^{\prime}\right)  +C\int_{s}%
^{t}\mathbb{E}\sup_{u\in\left[  s,r\right]  }\left\vert \Delta X\left(
u\right)  \right\vert ^{2}dr\medskip\\
\displaystyle\quad+C\mathbb{E}\int_{s^{\prime}}^{s}\left(  |X^{s,\xi}\left(
r\right)  |^{2}+|X^{s^{\prime},\xi^{\prime}}\left(  r\right)  |^{2}+|Y^{s,\xi
}\left(  r\right)  |^{2}+|Y^{s^{\prime},\xi^{\prime}}\left(  r\right)
|^{2}+|Z^{s,\xi}\left(  r\right)  |^{2}+|Z^{s^{\prime},\xi^{\prime}}\left(
r\right)  |^{2}\right)  dr\medskip\\
\displaystyle\leq C~\Big[\Gamma_{1}+\left\vert s-s^{\prime}\right\vert
\big(1+\mathbb{E}\left\Vert \xi\right\Vert _{\mathcal{C}}^{2}+\mathbb{E}%
||\xi^{\prime}||_{\mathcal{C}}^{2}\big)\Big]+C\int_{s}^{t}\mathbb{E}\sup
_{u\in\left[  s,r\right]  }\left\vert \Delta X\left(  u\right)  \right\vert
^{2}dr
\end{array}
\]
and therefore, applying Gronwall's inequality,%
\[
\mathbb{E}\sup_{r\in\left[  s,t\right]  }|X^{s,\xi}\left(  r\right)
-X^{s^{\prime},\xi^{\prime}}\left(  r\right)  |^{2}\leq C~\Big[\Gamma
_{1}+\left\vert s-s^{\prime}\right\vert \big(1+\mathbb{E}\left\Vert
\xi\right\Vert _{\mathcal{C}}^{2}+\mathbb{E}||\xi^{\prime}||_{\mathcal{C}}%
^{2}\big)\Big].
\]
In order to finish the proof of (\ref{ineq uniq}) we shall use the above
inequalities and similar computations in the following inequality which is
obtained from (\ref{SVI delayed 3}) and (\ref{SVI delayed 4}):%
\[%
\begin{array}
[c]{l}%
\mathbb{E}\sup_{r\in\left[  s,T\right]  }|K^{s,\xi}\left(  r\right)
-K^{s^{\prime},\xi^{\prime}}\left(  r\right)  |^{2}\leq4|X^{s,\xi}\left(
s\right)  -X^{s^{\prime},\xi^{\prime}}\left(  s\right)  |^{2}+4\mathbb{E}%
\sup_{r\in\left[  s,T\right]  }\left\vert \Delta X\left(  r\right)
\right\vert ^{2}\medskip\\
\quad+4T\displaystyle\mathbb{E}\int_{s}^{t}\left\vert b\left(  r\right)
-b^{\prime}\left(  r\right)  \right\vert ^{2}dr+4\mathbb{E}\sup_{r\in\left[
s,T\right]  }\left\vert \int_{s}^{r}\left(  \sigma\left(  r\right)
-\sigma^{\prime}\left(  r\right)  \right)  dW\left(  r\right)  \right\vert
^{2}.
\end{array}
\]
\hfill
\end{proof}

We state now the main result of this section:

\begin{theorem}
\label{main result 1}Under the assumptions $\mathrm{(H}_{1}-\mathrm{H}%
_{3}\mathrm{)}$ equation (\ref{SVI delayed 2}) has a unique solution in the
sense of Definition \ref{def sol of delay SVI 1}. Moreover, there exists a
constant $C=C\left(  \ell,\kappa,\delta,T\right)  >0$ such that%
\[
\mathbb{E}\sup_{r\in\left[  s,T\right]  }\left\vert X\left(  r\right)
\right\vert ^{2}+\mathbb{E}\sup_{r\in\left[  s,T\right]  }\left\vert K\left(
r\right)  \right\vert ^{2}+\mathbb{E}\left\Vert K\right\Vert _{\mathrm{BV}%
\left(  \left[  -\delta,T\right]  ;\mathbb{R}^{d}\right)  }+\mathbb{E}\int
_{s}^{T}\varphi\left(  X\left(  r\right)  \right)  dr\leq C\,\big(1+\mathbb{E}%
\left\Vert \xi\right\Vert _{\mathcal{C}}^{2}\big)
\]
and%
\[
\mathbb{E}\sup_{r\in\left[  s,T\right]  }\left\vert X\left(  r\right)
\right\vert ^{4}+\mathbb{E}\left\Vert K\right\Vert _{\mathrm{BV}\left(
\left[  -\delta,T\right]  ;\mathbb{R}^{d}\right)  }^{2}+\mathbb{E}%
\Big(\int_{s}^{T}\varphi\left(  X\left(  r\right)  \right)  dr\Big)^{2}\leq
C\,\big(1+\mathbb{E}\left\Vert \xi\right\Vert _{\mathcal{C}}^{4}\big).
\]

\end{theorem}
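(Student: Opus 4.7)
The strategy is standard for stochastic variational inequalities: deduce uniqueness from the already-proved continuous-dependence estimate, and construct the solution through Moreau-Yosida penalization, relying on the tool-kit (\ref{ineq Yosida})-(\ref{ineq Yosida 2}). Uniqueness is immediate from (\ref{ineq uniq}): taking $(s^{\prime},\xi^{\prime})=(s,\xi)$ makes $\Gamma_{1}$ vanish, hence $\mathbb{E}\sup\lvert X^{s,\xi}-X^{s,\xi}\rvert^{2}=0$, forcing any two solutions to agree $\mathbb{P}$-a.s. For existence, I would replace $\partial\varphi$ in (\ref{SVI delayed 2}) by the $\frac{1}{\epsilon}$-Lipschitz gradient $\nabla\varphi_{\epsilon}$ and consider the penalized delay SDE
\[
dX_{\epsilon}(t)+\nabla\varphi_{\epsilon}(X_{\epsilon}(t))dt=b\bigl(t,X_{\epsilon}(t),Y_{\epsilon}(t),Z_{\epsilon}(t)\bigr)dt+\sigma\bigl(t,X_{\epsilon}(t),Y_{\epsilon}(t),Z_{\epsilon}(t)\bigr)dW(t),
\]
with $X_{\epsilon}(t)=\xi(t-s)$ on $[s-\delta,s]$. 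Since the past-dependence via $Y_{\epsilon}$, $Z_{\epsilon}$ is linear and non-anticipative, Mohammed's theory \cite{mo/98} of stochastic functional differential equations supplies a unique adapted continuous solution $X_{\epsilon}$ on $[s-\delta,T]$.

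Next I would derive $\epsilon$-uniform estimates. Applying It\^{o} to $\lvert X_{\epsilon}\rvert^{2}$ and using (\ref{ineq Yosida 2})-(ix) to discard the penalization term, the computation of Proposition~\ref{a priori estimates} transfers verbatim and yields $\mathbb{E}\sup_{r\in[s,T]}\lvert X_{\epsilon}(r)\rvert^{2}\leq C(1+\mathbb{E}\Vert\xi\Vert_{\mathcal{C}}^{2})$ uniformly in $\epsilon$. It\^{o} applied to $\lvert X_{\epsilon}-u_{0}\rvert^{2}$ for a fixed $u_{0}\in\mathrm{Int}(\mathrm{Dom}(\varphi))$, combined with (\ref{ineq Yosida})-(vii), delivers the total-variation bound $\mathbb{E}\int_{s}^{T}\lvert\nabla\varphi_{\epsilon}(X_{\epsilon}(r))\rvert dr\leq C(1+\mathbb{E}\Vert\xi\Vert_{\mathcal{C}}^{2})$ for $K_{\epsilon}(t):=\int_{s}^{t}\nabla\varphi_{\epsilon}(X_{\epsilon}(r))dr$, and a further It\^{o} computation on $\varphi_{\epsilon}(X_{\epsilon})$ as in \cite{as-ra/97} upgrades this to an $\epsilon$-uniform $L^{2}$-bound on $\nabla\varphi_{\epsilon}(X_{\epsilon})$. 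To show $(X_{\epsilon})$ is Cauchy in $L_{ad}^{2}(\Omega;C([s-\delta,T];\mathbb{R}^{d}))$, I apply It\^{o} to $\lvert X_{\epsilon}-X_{\epsilon^{\prime}}\rvert^{2}$ and control the crucial cross term $\int\langle X_{\epsilon}-X_{\epsilon^{\prime}},\nabla\varphi_{\epsilon}(X_{\epsilon})-\nabla\varphi_{\epsilon^{\prime}}(X_{\epsilon^{\prime}})\rangle dr$ from below by means of (\ref{ineq Yosida})-(vi), which after Cauchy-Schwarz and the $L^{2}$-bound just secured produces a defect of order $\epsilon+\epsilon^{\prime}$; the delay differences $\Delta Y$, $\Delta Z$ are handled exactly as in (\ref{differ of Y})-(\ref{differ of Z}), and Gronwall closes the Cauchy estimate.

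In the limit $\epsilon\to 0$, $X_{\epsilon}\to X$ in $L_{ad}^{2}(\Omega;C)$, and since $\lvert X_{\epsilon}-J_{\epsilon}(X_{\epsilon})\rvert=\epsilon\lvert\nabla\varphi_{\epsilon}(X_{\epsilon})\rvert\to 0$ in $L^{2}$, the limit $X$ takes values in $\overline{\mathrm{Dom}(\varphi)}$. Defining $K(t):=X(s)-X(t)+\int_{s}^{t}b(r,X,Y,Z)dr+\int_{s}^{t}\sigma(r,X,Y,Z)dW(r)$, the uniform BV-bound is inherited by lower semicontinuity, and the subdifferential inclusion $dK(t)\in\partial\varphi(X(t))dt$ follows from (\ref{ineq Yosida})-(iii) --- $\nabla\varphi_{\epsilon}(X_{\epsilon})\in\partial\varphi(J_{\epsilon}(X_{\epsilon}))$ --- by passing to the limit in the defining convex inequality and invoking the lower semicontinuity of $\varphi$ together with Fatou. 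The fourth-moment bounds follow by applying It\^{o} to $\lvert X\rvert^{4}$, controlling the martingale part via Burkholder-Davis-Gundy, squaring the (vii)-based BV-estimate, and a final Gronwall. The principal obstacle is the Cauchy step: inequality (\ref{ineq Yosida})-(vi) yields only the error term $-(\epsilon+\epsilon^{\prime})\langle\nabla\varphi_{\epsilon}(X_{\epsilon}),\nabla\varphi_{\epsilon^{\prime}}(X_{\epsilon^{\prime}})\rangle$, whose absorption requires the a priori $L^{2}$-bound on $\nabla\varphi_{\epsilon}(X_{\epsilon})$ which is delicate to derive because of the non-trivial coupling through the delayed arguments $Y_{\epsilon}$, $Z_{\epsilon}$.
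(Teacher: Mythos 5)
Your skeleton (uniqueness via the continuous-dependence estimate (\ref{ineq uniq}) with $(s',\xi')=(s,\xi)$, then penalization, Mohammed's theory for the penalized delay SDE, $\epsilon$-uniform a priori bounds, a Cauchy estimate, and passage to the limit) is exactly the paper's route. However, there is a genuine gap at the step you yourself flag as "the principal obstacle": you absorb the defect $(\epsilon+\epsilon')\langle\nabla\varphi_{\epsilon}(X_{\epsilon}),\nabla\varphi_{\epsilon'}(X_{\epsilon'})\rangle$ by Cauchy--Schwarz against an \emph{$\epsilon$-uniform $L^{2}(\Omega\times[s,T])$ bound on} $\nabla\varphi_{\epsilon}(X_{\epsilon})$, and such a bound is false in general --- not merely "delicate because of the delay". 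If one had $\mathbb{E}\int_{s}^{T}|\nabla\varphi_{\epsilon}(X_{\epsilon}(r))|^{2}dr\leq C$ uniformly in $\epsilon$, the limit process $K$ would be absolutely continuous in time with an $L^{2}$ density; but already for $d=1$, $\varphi=I_{[0,+\infty)}$, $b=0$, $\sigma=1$ (reflected Brownian motion, no delay at all), $K$ is minus the local time at $0$, which is singular with respect to Lebesgue measure. Structurally, the obstruction is that the It\^{o} computation on $\varphi_{\epsilon}(X_{\epsilon})$ you invoke produces the second-order term $\frac{1}{2}\mathrm{Tr}\big(\sigma\sigma^{\ast}D^{2}\varphi_{\epsilon}\big)$, and $D^{2}\varphi_{\epsilon}$ can only be bounded by $\epsilon^{-1}I$; so this route yields $\mathbb{E}\int|\nabla\varphi_{\epsilon}(X_{\epsilon})|^{2}dr\leq C/\epsilon$, not a uniform bound. (There is also the technical point that $\varphi_{\epsilon}$ is $C^{1}$ with Lipschitz gradient but not $C^{2}$, so even writing this identity requires the generalized It\^{o} inequality of Remark 2.34 in \cite{pa-ra/12}, as the paper does.)

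The paper closes the Cauchy step by a different interpolation, which is the actual technical heart of the proof. It establishes two bounds of different natures: (a) the $\epsilon$-uniform bound $\mathbb{E}\big(\int_{0}^{T}|\nabla\varphi_{\epsilon}(X_{\epsilon}(r))|\,dr\big)^{2}\leq C(1+\mathbb{E}\Vert\xi\Vert_{\mathcal{C}}^{2})$ from (\ref{ineq Yosida}-$vii$) (inequality (\ref{ineq bound 3}), i.e. $L^{1}$ in time, $L^{2}$ in $\omega$); and (b) the \emph{non-uniform} sup-bound (\ref{ineq bound 5}), $\mathbb{E}\sup_{r}|\nabla\varphi_{\epsilon}(X_{\epsilon}(r))|^{4}\leq C\,\epsilon^{-7/2}\,\Gamma_{2}$, obtained by applying the generalized It\^{o} formula to $\varphi_{\epsilon}^{2}(X_{\epsilon})$ --- this is precisely where the hypothesis $\varphi(\xi(0))\in L^{1}$ enters. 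The cross term is then split by H\"{o}lder's inequality (twice) as
\begin{equation*}
\epsilon\,\mathbb{E}\Big[\sup_{r}|\nabla\varphi_{\epsilon}(X_{\epsilon}(r))|\int_{0}^{t}|\nabla\varphi_{\tau}(X_{\tau}(r))|\,dr\Big]
\leq \epsilon\Big[\mathbb{E}\sup_{r}|\nabla\varphi_{\epsilon}(X_{\epsilon}(r))|^{4}\Big]^{1/4}\Big[\mathbb{E}\Big(\int_{0}^{t}|\nabla\varphi_{\tau}(X_{\tau}(r))|\,dr\Big)^{2}\Big]^{1/2},
\end{equation*}
and the blow-up $\epsilon^{-7/8}$ from (b) is beaten by the prefactor $\epsilon$, leaving the rate $\epsilon^{1/8}$ of (\ref{Cauchy property of X}) --- weaker than the $O(\epsilon+\epsilon')$ you claim, but sufficient for the Cauchy property and for the final identification $J_{\epsilon}(X_{\epsilon})\to X$ in the limit step. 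So to repair your proof you should replace the (unobtainable) uniform $L^{2}$ bound by this pair of estimates and the H\"{o}lder interpolation; the rest of your argument (uniqueness, a priori bounds, passage to the limit via (\ref{ineq Yosida}-$iii$) and lower semicontinuity) is sound and coincides with the paper's.
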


\subsection{Proof of Theorem \ref{main result 1}}

In order to simplify computations we will assume that $s=0$.

The existence of a solution will be proved starting from the penalized
equation:%
\begin{equation}
\left\{
\begin{array}
[c]{r}%
dX_{\epsilon}\left(  t\right)  +\nabla\varphi_{\epsilon}\left(  X_{\epsilon
}\left(  t\right)  \right)  dt=b\left(  t,X_{\epsilon}\left(  t\right)
,Y_{\epsilon}\left(  t\right)  ,Z_{\epsilon}\left(  t\right)  \right)
dt+\sigma\left(  t,X_{\epsilon}\left(  t\right)  ,Y_{\epsilon}\left(
t\right)  ,Z_{\epsilon}\left(  t\right)  \right)  dW(t),\medskip\\
t\in(0,T],\medskip\\
\multicolumn{1}{l}{X_{\epsilon}\left(  t\right)  =\xi\left(  t\right)
,\;t\in\left[  -\delta,0\right]  ,}%
\end{array}
\right.  \label{SVI delayed approxim}%
\end{equation}
where, for $\epsilon>0$, $\varphi_{\epsilon}$ is defined by
(\ref{Yosida regular}) and%
\begin{equation}
Y_{\epsilon}(t):=\int_{-\delta}^{0}e^{\lambda r}X_{\epsilon}(t+r)dr,\quad
Z_{\epsilon}(t):=X_{\epsilon}(t-\delta). \label{def of Y,Z eps}%
\end{equation}
The proof will be spitted into several steps which are adapted from the proof
of Theorem 2.1 from \cite{as-ra/97}.

Since $\nabla\varphi_{\epsilon}$ is a Lipschitz function, it is known (see
e.g. \cite{mo/98}) that there exists a unique solution $X_{\epsilon}\in
L_{ad}^{2}\left(  \Omega;C\left(  \left[  -\delta,T\right]  \right)  \right)
$.

We define%
\begin{equation}
K_{\epsilon}\left(  t\right)  =\int_{0}^{t}\nabla\varphi_{\epsilon}\left(
X_{\epsilon}\left(  s\right)  \right)  ds. \label{def of K eps}%
\end{equation}
Taking into account that $\mathrm{(H}_{1}\mathrm{)}$ is satisfied, we can
assume in what follows, without restrict our generality, that%
\[
\xi\in L^{\infty}\big(\Omega;\mathcal{C}\big(\left[  -\delta,0\right]
;\mathrm{Int}\left(  \mathrm{Dom}\left(  \varphi\right)  \right)
\big)\big)\quad\text{and}\quad\varphi\left(  \xi\left(  0\right)  \right)  \in
L^{\infty}\left(  \Omega;\mathbb{R}\right)  .
\]
$\smallskip$

\noindent\textrm{A.} \textit{Boundedness of }$X_{\epsilon}$\textit{ and
}$K_{\epsilon}$

We will prove that there exists a constant $C>0$ such that%
\begin{equation}
\mathbb{E}\sup_{s\in\left[  0,T\right]  }\left\vert X_{\epsilon}\left(
s\right)  \right\vert ^{4}+\displaystyle\mathbb{E}\left[  \int_{0}^{T}%
\varphi\left(  J_{\epsilon}\left(  X_{\epsilon}\left(  r\right)  \right)
\right)  dr\right]  ^{2}+\mathbb{E}\left[  \int_{0}^{T}\varphi_{\epsilon
}\left(  X_{\epsilon}\left(  r\right)  \right)  dr\right]  ^{2}\leq
C~\big[1+\mathbb{E}\left\Vert \xi\right\Vert _{\mathcal{C}}^{4}%
\big] \label{ineq bound}%
\end{equation}
and%
\begin{equation}
\mathbb{E}\sup_{s\in\left[  0,T\right]  }\left\vert X_{\epsilon}\left(
s\right)  \right\vert ^{2}+\displaystyle\mathbb{E}\left[  \int_{0}^{T}%
\varphi_{\epsilon}\left(  X_{\epsilon}\left(  r\right)  \right)  dr\right]
\leq C~\big[1+\mathbb{E}\left\Vert \xi\right\Vert _{\mathcal{C}}^{4}\big].
\label{ineq bound 2}%
\end{equation}
Indeed, by applying It\^{o}'s formula we see that%
\begin{equation}%
\begin{array}
[c]{l}%
\left\vert X_{\epsilon}\left(  t\right)  \right\vert ^{2}+2\displaystyle\int
_{0}^{t}\left\langle X_{\epsilon}\left(  r\right)  ,\nabla\varphi_{\epsilon
}\left(  X_{\epsilon}\left(  r\right)  \right)  \right\rangle dr=\left\vert
X\left(  0\right)  \right\vert ^{2}+\int_{0}^{t}\left\vert \sigma_{\epsilon
}\left(  r\right)  \right\vert ^{2}dr\medskip\\
\quad+2\displaystyle\int_{0}^{t}\left\langle X_{\epsilon}(r),b_{\epsilon
}\left(  r\right)  \right\rangle dr+2\int_{0}^{t}\left\langle X_{\epsilon
}(r),\sigma_{\epsilon}\left(  r\right)  \right\rangle dW(r),
\end{array}
\label{Ito for approx}%
\end{equation}
where%
\begin{equation}
b_{\epsilon}\left(  r\right)  =b\left(  r,X_{\epsilon}\left(  r\right)
,Y_{\epsilon}\left(  r\right)  ,Z_{\epsilon}\left(  r\right)  \right)
,\quad\sigma_{\epsilon}\left(  r\right)  =\sigma\left(  r,X_{\epsilon}\left(
r\right)  ,Y_{\epsilon}\left(  r\right)  ,Z_{\epsilon}\left(  r\right)
\right)  . \label{def of b,sigma eps}%
\end{equation}
Using (\ref{ineq Yosida 2}-$ix$) we get%
\begin{equation}%
\begin{array}
[c]{l}%
\left\vert X_{\epsilon}\left(  t\right)  \right\vert ^{4}%
+4\displaystyle\left(  \int_{0}^{t}\varphi_{\epsilon}\left(  X_{\epsilon
}\left(  r\right)  \right)  dr\right)  ^{2}\leq4\left\vert X\left(  0\right)
\right\vert ^{4}+4\left(  \int_{0}^{t}\left\vert \sigma_{\epsilon}\left(
r\right)  \right\vert ^{2}dr\right)  ^{2}\medskip\\
\quad+16\displaystyle\left(  \int_{0}^{t}\left\langle X_{\epsilon
}(r),b_{\epsilon}\left(  r\right)  \right\rangle dr\right)  ^{2}+16\left(
\int_{0}^{t}\left\langle X_{\epsilon}(r),\sigma_{\epsilon}\left(  r\right)
\right\rangle dW(r)\right)  ^{2}.
\end{array}
\label{Ito for approx 2}%
\end{equation}
By Doob inequality we see that%
\[%
\begin{array}
[c]{l}%
\mathbb{E}\sup_{s\in\left[  0,t\right]  }\displaystyle\left\vert \int_{0}%
^{s}\left\langle X_{\epsilon}\left(  r\right)  ,\sigma_{\epsilon}\left(
r\right)  \right\rangle dW\left(  r\right)  \right\vert ^{2}\leq
4\mathbb{E}\left[  \displaystyle\int_{0}^{t}\left\vert \left\langle
X_{\epsilon}(r),\sigma_{\epsilon}\left(  r\right)  \right\rangle \right\vert
^{2}dr\right]  \medskip\\
\leq4\mathbb{E}\displaystyle\int_{0}^{t}\left\vert X_{\epsilon}\left(
r\right)  \right\vert ^{2}\left[  6\ell^{2}\left(  \left\vert X_{\epsilon
}\left(  r\right)  \right\vert ^{2}+\left\vert Y_{\epsilon}\left(  r\right)
\right\vert ^{2}+\left\vert Z_{\epsilon}\left(  r\right)  \right\vert
^{2}\right)  +2\left\vert \sigma_{\epsilon}\left(  0\right)  \right\vert
^{2}\right]  dr\medskip\\
\leq24\ell^{2}\mathbb{E}\displaystyle\left[  \sup_{s\in\left[  0,t\right]
}\left\vert X_{\epsilon}\left(  s\right)  \right\vert ^{2}\int_{0}%
^{t}\left\vert X_{\epsilon}\left(  r\right)  \right\vert ^{2}dr\right]
+24\ell^{2}\mathbb{E}\left[  \int_{0}^{t}\left\vert X_{\epsilon}\left(
r\right)  \right\vert ^{2}\left\vert Y_{\epsilon}\left(  r\right)  \right\vert
^{2}dr\right]  \medskip\\
\quad+24\ell^{2}\mathbb{E}\displaystyle\left[  \int_{0}^{t}\left\vert
X_{\epsilon}\left(  r\right)  \right\vert ^{2}\left\vert Z_{\epsilon}\left(
r\right)  \right\vert ^{2}dr\right]  +8\kappa^{2}\mathbb{E}\left[  \int
_{0}^{t}\left\vert X_{\epsilon}\left(  r\right)  \right\vert ^{2}dr\right]
\medskip\\
\leq\displaystyle\frac{1}{4}\mathbb{E}\sup_{s\in\left[  0,t\right]
}\left\vert X_{\epsilon}\left(  s\right)  \right\vert ^{4}+4\kappa^{2}+\left(
24^{2}\ell^{4}+4\kappa^{2}\right)  \mathbb{E}\left(  \int_{0}^{t}\left\vert
X_{\epsilon}\left(  r\right)  \right\vert ^{2}dr\right)  ^{2}\medskip\\
\quad+24\ell^{2}\delta\mathbb{E}\displaystyle\left[  \int_{0}^{t}\left\vert
X_{\epsilon}\left(  r\right)  \right\vert ^{2}\left(  \int_{-\delta}%
^{0}\left\vert \xi\left(  u\right)  \right\vert ^{2}du\right)  dr\right]
+24\ell^{2}\delta\mathbb{E}\left[  \int_{0}^{t}\left\vert X_{\epsilon}\left(
r\right)  \right\vert ^{2}\left(  \int_{0}^{r}\left\vert X_{\epsilon}\left(
u\right)  \right\vert ^{2}du\right)  dr\right]  \medskip\\
\quad+24\ell^{2}\mathbb{E}\displaystyle\left[  \int_{0}^{t}\left\vert
X_{\epsilon}\left(  r\right)  \right\vert ^{2}\left\vert X_{\epsilon}\left(
r-\delta\right)  \right\vert ^{2}dr\right]  \medskip\\
\leq\displaystyle\frac{1}{2}\mathbb{E}\sup_{s\in\left[  0,t\right]
}\left\vert X_{\epsilon}\left(  s\right)  \right\vert ^{4}+C_{1}%
+C_{2}\mathbb{E}\int_{0}^{t}\left\vert X_{\epsilon}\left(  r\right)
\right\vert ^{4}dr+C_{3}\mathbb{E}\int_{-\delta}^{0}\left\vert \xi\left(
r\right)  \right\vert ^{4}dr.
\end{array}
\]
From (\ref{ineq Yosida 2}) we easily get%
\[
\mathbb{E}\sup_{s\in\left[  0,t\right]  }\left\vert X_{\epsilon}\left(
s\right)  \right\vert ^{4}+\displaystyle\mathbb{E}\left[  \int_{0}^{t}%
\varphi_{\epsilon}\left(  X_{\epsilon}\left(  r\right)  \right)  dr\right]
^{2}\leq C_{4}\mathbb{E}\left[  1+\left\vert \xi\left(  0\right)  \right\vert
^{4}+\int_{-\delta}^{0}\left\vert \xi\left(  r\right)  \right\vert
^{4}dr\right]  +C_{5}\int_{0}^{t}\mathbb{E}\sup_{s\in\left[  0,r\right]
}\left\vert X_{\epsilon}\left(  s\right)  \right\vert ^{4}dr
\]
and by Gronwall's inequality we obtain the conclusion (\ref{ineq bound}).

Also from (\ref{Ito for approx}) it can be deduced, by similar computation,
inequality (\ref{ineq bound 2}).$\medskip$

\noindent\textrm{B.} \textit{Boundedness of }$\nabla\varphi_{\epsilon}\left(
X_{\epsilon}\left(  r\right)  \right)  $

Let $u_{0}\in\mathrm{Int}\left(  \mathrm{Dom}\left(  \varphi\right)  \right)
$ and we recall (\ref{ineq Yosida}-$vii$). It\^{o}'s formula yields%
\begin{equation}%
\begin{array}
[c]{l}%
2r_{0}\displaystyle\int_{0}^{t}\left\vert \nabla\varphi_{\epsilon}\left(
X_{\epsilon}\left(  r\right)  \right)  \right\vert dr-2M_{0}t\leq\left\vert
X\left(  0\right)  -u_{0}\right\vert ^{2}+\int_{0}^{t}\left\vert
\sigma_{\epsilon}\left(  r\right)  \right\vert ^{2}dr\medskip\\
\quad+2\displaystyle\int_{0}^{t}\left\langle X_{\epsilon}(r)-u_{0}%
,b_{\epsilon}\left(  r\right)  \right\rangle dr+2\int_{0}^{t}\left\langle
X_{\epsilon}(r)-u_{0},\sigma_{\epsilon}\left(  r\right)  \right\rangle dW(r)
\end{array}
\label{Ito for approx 3}%
\end{equation}
hence, by the isometry of the stochastic integral,%
\begin{equation}
\mathbb{E}\left(  \int_{0}^{T}\left\vert \nabla\varphi_{\epsilon}\left(
X_{\epsilon}\left(  r\right)  \right)  \right\vert dr\right)  ^{2}\leq
C~\big[1+\mathbb{E}\left\Vert \xi\right\Vert _{\mathcal{C}}^{2}\big].
\label{ineq bound 3}%
\end{equation}
It is immediately that%
\begin{equation}%
\begin{array}
[c]{l}%
\mathbb{E}\left(  \left\Vert K_{\epsilon}\right\Vert _{\mathrm{BV}\left(
\left[  -\delta,T\right]  \right)  }^{2}\right)  \leq C~\big[1+\mathbb{E}%
\left\Vert \xi\right\Vert _{\mathcal{C}}^{4}\big]\quad\text{and}\medskip\\
\mathbb{E}\left(  \left\Vert K_{\epsilon}\right\Vert _{\mathrm{BV}\left(
\left[  -\delta,T\right]  \right)  }\right)  \leq C~\big[1+\mathbb{E}%
\left\Vert \xi\right\Vert _{\mathcal{C}}^{2}\big].
\end{array}
\label{ineq bound 4}%
\end{equation}
Next we shall prove that%
\begin{equation}%
\begin{array}
[c]{l}%
\displaystyle\mathbb{E}\sup_{r\in\left[  0,T\right]  }\left\vert \nabla
\varphi_{\epsilon}\left(  X_{\epsilon}\left(  r\right)  \right)  \right\vert
^{4}\leq\frac{1}{\epsilon^{3}\sqrt{\epsilon}}C\mathbb{E}\left[  1+\left\vert
\xi\left(  0\right)  \right\vert ^{4}+\varphi^{2}\left(  \xi\left(  0\right)
\right)  +\int_{-\delta}^{0}\left\vert \xi\left(  r\right)  \right\vert
^{4}dr\right]  \medskip\\
\leq\displaystyle\frac{1}{\epsilon^{3}\sqrt{\epsilon}}C\left[  1+\mathbb{E}%
\varphi^{2}\left(  \xi\left(  0\right)  \right)  +\mathbb{E}\left\Vert
\xi\right\Vert _{\mathcal{C}}^{4}\right]  =:\frac{1}{\epsilon^{3}%
\sqrt{\epsilon}}C\,\Gamma_{2}~.
\end{array}
\label{ineq bound 5}%
\end{equation}
We cannot apply the It\^{o}'s formula for $\varphi_{\epsilon}^{2}\left(
X_{\epsilon}\left(  t\right)  \right)  $ but, since $\varphi_{\epsilon}$ is of
class $C^{1}$, we can apply Remark 2.34 from \cite{pa-ra/12} and we obtain%
\[%
\begin{array}
[c]{l}%
\varphi_{\epsilon}^{2}\left(  X_{\epsilon}\left(  t\right)  \right)
+2\displaystyle\int_{0}^{t}\varphi_{\epsilon}\left(  X_{\epsilon}\left(
r\right)  \right)  \left\vert \nabla\varphi_{\epsilon}\left(  X_{\epsilon
}\left(  r\right)  \right)  \right\vert ^{2}dr\medskip\\
\leq\varphi_{\epsilon}^{2}\left(  X_{\epsilon}\left(  0\right)  \right)
+2\displaystyle\int_{0}^{t}\varphi_{\epsilon}\left(  X_{\epsilon}\left(
r\right)  \right)  \left\langle \nabla\varphi_{\epsilon}\left(  X_{\epsilon
}\left(  r\right)  \right)  ,b_{\epsilon}\left(  r\right)  \right\rangle
dr+\int_{0}^{t}\left\vert \nabla\varphi_{\epsilon}\left(  X_{\epsilon}\left(
r\right)  \right)  \right\vert ^{2}\left\vert \sigma_{\epsilon}\left(
r\right)  \right\vert ^{2}dr\medskip\\
\quad+\displaystyle\frac{1}{\epsilon}\int_{0}^{t}\varphi_{\epsilon}\left(
X_{\epsilon}\left(  r\right)  \right)  \left\vert \sigma_{\epsilon}\left(
r\right)  \right\vert ^{2}dr+2\int_{0}^{t}\varphi_{\epsilon}\left(
X_{\epsilon}\left(  r\right)  \right)  \left\langle \nabla\varphi_{\epsilon
}\left(  X_{\epsilon}\left(  r\right)  \right)  ,\sigma_{\epsilon}\left(
r\right)  \right\rangle dW\left(  r\right)  ,
\end{array}
\]
where $b_{\epsilon}$ and $\sigma_{\epsilon}$ are defined by
(\ref{def of b,sigma eps}).

From Doob's inequality and (\ref{ineq Yosida 2}-$ix$) we deduce that%
\[%
\begin{array}
[c]{l}%
2\mathbb{E}\sup_{s\in\left[  0,t\right]  }\displaystyle\left\vert \int_{0}%
^{s}\varphi_{\epsilon}\left(  X_{\epsilon}\left(  r\right)  \right)
\left\langle \nabla\varphi_{\epsilon}\left(  X_{\epsilon}\left(  r\right)
\right)  ,\sigma_{\epsilon}\left(  r\right)  \right\rangle dW\left(  r\right)
\right\vert \medskip\\
\leq6\mathbb{E}\displaystyle\left[  \int_{0}^{t}\varphi_{\epsilon}^{2}\left(
X_{\epsilon}\left(  r\right)  \right)  \left\vert \nabla\varphi_{\epsilon
}\left(  X_{\epsilon}\left(  r\right)  \right)  \right\vert ^{2}\left\vert
\sigma_{\epsilon}\left(  r\right)  \right\vert ^{2}dr\right]  ^{1/2}\medskip\\
\leq6\mathbb{E}\displaystyle\left[  \sup_{s\in\left[  0,t\right]  }%
\varphi_{\epsilon}\left(  X_{\epsilon}\left(  r\right)  \right)  \left(
\int_{0}^{t}\frac{2}{\epsilon}\varphi_{\epsilon}\left(  X_{\epsilon}\left(
r\right)  \right)  \left\vert \sigma_{\epsilon}\left(  r\right)  \right\vert
^{2}dr\right)  ^{1/2}\right]  \medskip\\
\leq\displaystyle\frac{1}{2}\mathbb{E}\sup_{s\in\left[  0,t\right]  }%
\varphi_{\epsilon}^{2}\left(  X_{\epsilon}\left(  r\right)  \right)
+\frac{36}{\epsilon}\mathbb{E}\left[  \int_{0}^{t}\varphi_{\epsilon}\left(
X_{\epsilon}\left(  r\right)  \right)  \left\vert \sigma_{\epsilon}\left(
r\right)  \right\vert ^{2}dr\right]  .
\end{array}
\]
Hence, using (\ref{ineq Yosida}-$ii$) and (\ref{ineq Yosida 2}), we get%
\begin{equation}%
\begin{array}
[c]{l}%
\displaystyle\frac{1}{2}\mathbb{E}\sup_{r\in\left[  0,t\right]  }%
\varphi_{\epsilon}^{2}\left(  X_{\epsilon}\left(  r\right)  \right)
+2\int_{0}^{t}\varphi_{\epsilon}\left(  X_{\epsilon}\left(  r\right)  \right)
\left\vert \nabla\varphi_{\epsilon}\left(  X_{\epsilon}\left(  r\right)
\right)  \right\vert ^{2}dr\medskip\\
\leq\mathbb{E}\varphi^{2}\left(  X_{\epsilon}\left(  0\right)  \right)
+\displaystyle\frac{2}{\epsilon}\int_{0}^{t}\varphi_{\epsilon}\left(
X_{\epsilon}\left(  r\right)  \right)  \left\vert X_{\epsilon}\left(
r\right)  \right\vert \left\vert b_{\epsilon}\left(  r\right)  \right\vert
dr+\frac{39}{\epsilon}\int_{0}^{t}\varphi_{\epsilon}\left(  X_{\epsilon
}\left(  r\right)  \right)  \left\vert \sigma_{\epsilon}\left(  r\right)
\right\vert ^{2}dr\medskip\\
=\mathbb{E}\varphi^{2}\left(  X_{\epsilon}\left(  0\right)  \right)
+\displaystyle\int_{0}^{t}\varphi_{\epsilon}\left(  X_{\epsilon}\left(
r\right)  \right)  A_{\epsilon}\left(  X_{\epsilon}\left(  r\right)  \right)
dr,
\end{array}
\label{Ito for approx 4}%
\end{equation}
where%
\[
A_{\epsilon}\left(  X_{\epsilon}\left(  r\right)  \right)  :=\frac{2}%
{\epsilon}\left\vert X_{\epsilon}\left(  r\right)  \right\vert \left\vert
b_{\epsilon}\left(  r\right)  \right\vert +\frac{39}{\epsilon}\left\vert
\sigma_{\epsilon}\left(  r\right)  \right\vert ^{2}.
\]
Now, using (\ref{ineq Yosida 2}-$ix$), Young's inequality and the convexity of
the function $\alpha\left(  x\right)  =x^{3/2}$, it follows%
\[%
\begin{array}
[c]{l}%
\varphi_{\epsilon}\left(  X_{\epsilon}\left(  r\right)  \right)  A_{\epsilon
}\left(  X_{\epsilon}\left(  r\right)  \right)  =\varphi_{\epsilon}%
^{1/3}\left(  X_{\epsilon}\left(  r\right)  \right)  \varphi_{\epsilon}%
^{2/3}\left(  X_{\epsilon}\left(  r\right)  \right)  A_{\epsilon}\left(
X_{\epsilon}\left(  r\right)  \right)  \medskip\\
\leq\varphi_{\epsilon}^{1/3}\left(  X_{\epsilon}\left(  r\right)  \right)
\left\vert \nabla\varphi_{\epsilon}\left(  X_{\epsilon}\left(  r\right)
\right)  \right\vert ^{2/3}\left\vert X_{\epsilon}\left(  r\right)
\right\vert ^{2/3}A_{\epsilon}\left(  X_{\epsilon}\left(  r\right)  \right)
\medskip\\
=\left[  3\varphi_{\epsilon}\left(  X_{\epsilon}\left(  r\right)  \right)
\left\vert \nabla\varphi_{\epsilon}\left(  X_{\epsilon}\left(  r\right)
\right)  \right\vert ^{2}\right]  ^{1/3}\cdot\frac{1}{\sqrt[3]{3}}\left\vert
X_{\epsilon}\left(  r\right)  \right\vert ^{2/3}A_{\epsilon}\left(
X_{\epsilon}\left(  r\right)  \right)  \medskip\\
\leq\varphi_{\epsilon}\left(  X_{\epsilon}\left(  r\right)  \right)
\left\vert \nabla\varphi_{\epsilon}\left(  X_{\epsilon}\left(  r\right)
\right)  \right\vert ^{2}+\frac{2}{3\sqrt{3}}\left\vert X_{\epsilon}\left(
r\right)  \right\vert A_{\epsilon}^{3/2}\left(  X_{\epsilon}\left(  r\right)
\right)  \medskip\\
\leq\varphi_{\epsilon}\left(  X_{\epsilon}\left(  r\right)  \right)
\left\vert \nabla\varphi_{\epsilon}\left(  X_{\epsilon}\left(  r\right)
\right)  \right\vert ^{2}+\frac{C}{\epsilon^{3/2}}\left\vert X_{\epsilon
}\left(  r\right)  \right\vert \left(  1+\left\vert X_{\epsilon}\left(
r\right)  \right\vert ^{2}+\left\vert Y_{\epsilon}\left(  r\right)
\right\vert ^{2}+\left\vert Z_{\epsilon}\left(  r\right)  \right\vert
^{2}\right)  ^{3/2}\medskip\\
\leq\varphi_{\epsilon}\left(  X_{\epsilon}\left(  r\right)  \right)
\left\vert \nabla\varphi_{\epsilon}\left(  X_{\epsilon}\left(  r\right)
\right)  \right\vert ^{2}+\frac{C}{\epsilon^{3/2}}\left\vert X_{\epsilon
}\left(  r\right)  \right\vert \left(  1+\left\vert X_{\epsilon}\left(
r\right)  \right\vert ^{3}+\left\vert Y_{\epsilon}\left(  r\right)
\right\vert ^{3}+\left\vert Z_{\epsilon}\left(  r\right)  \right\vert
^{3}\right)  \medskip\\
\leq\varphi_{\epsilon}\left(  X_{\epsilon}\left(  r\right)  \right)
\left\vert \nabla\varphi_{\epsilon}\left(  X_{\epsilon}\left(  r\right)
\right)  \right\vert ^{2}+\frac{C}{\epsilon^{3/2}}\left(  1+\left\vert
X_{\epsilon}\left(  r\right)  \right\vert ^{4}+\left\vert Y_{\epsilon}\left(
r\right)  \right\vert ^{4}+\left\vert Z_{\epsilon}\left(  r\right)
\right\vert ^{4}\right)
\end{array}
\]
and (\ref{Ito for approx 4}) becomes%
\[%
\begin{array}
[c]{l}%
\displaystyle\frac{1}{2}\mathbb{E}\sup_{r\in\left[  0,t\right]  }%
\varphi_{\epsilon}^{2}\left(  X_{\epsilon}\left(  r\right)  \right)
+\displaystyle\mathbb{E}\int_{0}^{t}\varphi_{\epsilon}\left(  X_{\epsilon
}\left(  r\right)  \right)  \left\vert \nabla\varphi_{\epsilon}\left(
X_{\epsilon}\left(  r\right)  \right)  \right\vert ^{2}dr\medskip\\
\leq\mathbb{E}\varphi^{2}\left(  X_{\epsilon}\left(  0\right)  \right)
+\displaystyle\frac{C}{\epsilon^{3/2}}\mathbb{E}\int_{0}^{t}\left(
1+\left\vert X_{\epsilon}\left(  r\right)  \right\vert ^{4}+\left\vert
Y_{\epsilon}\left(  r\right)  \right\vert ^{4}+\left\vert Z_{\epsilon}\left(
r\right)  \right\vert ^{4}\right)  dr\medskip\\
\leq\mathbb{E}\varphi^{2}\left(  \xi\left(  0\right)  \right)
+\displaystyle\frac{1}{\epsilon^{3/2}}C\mathbb{E}\left(  1+\left\vert
\xi\left(  0\right)  \right\vert ^{4}+\int_{-\delta}^{0}\left\vert \xi\left(
r\right)  \right\vert ^{4}dr\right)  \mathbb{\leq}\frac{1}{\epsilon^{3/2}%
}C\Gamma_{2},
\end{array}
\]
since $Y_{\epsilon}$ and $Z_{\epsilon}$ are defined by (\ref{def of Y,Z eps}).

The conclusion (\ref{ineq bound 5}) follows now using (\ref{ineq Yosida 2}%
-$ix$).$\medskip$

\noindent\textrm{C.} \textit{Cauchy property of the sequence }$\left(
X_{\epsilon},K_{\epsilon}\right)  $

Let $\epsilon,\tau\in(0,1]$. It\^{o}'s formula yields%
\[%
\begin{array}
[c]{l}%
|X_{\epsilon}\left(  t\right)  -X_{\tau}\left(  t\right)  |^{2}%
+2\displaystyle\int_{0}^{t}\left\langle \nabla\varphi_{\epsilon}\left(
X_{\epsilon}\left(  r\right)  \right)  -\nabla\varphi_{\tau}\left(  X_{\tau
}\left(  r\right)  \right)  ,X_{\epsilon}\left(  r\right)  -X_{\tau}\left(
r\right)  \right\rangle dr\medskip\\
=2\displaystyle\int_{0}^{t}\left\langle X_{\epsilon}\left(  r\right)
-X_{\tau}\left(  r\right)  ,b_{\epsilon}\left(  r\right)  -b_{\tau}\left(
r\right)  \right\rangle dr+\int_{0}^{t}\left\vert \sigma_{\epsilon}\left(
r\right)  -\sigma_{\tau}\left(  r\right)  \right\vert ^{2}dr\medskip\\
\quad+2\displaystyle\int_{0}^{t}\left\langle X_{\epsilon}\left(  r\right)
-X_{\tau}\left(  r\right)  ,\sigma_{\epsilon}\left(  r\right)  -\sigma_{\tau
}\left(  r\right)  \right\rangle dW\left(  r\right)  ,\;\forall t\in
\lbrack0,T],
\end{array}
\]
where $b_{\epsilon}$, $b_{\tau}$, $\sigma_{\epsilon}$ and $\sigma_{\tau}$ are
defined by (\ref{def of b,sigma eps}).

From (\ref{ineq Yosida}-$vi$) and Doob's inequality%
\[%
\begin{array}
[c]{l}%
\mathbb{E}\sup_{r\in\left[  0,t\right]  }\left\vert X_{\epsilon}\left(
r\right)  -X_{\tau}\left(  r\right)  \right\vert ^{2}\leq2\left(
\epsilon+\delta\right)  \mathbb{E}\displaystyle\int_{0}^{t}\left\vert
\nabla\varphi_{\epsilon}\left(  X_{\epsilon}\left(  r\right)  \right)
\right\vert \left\vert \nabla\varphi_{\tau}\left(  X_{\tau}\left(  r\right)
\right)  \right\vert dr\medskip\\
\quad+\left(  4\ell+3\ell^{2}\right)  \mathbb{E}\displaystyle\int_{0}%
^{t}\left\vert X_{\epsilon}\left(  r\right)  -X_{\tau}\left(  r\right)
\right\vert ^{2}dr+\left(  \ell+3\ell^{2}\right)  \mathbb{E}\int_{0}%
^{t}\left\vert Y_{\epsilon}\left(  r\right)  -Y_{\tau}\left(  r\right)
\right\vert ^{2}dr\medskip\\
\quad+\left(  \ell+3\ell^{2}\right)  \displaystyle\mathbb{E}\int_{0}%
^{t}\left\vert Z_{\epsilon}\left(  r\right)  -Z_{\tau}\left(  r\right)
\right\vert ^{2}dr+6\mathbb{E}\left[  \int_{0}^{t}\left\vert X_{\epsilon
}\left(  r\right)  -X_{\tau}\left(  r\right)  \right\vert ^{2}\left\vert
\sigma_{\epsilon}\left(  r\right)  -\sigma_{\tau}\left(  r\right)  \right\vert
^{2}dr\right]  ^{1/2}.
\end{array}
\]
But%
\begin{equation}%
\begin{array}
[c]{l}%
\mathbb{E}\displaystyle\int_{0}^{t}\left\vert Y_{\epsilon}\left(  r\right)
-Y_{\tau}\left(  r\right)  \right\vert ^{2}dr=\mathbb{E}\int_{0}^{t}\left\vert
\int_{-\delta}^{0}e^{\lambda u}X_{\epsilon}\left(  r+u\right)  -X_{\tau
}\left(  r+u\right)  du\right\vert ^{2}dr\medskip\\
\leq\delta\mathbb{E}\displaystyle\int_{0}^{t}\left(  \int_{-\delta}%
^{0}\left\vert X_{\epsilon}\left(  r+u\right)  -X_{\tau}\left(  r+u\right)
\right\vert ^{2}du\right)  dr=\delta\mathbb{E}\int_{0}^{t}\left(
\int_{r-\delta}^{r}\left\vert X_{\epsilon}\left(  u\right)  -X_{\tau}\left(
u\right)  \right\vert ^{2}du\right)  dr\medskip\\
\leq\delta t~\mathbb{E}\displaystyle\int_{-\delta}^{t}\left\vert X_{\epsilon
}\left(  u\right)  -X_{\tau}\left(  u\right)  \right\vert ^{2}du=\delta
t~\mathbb{E}\int_{0}^{t}\left\vert X_{\epsilon}\left(  u\right)  -X_{\tau
}\left(  u\right)  \right\vert ^{2}du
\end{array}
\label{Cauchy property of Y}%
\end{equation}
and%
\[%
\begin{array}
[c]{l}%
\mathbb{E}\displaystyle\int_{0}^{t}\left\vert Z_{\epsilon}\left(  r\right)
-Z_{\tau}\left(  r\right)  \right\vert ^{2}dr=\mathbb{E}\int_{0}^{t}\left\vert
X_{\epsilon}\left(  r-\delta\right)  -X_{\tau}\left(  r-\delta\right)
\right\vert ^{2}dr=\mathbb{E}\int_{-\delta}^{t-\delta}\left\vert X_{\epsilon
}\left(  r\right)  -X_{\tau}\left(  r\right)  \right\vert ^{2}dr\medskip\\
\leq\mathbb{E}\displaystyle\int_{-\delta}^{t}\left\vert X_{\epsilon}\left(
r\right)  -X_{\tau}\left(  r\right)  \right\vert ^{2}dr=\mathbb{E}\int_{0}%
^{t}\left\vert X_{\epsilon}\left(  r\right)  -X_{\tau}\left(  r\right)
\right\vert ^{2}dr,
\end{array}
\]
therefore%
\[%
\begin{array}
[c]{l}%
\mathbb{E}\sup_{r\in\left[  0,t\right]  }\left\vert X_{\epsilon}\left(
r\right)  -X_{\tau}\left(  r\right)  \right\vert ^{2}\leq2\left(
\epsilon+\delta\right)  \mathbb{E}\displaystyle\int_{0}^{t}\left\vert
\nabla\varphi_{\epsilon}\left(  X_{\epsilon}\left(  r\right)  \right)
\right\vert \left\vert \nabla\varphi_{\tau}\left(  X_{\tau}\left(  r\right)
\right)  \right\vert dr\medskip\\
\quad+C\mathbb{E}\displaystyle\int_{0}^{t}\left\vert X_{\epsilon}\left(
r\right)  -X_{\tau}\left(  r\right)  \right\vert ^{2}dr+C\mathbb{E}\left[
\sup_{s\in\left[  0,t\right]  }\left\vert X_{\epsilon}\left(  s\right)
-X_{\tau}\left(  s\right)  \right\vert \left(  \int_{0}^{t}\left\vert
X_{\epsilon}\left(  r\right)  -X_{\tau}\left(  r\right)  \right\vert
^{2}dr\right)  ^{1/2}\right]  \medskip\\
\leq2\left(  \epsilon+\delta\right)  \mathbb{E}\displaystyle\int_{0}%
^{t}\left\vert \nabla\varphi_{\epsilon}\left(  X_{\epsilon}\left(  r\right)
\right)  \right\vert \left\vert \nabla\varphi_{\tau}\left(  X_{\tau}\left(
r\right)  \right)  \right\vert dr+C\mathbb{E}\displaystyle\int_{0}%
^{t}\left\vert X_{\epsilon}\left(  r\right)  -X_{\tau}\left(  r\right)
\right\vert ^{2}dr+\medskip\\
\quad+\displaystyle\frac{1}{2}\mathbb{E}\sup_{s\in\left[  0,t\right]
}\left\vert X_{\epsilon}\left(  s\right)  -X_{\tau}\left(  s\right)
\right\vert ^{2}.
\end{array}
\]
On the other hand, using two times H\"{o}lder's inequality,
(\ref{ineq bound 5}) and (\ref{ineq bound 3}),%
\[%
\begin{array}
[c]{l}%
2\left(  \epsilon+\delta\right)  \mathbb{E}\displaystyle\int_{0}^{t}\left\vert
\nabla\varphi_{\epsilon}\left(  X_{\epsilon}\left(  r\right)  \right)
\right\vert \left\vert \nabla\varphi_{\tau}\left(  X_{\tau}\left(  r\right)
\right)  \right\vert dr\medskip\\
\leq2\epsilon\mathbb{E}\displaystyle\left[  \sup_{s\in\left[  0,t\right]
}\left\vert \nabla\varphi_{\epsilon}\left(  X_{\epsilon}\left(  s\right)
\right)  \right\vert \int_{0}^{t}\left\vert \nabla\varphi_{\tau}\left(
X_{\tau}\left(  r\right)  \right)  \right\vert dr\right]  +2\delta
\mathbb{E}\left\vert \sup_{s\in\left[  0,t\right]  }\left\vert \nabla
\varphi_{\tau}\left(  X_{\tau}\left(  s\right)  \right)  \right\vert \int
_{0}^{t}\left\vert \nabla\varphi_{\epsilon}\left(  X_{\epsilon}\left(
r\right)  \right)  \right\vert dr\right\vert \medskip\\
\leq2\epsilon\displaystyle\left[  \mathbb{E}\sup_{s\in\left[  0,t\right]
}\left\vert \nabla\varphi_{\epsilon}\left(  X_{\epsilon}\left(  s\right)
\right)  \right\vert ^{4}\right]  ^{1/4}\left[  \mathbb{E}\left(  \int_{0}%
^{t}\left\vert \nabla\varphi_{\tau}\left(  X_{\tau}\left(  r\right)  \right)
\right\vert dr\right)  ^{2}\right]  ^{1/2}\medskip\\
\quad+2\delta\displaystyle\left[  \mathbb{E}\sup_{s\in\left[  0,t\right]
}\left\vert \nabla\varphi_{\delta}\left(  X_{\delta}\left(  s\right)  \right)
\right\vert ^{4}\right]  ^{1/4}\left[  \mathbb{E}\left(  \int_{0}%
^{t}\left\vert \nabla\varphi_{\delta}\left(  X_{\delta}\left(  r\right)
\right)  \right\vert dr\right)  ^{2}\right]  ^{1/2}\medskip\\
\leq C\big(\epsilon^{1/8}+\delta^{1/8}\big)\Gamma_{2}^{1/4},
\end{array}
\]
hence%
\[
\frac{1}{2}\mathbb{E}\sup_{s\in\left[  0,t\right]  }|X_{\epsilon}\left(
s\right)  -X_{\tau}\left(  s\right)  |^{2}\leq C\big(\epsilon^{1/8}%
+\delta^{1/8}\big)\Gamma_{2}^{1/4}+C\int_{0}^{t}\mathbb{E}\sup_{s\in\left[
0,r\right]  }\left\vert X_{\epsilon}\left(  s\right)  -X_{\tau}\left(
s\right)  \right\vert ^{2}dr.
\]
Gronwall's inequality yields%
\begin{equation}
\mathbb{E}\sup_{s\in\left[  0,t\right]  }|X_{\epsilon}\left(  s\right)
-X_{\tau}\left(  s\right)  |^{2}\leq C\big(\epsilon^{1/8}+\delta
^{1/8}\big)\Gamma_{2}^{1/4}~. \label{Cauchy property of X}%
\end{equation}
Using equation (\ref{SVI delayed approxim}) we can deduce the following
inequality quite easily:%
\[
\mathbb{E}\sup_{s\in\left[  0,t\right]  }|K_{\epsilon}\left(  s\right)
-K_{\tau}\left(  s\right)  |^{2}\leq C\big(\epsilon^{1/8}+\delta
^{1/8}\big)\Gamma_{2}^{1/4}~.
\]
\noindent\textrm{D.} \textit{Passing to the limit}

Taking into account the Cauchy property we deduce that there exist
$\lim_{\epsilon\rightarrow0}X_{\epsilon}=X$ and $\lim_{\epsilon\rightarrow
0}K_{\epsilon}=K$ with $X,K\in L_{ad}^{2}\left(  \Omega;C\left(  \left[
-\delta,T\right]  \right)  \right)  $. Moreover, from (\ref{ineq bound 4}) we
see that there exists $\epsilon_{n}\rightarrow0$ such that%
\[
K_{\epsilon_{n}}\rightharpoonup K\text{ weakly in }L_{ad}^{2}\left(
\Omega;\mathrm{BV}\left(  \left[  -\delta,T\right]  \right)  \right)
\]
and%
\[
\mathbb{E}\left(  \left\Vert K\right\Vert _{\mathrm{BV}\left(  \left[
-\delta,T\right]  \right)  }^{2}\right)  \leq C~\big[1+\mathbb{E}\left\Vert
\xi\right\Vert _{\mathcal{C}}^{4}\big].
\]
Passing to the limit in the approximate equation we obtain that $\left(
X,K\right)  $ satisfies (\ref{def sol of delay SVI 2}-$iv$).

From (\ref{ineq bound 5}) we have
\[
\mathbb{E}\sup_{s\in\left[  0,T\right]  }\left\vert X\left(  s\right)
-J_{\epsilon}\left(  X_{\epsilon}\left(  s\right)  \right)  \right\vert
^{4}\leq C\sqrt{\epsilon}\Gamma_{2}~.
\]
Moreover for $\forall z\in\mathbb{R}^{d}$,%
\[
\int_{t}^{\hat{t}}\left\langle \nabla\varphi_{\epsilon}\left(  X_{\epsilon
}\left(  r\right)  \right)  ,z-J_{\epsilon}\left(  X_{\epsilon}\left(
r\right)  \right)  \right\rangle dr+\int_{t}^{\hat{t}}\varphi\left(
J_{\epsilon}\left(  X_{\epsilon}\left(  r\right)  \right)  \right)
dr\leq\left(  \hat{t}-t\right)  \varphi\left(  z\right)  ,
\]
since we have (\ref{ineq Yosida}-$iii$). Passing to the limit we obtain
(\ref{def sol of delay SVI 2}-$vi$).\hfill

\section{The optimal problem}

The aim of this section is to prove that the value function satisfies the
dynamic programming principle and is a viscosity solution of a partial
differential equation of Hamilton-Jacobi-Bellman (HJB) type.

\subsection{Dynamic programming principle}

Let $\left(  s,\xi\right)  \in\lbrack0,T)\times\mathcal{C}\big(\left[
-\delta,0\right]  ;\overline{\mathrm{Dom}\left(  \varphi\right)  }\,\big)$ be
arbitrary but fixed, $\mathrm{U}\subset\mathbb{R}^{m}$ be a given compact set
of admissible control values and $u:\Omega\times\left[  s,T\right]
\rightarrow\mathrm{U}$ be the control process. As in \cite{yo-zh/99}, we
define the class $\mathcal{U}\left[  s,T\right]  $ of admissible control
strategies as the set of five-tuples $\left(  \Omega,\mathcal{F}%
,\mathbb{P},W,u\right)  $ such that: $\left(  \Omega,\mathcal{F}%
,\{\mathcal{F}_{t}^{s}\}_{t\geq s},\mathbb{P}\right)  $ is a complete
probability space; $\left\{  W\left(  t\right)  \right\}  _{t\geq s}$ is a
$n$-dimensional standard Brownian motion with $W\left(  s\right)  =0$ and
$\{\mathcal{F}_{t}^{s}\}_{t\geq s}$ is generated by the Brownian motion
augmented by the $\mathbb{P}$-null set in $\mathcal{F};$ the control process
$u:\Omega\times\left[  s,T\right]  \rightarrow\mathrm{U}$ is an $\{\mathcal{F}%
_{t}^{s}\}_{t\geq s}$-adapted process and satisfied that $\mathbb{E}%
\big[\int_{s}^{T}\left\vert f\left(  t,X(t),Y(t),u(t)\right)  \right\vert
dt+\left\vert h\left(  X(T),Y(T)\right)  \right\vert \big]<\infty$; equation
(\ref{SVI delayed}) admits a unique strong solution provided $\left(
s,\xi,u\right)  $.

We consider the following stochastic controlled system%
\begin{equation}
\left\{
\begin{array}
[c]{r}%
dX(t)+\partial\varphi\left(  X(t)\right)  dt\ni b\left(
t,X(t),Y(t),Z(t),u(t)\right)  dt+\sigma\left(  t,X(t),Y(t),Z(t),u(t)\right)
dW(t),\medskip\\
t\in(s,T],\medskip\\
\multicolumn{1}{l}{X(t)=\xi\left(  t-s\right)  ,\;t\in\left[  s-\delta
,s\right]  ,}%
\end{array}
\right.  \label{SVI delayed 5}%
\end{equation}
where%
\begin{equation}
Y(t):=\int_{-\delta}^{0}e^{\lambda r}X(t+r)dr,\quad Z(t):=X(t-\delta),
\label{def of Y,Z 2}%
\end{equation}
together with the cost functional%
\begin{equation}
J(s,\xi;u)=\mathbb{E}\big[\int_{s}^{T}f\left(  t,X^{s,\xi,u}(t),Y^{s,\xi
,u}(t),u(t)\right)  dt+h\left(  X^{s,\xi,u}(T),Y^{s,\xi,u}(T)\right)  \big].
\label{funct cost 2}%
\end{equation}
We define the associated value function as the infimum among all
$u\in\mathcal{U}\left[  s,T\right]  :$%
\begin{equation}
V\left(  s,\xi\right)  =\inf_{u\in\mathcal{U}\left[  s,T\right]  }J\left(
s,\xi;u\right)  ,\;\left(  s,\xi\right)  \in\lbrack0,T)\times\mathcal{C}%
\big(\left[  -\delta,0\right]  ;\overline{\mathrm{Dom}\left(  \varphi\right)
}\,\big). \label{funct value 2}%
\end{equation}

\begin{definition}
We say that the value function satisfies the dynamic programming principle
(DPP for short) if, for every $\left(  s,\xi\right)  \in\lbrack0,T)\times
\mathcal{C}\big(\left[  -\delta,0\right]  ;$ $\overline{\mathrm{Dom}\left(
\varphi\right)  }\,\big)$, it holds that%
\begin{equation}
V\left(  s,\xi\right)  =\inf_{u\in\mathcal{U}\left[  s,T\right]  }%
\mathbb{E}\big[\int_{s}^{\theta}f\left(  t,X^{s,\xi,u}(t),Y^{s,\xi
,u}(t),u(t)\right)  dt+V\left(  \theta,X^{s,\xi,u}\left(  \theta\right)
\right)  \big], \label{DPP}%
\end{equation}
for every stopping time $\theta\in\left[  s,T\right]  .$
\end{definition}

As it can be seen in the previous section, the following three assumptions
will be needed to ensure the existence of a solution $X^{s,\xi,u}$ for
(\ref{SVI delayed 5}):

\begin{itemize}
\item[$\mathrm{(H}_{1}\mathrm{)}$] The function $\varphi:\mathbb{R}%
^{d}\rightarrow(-\infty,+\infty]$ is convex and l.s.c. such that
$\mathrm{Int}\left(  \mathrm{Dom}\left(  \varphi\right)  \right)
\neq\emptyset$ and $\varphi(x)\geq\varphi\left(  0\right)  =0,\;\forall
\,x\in\mathbb{R}^{d}.$

\item[$\mathrm{(H}_{2}\mathrm{)}$] The initial path $\xi$ is $\mathcal{F}%
_{s}^{s}$-measurable such that%
\[
\xi\in L^{2}\big(\Omega;\mathcal{C}\big(\left[  -\delta,0\right]
;\overline{\mathrm{Dom}\left(  \varphi\right)  }\,\big)\big),\quad
\text{and}\quad\varphi\left(  \xi\left(  0\right)  \right)  \in L^{1}\left(
\Omega;\mathbb{R}^{d}\right)  .
\]

\item[$\mathrm{(H}_{3}\mathrm{)}$] The functions $b:\left[  0,T\right]
\times\mathbb{R}^{3d}\times\mathrm{U}\rightarrow\mathbb{R}^{d}$ and
$\sigma:\left[  0,T\right]  \times\mathbb{R}^{3d}\times\mathrm{U}%
\rightarrow\mathbb{R}^{d\times n}$ are continuous and there exist $\ell
,\kappa>0$ such that for all $t\in\lbrack0,T]$, $u\in U$ and $x,y,z,x^{\prime
},y^{\prime},z^{\prime}\in\mathbb{R}^{d}$,%
\begin{equation}%
\begin{array}
[c]{r}%
\left\vert b\left(  t,x,y,z,u\right)  -b\left(  t,x^{\prime},y^{\prime
},z^{\prime},u\right)  \right\vert +\left\vert \sigma\left(  t,x,y,z,u\right)
-\sigma\left(  t,x^{\prime},y^{\prime},z^{\prime},u\right)  \right\vert
\medskip\\
\leq\ell\left(  \left\vert x-x^{\prime}\right\vert +\left\vert y-y^{\prime
}\right\vert +\left\vert z-z^{\prime}\right\vert \right)  ,\medskip\\
\multicolumn{1}{l}{\left\vert b\left(  t,0,0,0,u\right)  \right\vert
+\left\vert \sigma\left(  t,0,0,0,u\right)  \right\vert \leq\kappa.}%
\end{array}
\label{differ of b, sigma 2}%
\end{equation}

\end{itemize}

\begin{theorem}
\label{main result 2}Under the assumptions $\mathrm{(H}_{1}-\mathrm{H}%
_{3}\mathrm{)}$, for any $\left(  s,\xi\right)  \in\lbrack0,T)\times
\mathcal{C}\big(\left[  -\delta,0\right]  ;\overline{\mathrm{Dom}\left(
\varphi\right)  }\,\big)$ and $u\in\mathcal{U}\left[  s,T\right]  $ there
exists a unique pair of processes $\left(  X,K\right)  =\left(  X^{s,\xi
,u},K^{s,\xi,u}\right)  $ which is the solution of stochastic variational
inequality with delay (\ref{SVI delayed 5}). In addition, for any $q\geq1$,
there exists some constants $C=C\left(  \ell,\kappa,\delta,T,q\right)  >0$ and
$C^{\,\prime}=C^{\,\prime}\left(  \ell,\kappa,\delta,T,q\right)  >0$ such
that, for any $\left(  s,\xi\right)  $, $(s^{\prime},\xi^{\prime})\in
\lbrack0,T)\times\mathcal{C}\big(\left[  -\delta,0\right]  ;\overline
{\mathrm{Dom}\left(  \varphi\right)  }\,\big)$,%
\begin{equation}%
\begin{array}
[c]{r}%
\displaystyle\mathbb{E}\sup_{r\in\left[  s,T\right]  }\left\vert X^{s,\xi
,u}\left(  r\right)  \right\vert ^{2q}+\mathbb{E}\sup_{r\in\left[  s,T\right]
}\left\vert K^{s,\xi,u}\left(  r\right)  \right\vert ^{2q}+\mathbb{E}%
\left\Vert K^{s,\xi,u}\right\Vert _{\mathrm{BV}\left(  \left[  -\delta
,T\right]  \right)  }^{q}\medskip\\
\displaystyle+\mathbb{E}\Big(\int_{s}^{T}\varphi\left(  X^{s,\xi,u}\left(
r\right)  \right)  dr\Big)^{q}\leq C\big[1+\left\Vert \xi\right\Vert
_{\mathcal{C}}^{2q}\big]
\end{array}
\label{bound for sol}%
\end{equation}
and%
\begin{equation}%
\begin{array}
[c]{r}%
\mathbb{E}\sup_{r\in\left[  s\wedge s^{\prime},t\right]  }|X^{s,\xi}\left(
r\right)  -X^{s^{\prime},\xi^{\prime}}\left(  r\right)  |^{2}+\mathbb{E}%
\sup_{r\in\left[  s\wedge s^{\prime},t\right]  }|K^{s,\xi}\left(  r\right)
-K^{s^{\prime},\xi^{\prime}}\left(  r\right)  |^{2}\medskip\\
\leq C^{\,\prime}\Big[\Gamma_{1}+\left\vert s-s^{\prime}\right\vert
\big(1+\left\Vert \xi\right\Vert _{\mathcal{C}}^{2}+||\xi^{\prime
}||_{\mathcal{C}}^{2}\big)\Big],
\end{array}
\label{differ of 2 solutions}%
\end{equation}
where%
\begin{equation}
\Gamma_{1}=||\xi-\xi^{\prime}||_{\mathcal{C}}^{2}+\int_{s^{\prime}-\delta
}^{s^{\prime}}\left\vert \xi^{\prime}\left(  r-s\right)  -\xi^{\prime}\left(
r-s^{\prime}\right)  \right\vert ^{2}dr \label{def gamma 1_2}%
\end{equation}
(see definition (\ref{def gamma 1})).
\end{theorem}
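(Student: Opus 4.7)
The plan is to reduce Theorem \ref{main result 2} to the results of Section 2, since for each fixed admissible control strategy $u \in \mathcal{U}[s,T]$ the tuple $\left(\Omega,\mathcal{F},\{\mathcal{F}_t^s\},\mathbb{P},W\right)$ carries a version of the framework of Section 2, and the coefficients
\[
\tilde{b}(t,x,y,z)(\omega) := b(t,x,y,z,u(t,\omega)), \quad \tilde{\sigma}(t,x,y,z)(\omega) := \sigma(t,x,y,z,u(t,\omega))
\]
are progressively measurable and, by assumption $\mathrm{(H}_{3}\mathrm{)}$ of Section 3, satisfy the Lipschitz condition (\ref{differ of b, sigma}) pathwise with the \emph{same} constants $\ell,\kappa$ uniformly in $u$. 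Consequently Theorem \ref{main result 1} applies to the triple $(\Omega, \tilde{b}, \tilde{\sigma})$ and delivers a unique solution $(X^{s,\xi,u},K^{s,\xi,u})$ of (\ref{SVI delayed 5}).

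For the continuous dependence estimate (\ref{differ of 2 solutions}), one simply repeats the argument of the previous proposition with the same control $u$ being used for both $(X^{s,\xi,u},K^{s,\xi,u})$ and $(X^{s',\xi',u},K^{s',\xi',u})$: since the Lipschitz constants of $\tilde{b}(\cdot),\tilde{\sigma}(\cdot)$ do not depend on $\omega$ or $u$, every step of the previous derivation (in particular (\ref{Ito for differ}), (\ref{Ito for differ 2}) and the subsequent Doob/Young/Gronwall machinery) goes through verbatim and produces (\ref{differ of 2 solutions}) with the same constant structure.

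The remaining point is to upgrade the $q=1$ bounds of Proposition \ref{a priori estimates} and Theorem \ref{main result 1} to arbitrary $q\ge 1$. For this I would apply It\^{o}'s formula to $|X(t)|^{2q}$ along the penalized solution $X_\epsilon$ of (\ref{SVI delayed approxim}): using $\langle X_\epsilon(r),\nabla\varphi_\epsilon(X_\epsilon(r))\rangle \geq 0$ (from (\ref{ineq Yosida 2}-ix)), the standard linear-growth bound on $\tilde{b},\tilde{\sigma}$, the delay computations (\ref{calc of Y})--(\ref{calc of Z}) applied to $Y_\epsilon,Z_\epsilon$, and the Burkholder--Davis--Gundy inequality in place of Doob, one obtains
\[
\mathbb{E}\sup_{r\in[s,t]}|X_\epsilon(r)|^{2q} \leq C\big(1+\mathbb{E}\|\xi\|_\mathcal{C}^{2q}\big) + C\int_s^t \mathbb{E}\sup_{u\in[s,r]}|X_\epsilon(u)|^{2q}\,dr,
\]
and Gronwall delivers the uniform bound in $\epsilon$. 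The higher moment bound on $\int_s^T\varphi(X(r))\,dr$ and on $\|K\|_\mathrm{BV}$ follow by combining (\ref{ineq Yosida}-vii) (which controls $\int|\nabla\varphi_\epsilon(X_\epsilon(r))|\,dr$ pointwise in $\omega$ by $|X_\epsilon|^2 + \int|\sigma_\epsilon|^2 + \int\langle X_\epsilon - u_0, b_\epsilon\rangle + $ martingale) with the $L^{2q}$ bound on $X_\epsilon$ and, once again, BDG. Passing to the limit $\epsilon\to 0$ via the Cauchy property already established in Section 2 and using the weak lower semicontinuity of the $L^q$ norms then transfers these bounds to $(X,K)$.

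The main obstacle is ensuring that the higher-moment estimates on $\|K\|_\mathrm{BV}$ can be closed uniformly in $\epsilon$: the estimate (\ref{ineq bound 3}) only controls the second moment of $\int|\nabla\varphi_\epsilon|$, so for $q>1$ one must replace the isometry step with a BDG inequality of the corresponding order applied to the martingale term in (\ref{Ito for approx 3}), which introduces terms of the form $\mathbb{E}[\sup|X_\epsilon - u_0|^{q} (\int|\sigma_\epsilon|^2 dr)^{q/2}]$; these are absorbed by $\mathbb{E}\sup_{r}|X_\epsilon(r)|^{2q}$ through Young's inequality and the $L^{2q}$ bound just obtained, closing the argument.
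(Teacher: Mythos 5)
Your proposal is correct and follows essentially the approach the paper itself intends: Theorem \ref{main result 2} is stated there without a separate proof, being treated as the direct extension of Proposition \ref{a priori estimates}, the continuous-dependence proposition and Theorem \ref{main result 1} to coefficients in which the control $u$ enters merely as a progressively measurable parameter, with Lipschitz and growth constants uniform in $u$ by $\mathrm{(H}_{3}\mathrm{)}$. Your additional BDG/Gronwall sketch for the $2q$-moments of $X$ and the $q$-th power of $\left\Vert K\right\Vert _{\mathrm{BV}}$ correctly supplies the one ingredient the paper leaves entirely implicit, with the minor caveat that Theorem \ref{main result 1} must be re-run (not merely cited) for $\omega$-dependent coefficients — which, as you indicate, its penalization proof permits verbatim.
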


\begin{remark}
\label{main result 2_remark}Using the above estimations and definition
(\ref{def of Y,Z 2}), it is easily to deduce that%
\begin{equation}
\mathbb{E}\sup_{r\in\left[  s,T\right]  }|Y^{s,\xi,u}\left(  r\right)
|^{2q}\leq C\big[1+\left\Vert \xi\right\Vert _{\mathcal{C}}^{2q}%
\big] \label{bound for sol 2}%
\end{equation}
and%
\begin{equation}
\mathbb{E}\sup_{r\in\left[  s\wedge s^{\prime},t\right]  }|Y^{s,\xi}\left(
r\right)  -Y^{s^{\prime},\xi^{\prime}}\left(  r\right)  |^{2}\leq C^{\prime
}\Big[\Gamma_{1}+\left\vert s-s^{\prime}\right\vert \big(1+\left\Vert
\xi\right\Vert _{\mathcal{C}}^{2}+||\xi^{\prime}||_{\mathcal{C}}%
^{2}\big)\Big]. \label{differ of 2 solutions 2}%
\end{equation}

\end{remark}

Under the next assumption the cost functional and the value function will be well-defined.

\begin{itemize}
\item[$\mathrm{(H}_{4}\mathrm{)}$] The functions $f:\left[  0,T\right]
\times\mathbb{R}^{2d}\times\mathrm{U}\rightarrow\mathbb{R}$, $h:\mathbb{R}%
^{2d}\rightarrow\mathbb{R}$ are continuous and there exists $\bar{\kappa}>0$
and $p\geq1$ such that for all $t\in\lbrack0,T]$, $u\in U$ and $x,y\in
\mathbb{R}^{d}$,%
\[
\left\vert f\left(  t,x,y,u\right)  \right\vert +\left\vert h\left(
x,y\right)  \right\vert \leq\bar{\kappa}\left(  1+\left\vert x\right\vert
^{p}+\left\vert y\right\vert ^{p}\right)  .
\]

\end{itemize}

In the sequel we will follows the techniques from \cite{za/12} in order to
give some basic properties of the value function (including the continuity).

\begin{proposition}
\label{properties of V}Let assumptions $\mathrm{(H}_{1}-\mathrm{H}%
_{4}\mathrm{)}$ be satisfied. Then there exists $C>0$ such that%
\begin{equation}
|V\left(  s,\xi\right)  |\leq C~\big[1+\left\Vert \xi\right\Vert
_{\mathcal{C}}^{p}\big],\;\forall\left(  s,\xi\right)  \in\left[  0,T\right]
\times\mathcal{C}\big(\left[  -\delta,0\right]  ;\overline{\mathrm{Dom}\left(
\varphi\right)  }\,\big) \label{V sublinear}%
\end{equation}
and%
\begin{equation}%
\begin{array}
[c]{l}%
\displaystyle|V\left(  s,\xi\right)  -V(s^{\prime},\xi^{\prime})|\leq
C\,\boldsymbol{\mu}_{f,h}\left(  \delta,M\right)  +C\big[1+\left\Vert
\xi\right\Vert _{\mathcal{C}}^{p}+\left\Vert \xi^{\prime}\right\Vert
_{\mathcal{C}}^{p}\big]\cdot\medskip\\
\displaystyle\quad\quad\quad\quad\quad\quad\quad\quad\quad\quad\bigg[\frac
{\Gamma_{1}^{1/2}+\left\vert s-s^{\prime}\right\vert ^{1/2}\big(1+\left\Vert
\xi\right\Vert _{\mathcal{C}}+||\xi^{\prime}||_{\mathcal{C}}\big)}{\delta
}+\frac{1+\left\Vert \xi\right\Vert _{\mathcal{C}}+\left\Vert \xi^{\prime
}\right\Vert _{\mathcal{C}}}{M}\bigg],\medskip\\
\multicolumn{1}{r}{\forall\left(  s,\xi\right)  ,(s^{\prime},\xi^{\prime}%
)\in\left[  0,T\right]  \times\mathcal{C}\big(\left[  -\delta,0\right]
;\overline{\mathrm{Dom}\left(  \varphi\right)  }\,\big)\;,}%
\end{array}
\label{V lipschitz}%
\end{equation}
where $\boldsymbol{\mu}_{f,h}\left(  \delta,M\right)  $ is the module of
continuity of $f$ and $h$,%
\[
\boldsymbol{\mu}_{f,h}\left(  \delta,M\right)  :=\sup
_{\substack{|x|+|x^{\prime}|+|y|+|y^{\prime}|\leq M\\|x-x^{\prime
}|+|y-y^{\prime}|\leq\delta\\\left(  t,u\right)  \in\left[  0,T\right]
\times\mathrm{U}}}\left\{  |f(t,x,y,u)-f(t,x^{\prime},y^{\prime}%
,u)|+|h(x,y)-h(x^{\prime},y^{\prime})|\right\}  ,\;\text{for }\delta,M>0
\]
and $\Gamma_{1}$ is defined in (\ref{def gamma 1_2}).
\end{proposition}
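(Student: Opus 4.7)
The plan is to derive both estimates from the pointwise inequality $|\inf_u A(u) - \inf_u B(u)| \leq \sup_u |A(u) - B(u)|$ combined with the a priori bounds of Theorem \ref{main result 2} and Remark \ref{main result 2_remark}. For (\ref{V sublinear}), I bound
\[
|J(s,\xi;u)| \leq \bar{\kappa}\, \mathbb{E}\Big[\int_s^T (1 + |X^{s,\xi,u}(t)|^p + |Y^{s,\xi,u}(t)|^p)\, dt + 1 + |X^{s,\xi,u}(T)|^p + |Y^{s,\xi,u}(T)|^p\Big]
\]
using assumption $\mathrm{(H}_4\mathrm{)}$; applying (\ref{bound for sol}) and (\ref{bound for sol 2}) with $2q = \max(p,2)$ controls each term by $C(1+\|\xi\|_{\mathcal{C}}^p)$ uniformly in $u$, and taking the infimum yields the claim.

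For (\ref{V lipschitz}), after suitable identification of admissible controls (extending any $u\in\mathcal{U}[s\vee s', T]$ by an arbitrary fixed value on $[s \wedge s', s \vee s']$ to produce an element of $\mathcal{U}[s\wedge s', T]$), it suffices to bound $\sup_u |J(s,\xi;u) - J(s',\xi';u)|$; the extra integration piece over the mismatched interval $[s\wedge s', s \vee s']$ contributes at most $C|s-s'|(1+\|\xi\|_{\mathcal{C}}^p + \|\xi'\|_{\mathcal{C}}^p)$, which is absorbed into the final bound. The main difficulty is that $f$ and $h$ are only continuous with polynomial growth, so a direct Lipschitz splitting is unavailable.

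Writing $\Delta X := X^{s,\xi,u} - X^{s',\xi',u}$ and $\Delta Y := Y^{s,\xi,u} - Y^{s',\xi',u}$, for free parameters $\delta, M > 0$ I introduce the good event
\[
A := \Big\{\sup_r(|\Delta X(r)| + |\Delta Y(r)|) \leq \delta\Big\} \cap \Big\{\sup_r \big(|X^{s,\xi,u}(r)| + |X^{s',\xi',u}(r)| + |Y^{s,\xi,u}(r)| + |Y^{s',\xi',u}(r)|\big) \leq M\Big\}.
\]
On $A$ the definition of $\boldsymbol{\mu}_{f,h}(\delta, M)$ gives pathwise $|f(t, X, Y, u) - f(t, X', Y', u)| \leq \boldsymbol{\mu}_{f,h}(\delta, M)$ and analogously for $h$, producing the first term $C\boldsymbol{\mu}_{f,h}(\delta, M)$. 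On $A^c$ the polynomial growth gives $|f - f'| + |h - h'| \leq 2\bar\kappa(1 + |X|^p + |X'|^p + |Y|^p + |Y'|^p)$, and Cauchy--Schwarz yields
\[
\mathbb{E}\big[\boldsymbol{1}_{A^c}(1 + |X|^p + |X'|^p + |Y|^p + |Y'|^p)\big] \leq \mathbb{P}(A^c)^{1/2}\cdot C(1 + \|\xi\|_{\mathcal{C}}^p + \|\xi'\|_{\mathcal{C}}^p),
\]
where the second factor uses (\ref{bound for sol})--(\ref{bound for sol 2}) at exponent $2p$.

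Finally, by Chebyshev's inequality in its $L^2$ form,
\[
\mathbb{P}(A^c)^{1/2} \leq \frac{\big(\mathbb{E}\sup_r(|\Delta X| + |\Delta Y|)^2\big)^{1/2}}{\delta} + \frac{\big(\mathbb{E}\sup_r(|X| + |X'| + |Y| + |Y'|)^2\big)^{1/2}}{M},
\]
and plugging in (\ref{differ of 2 solutions})--(\ref{differ of 2 solutions 2}), whose square roots decouple via $\sqrt{a+b} \leq \sqrt{a} + \sqrt{b}$ into $C[\Gamma_1^{1/2} + |s-s'|^{1/2}(1 + \|\xi\|_{\mathcal{C}} + \|\xi'\|_{\mathcal{C}})]$, together with (\ref{bound for sol})--(\ref{bound for sol 2}) giving $C(1 + \|\xi\|_{\mathcal{C}} + \|\xi'\|_{\mathcal{C}})$, reproduces exactly the two bracketed fractions in (\ref{V lipschitz}). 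The main technical obstacle is the $\delta$-$M$ bookkeeping: one must select the $L^2$ form of Chebyshev (so that $\mathbb{P}(A^c)^{1/2}$ absorbs only a single power of the difference estimate) and verify that the initial-time mismatch residual is subdominant relative to the principal $|s-s'|^{1/2}/\delta$ contribution.
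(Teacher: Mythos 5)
Your proposal is correct and follows essentially the same route as the paper: reduce the problem to bounding $\sup_{u}|J(s,\xi;u)-J(s',\xi';u)|$, split according to whether the trajectories are $M$-bounded and $\delta$-close, and combine the modulus of continuity $\boldsymbol{\mu}_{f,h}(\delta,M)$ with polynomial growth, Cauchy--Schwarz, Markov/Chebyshev, and the estimates of Theorem \ref{main result 2} and Remark \ref{main result 2_remark}, plus the $O(|s-s'|)$ mismatch term on $[s\wedge s', s\vee s']$. The only cosmetic differences are that you merge the paper's two bad events $A_2(r)$ (bounded but not close) and $A_3(r)$ (unbounded) into a single complement handled by a union bound before Chebyshev, and that you are more explicit than the paper about identifying the control classes $\mathcal{U}[s,T]$ and $\mathcal{U}[s',T]$ when $s\neq s'$.
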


\begin{proof}
Using the definition of the cost functional we see that%
\[%
\begin{array}
[c]{l}%
\left\vert J\left(  s,\xi;u\right)  -J(s^{\prime},\xi^{\prime};u)\right\vert
\medskip\\
\leq\mathbb{E}\Big|\displaystyle\int_{s}^{T}f\left(  r\right)  dr+h\left(
X\left(  T\right)  ,Y\left(  T\right)  \right)  -\int_{s^{\prime}}%
^{T}f^{\prime}\left(  r\right)  dr-h(X^{\prime}\left(  T\right)  ,Y^{\prime
}\left(  T\right)  )\Big|\medskip\\
\leq\mathbb{E}\Big|\displaystyle\int_{s}^{T}f\left(  r\right)  dr-\int
_{s^{\prime}}^{T}f^{\prime}\left(  r\right)  dr\Big|+\mathbb{E}\big|h\left(
X\left(  T\right)  ,Y\left(  T\right)  \right)  -h(X^{\prime}\left(  T\right)
,Y^{\prime}\left(  T\right)  )\big|,
\end{array}
\]
where%
\[
X=X^{s,\xi,u}\,,~Y=Y^{s,\xi,u}\,,~X^{\prime}=X^{s^{\prime},\xi^{\prime}%
,u}\,,~Y^{\prime}=Y^{s^{\prime},\xi^{\prime},u}%
\]
and%
\[
f\left(  r\right)  =f\left(  r,X\left(  r\right)  ,Y\left(  r\right)
,u\left(  r\right)  \right)  \quad\text{and}\quad f^{\prime}\left(  r\right)
=f(r,X^{\prime}\left(  r\right)  ,Y^{\prime}\left(  r\right)  ,u\left(
r\right)  ).
\]
Next, we let $0\leq s^{\prime}\leq s\leq T.$ We have%
\[%
\begin{array}
[c]{l}%
\displaystyle\left\vert J\left(  s,\xi;u\right)  -J(s^{\prime},\xi^{\prime
};u)\right\vert \leq\mathbb{E}\int_{s}^{T}|f\left(  r\right)  -f^{\prime
}\left(  r\right)  |dr+\mathbb{E}\int_{s^{\prime}}^{s}|f^{\prime}\left(
r\right)  |dr\medskip\\
\quad+\mathbb{E}|h\left(  X\left(  T\right)  ,Y\left(  T\right)  \right)
-h(X^{\prime}\left(  T\right)  ,Y^{\prime}\left(  T\right)  )|\medskip\\
\displaystyle\leq\int_{s}^{T}\mathbb{E}\left[  |f\left(  r\right)  -f^{\prime
}\left(  r\right)  |\left(  \mathbf{1}_{A_{1}\left(  r\right)  }%
+\mathbf{1}_{A_{2}\left(  r\right)  }+\mathbf{1}_{A_{3}\left(  r\right)
}\right)  \right]  dr+\int_{s^{\prime}}^{s}\mathbb{E}|f^{\prime}\left(
r\right)  |dr\medskip\\
\displaystyle\quad+\mathbb{E}\left[  |h\left(  X\left(  T\right)  ,Y\left(
T\right)  \right)  -h(X^{\prime}\left(  T\right)  ,Y^{\prime}\left(  T\right)
)|\left(  \mathbf{1}_{A_{1}\left(  T\right)  }+\mathbf{1}_{A_{2}\left(
T\right)  }+\mathbf{1}_{A_{3}\left(  T\right)  }\right)  \right]  ,
\end{array}
\]
where%
\[%
\begin{array}
[c]{l}%
A_{1}\left(  r\right)  :=\left\{  \omega:|X\left(  r\right)  |+|X^{\prime
}\left(  r\right)  |+|Y\left(  r\right)  |+|Y^{\prime}\left(  r\right)  |\leq
M,\;|X\left(  r\right)  -X^{\prime}\left(  r\right)  |+|Y\left(  r\right)
-Y^{\prime}\left(  r\right)  |\leq\delta\right\}  ,\medskip\\
A_{2}\left(  r\right)  :=\left\{  \omega:|X\left(  r\right)  |+|X^{\prime
}\left(  r\right)  |+|Y\left(  r\right)  |+|Y^{\prime}\left(  r\right)  |\leq
M,\;|X\left(  r\right)  -X^{\prime}\left(  r\right)  |+|Y\left(  r\right)
-Y^{\prime}\left(  r\right)  |>\delta\right\}  ,\medskip\\
A_{3}\left(  r\right)  :=\left\{  \omega:|X\left(  r\right)  |+|X^{\prime
}\left(  r\right)  |+|Y\left(  r\right)  |+|Y^{\prime}\left(  r\right)
|>M\right\}  .
\end{array}
\]
Using assumption $\left(  \mathrm{H}_{4}\right)  $ on $f$ and $h$ and Markov's
inequality we obtain the following estimations:

\begin{itemize}
\item[$\left(  a\right)  $]
\[%
\begin{array}
[c]{l}%
\displaystyle\int_{s}^{T}\mathbb{E}\left[  |f\left(  r\right)  -f^{\prime
}\left(  r\right)  |\mathbf{1}_{A_{1}\left(  r\right)  }\right]
dr+\mathbb{E}\left[  |h\left(  X\left(  T\right)  ,Y\left(  T\right)  \right)
-h(X^{\prime}\left(  T\right)  ,Y^{\prime}\left(  T\right)  )|\mathbf{1}%
_{A_{1}\left(  T\right)  }\right]  \medskip\\
\displaystyle\leq C\,\boldsymbol{\mu}_{f,h}\left(  \delta,M\right)  ;
\end{array}
\]

\item[$\left(  b\right)  $]
\[%
\begin{array}
[c]{l}%
\displaystyle\int_{s}^{T}\mathbb{E}\left[  |f\left(  r\right)  -f^{\prime
}\left(  r\right)  |\mathbf{1}_{A_{2}\left(  r\right)  }\right]
dr+\mathbb{E}\left[  |h\left(  X\left(  T\right)  ,Y\left(  T\right)  \right)
-h(X^{\prime}\left(  T\right)  ,Y^{\prime}\left(  T\right)  )|\mathbf{1}%
_{A_{2}\left(  T\right)  }\right]  \medskip\\
\displaystyle\leq\int_{s}^{T}\left[  \mathbb{E}|f\left(  r\right)  -f^{\prime
}\left(  r\right)  |^{2}\right]  ^{1/2}\left[  \mathbb{E}\mathbf{1}%
_{A_{2}\left(  r\right)  }\right]  ^{1/2}dr\medskip\\
\displaystyle\quad+\left[  \mathbb{E}|h\left(  X\left(  T\right)  ,Y\left(
T\right)  \right)  -h(X^{\prime}\left(  T\right)  ,Y^{\prime}\left(  T\right)
)|^{2}\right]  ^{1/2}\left[  \mathbb{E}\mathbf{1}_{A_{2}\left(  T\right)
}\right]  ^{1/2}\medskip\\
\displaystyle\leq\sqrt{2}\int_{s}^{T}\left[  \mathbb{E}|f\left(  r\right)
|^{2}+\mathbb{E}|f^{\prime}\left(  r\right)  |^{2}\right]  ^{1/2}\left[
\mathbb{P}A_{2}\left(  r\right)  \right]  ^{1/2}dr\medskip\\
\displaystyle\quad+\sqrt{2}\left[  \mathbb{E}|h\left(  X\left(  T\right)
,Y\left(  T\right)  \right)  |^{2}+\mathbb{E}|h(X^{\prime}\left(  T\right)
,Y^{\prime}\left(  T\right)  )|^{2}\right]  ^{1/2}\left[  \mathbb{P}%
A_{2}\left(  T\right)  \right]  ^{1/2}\medskip\\
\displaystyle\leq C\Big[1\mathbb{+E}\sup_{r\in\left[  s,T\right]
}\big(|X\left(  r\right)  |^{2p}+|Y\left(  r\right)  |^{2p}\mathbb{+}%
|X^{\prime}\left(  r\right)  |^{2p}\mathbb{+}|Y^{\prime}\left(  r\right)
|^{2p}\big)\Big]^{1/2}\cdot\medskip\\
\displaystyle\quad\bigg[\frac{\mathbb{E}\sup_{r\in\left[  s,T\right]
}\big(|X\left(  r\right)  -X^{\prime}\left(  r\right)  |^{2}+|Y\left(
r\right)  -Y^{\prime}\left(  r\right)  |^{2}\big)}{\delta^{2}}\bigg]^{1/2};
\end{array}
\]

\item[$\left(  c\right)  $]
\[%
\begin{array}
[c]{l}%
\displaystyle\int_{s}^{T}\mathbb{E}\left[  |f\left(  r\right)  -f^{\prime
}\left(  r\right)  |\mathbf{1}_{A_{3}\left(  r\right)  }\right]
dr+\mathbb{E}\left[  |h\left(  X\left(  T\right)  ,Y\left(  T\right)  \right)
-h(X^{\prime}\left(  T\right)  ,Y^{\prime}\left(  T\right)  )|\mathbf{1}%
_{A_{3}\left(  T\right)  }\right]  \medskip\\
\displaystyle\leq\int_{s}^{T}\left[  \mathbb{E}|f\left(  r\right)  -f^{\prime
}\left(  r\right)  |^{2}\right]  ^{1/2}\left[  \mathbb{E}\mathbf{1}%
_{A_{3}\left(  r\right)  }\right]  ^{1/2}dr\medskip\\
\displaystyle\quad+\left[  \mathbb{E}|h\left(  X\left(  T\right)  ,Y\left(
T\right)  \right)  -h(X^{\prime}\left(  T\right)  ,Y^{\prime}\left(  T\right)
)|^{2}\right]  ^{1/2}\left[  \mathbb{E}\mathbf{1}_{A_{3}\left(  T\right)
}\right]  ^{1/2}\medskip\\
\displaystyle\leq\sqrt{2}\int_{s}^{T}\left[  \mathbb{E}|f\left(  r\right)
|^{2}+\mathbb{E}|f^{\prime}\left(  r\right)  |^{2}\right]  ^{1/2}\left[
\mathbb{P}A_{3}\left(  r\right)  \right]  ^{1/2}dr\medskip\\
\displaystyle\quad+\sqrt{2}\left[  \mathbb{E}|h\left(  X\left(  T\right)
,Y\left(  T\right)  \right)  |^{2}+\mathbb{E}|h(X^{\prime}\left(  T\right)
,Y^{\prime}\left(  T\right)  )|^{2}\right]  ^{1/2}\left[  \mathbb{P}%
A_{3}\left(  T\right)  \right]  ^{1/2}\medskip\\
\displaystyle\leq C\Big[1\mathbb{+E}\sup_{r\in\left[  s,T\right]
}\big(|X\left(  r\right)  |^{2p}+|Y\left(  r\right)  |^{2p}\mathbb{+}%
|X^{\prime}\left(  r\right)  |^{2p}\mathbb{+}|Y^{\prime}\left(  r\right)
|^{2p}\big)\Big]^{1/2}\cdot\medskip\\
\displaystyle\quad\bigg[\frac{\mathbb{E}\sup_{r\in\left[  s,T\right]
}\big(|X\left(  r\right)  |^{2}+|Y\left(  r\right)  |^{2}+|X^{\prime}\left(
r\right)  |^{2}+|Y^{\prime}\left(  r\right)  |^{2}\big)}{M^{2}}\bigg]^{1/2};
\end{array}
\]

\item[$\left(  d\right)  $] and%
\[
\int_{s^{\prime}}^{s}\mathbb{E}|f^{\prime}\left(  r\right)  |dr\leq\bar
{\kappa}\left(  s-s^{\prime}\right)  \big(1+\mathbb{E}\sup_{r\in\lbrack
s^{\prime},s]}|X\left(  r\right)  |^{p}+\mathbb{E}\sup_{r\in\lbrack s^{\prime
},s]}|Y\left(  r\right)  |^{p}\Big).
\]

\end{itemize}

From the four inequalities hereabove, Theorem \ref{main result 2} and Remark
\ref{main result 2_remark}, we obtain%
\[%
\begin{array}
[c]{l}%
\displaystyle\left\vert J\left(  s,\xi;u\right)  -J(s^{\prime},\xi^{\prime
};u)\right\vert \leq C\,\boldsymbol{\mu}_{f,h}\left(  \delta,M\right)
+C\left(  s-s^{\prime}\right)  \big[1+\left\Vert \xi\right\Vert _{\mathcal{C}%
}^{p}\big]+C\big[1+\left\Vert \xi\right\Vert _{\mathcal{C}}^{p}+\left\Vert
\xi^{\prime}\right\Vert _{\mathcal{C}}^{p}\big]\cdot\medskip\\
\displaystyle\quad\quad\quad\quad\quad\quad\quad\quad\quad\quad\quad
\quad\bigg[\frac{\Gamma_{1}^{1/2}+\left\vert s-s^{\prime}\right\vert
^{1/2}\big(1+\left\Vert \xi\right\Vert _{\mathcal{C}}+||\xi^{\prime
}||_{\mathcal{C}}\big)}{\delta}+\frac{1+\left\Vert \xi\right\Vert
_{\mathcal{C}}+\left\Vert \xi^{\prime}\right\Vert _{\mathcal{C}}}%
{M}\bigg]\medskip\\
\displaystyle\leq C\,\boldsymbol{\mu}_{f,h}\left(  \delta,M\right)
+C\big[1+\left\Vert \xi\right\Vert _{\mathcal{C}}^{p}+\left\Vert \xi^{\prime
}\right\Vert _{\mathcal{C}}^{p}\big]\cdot\bigg[\frac{\Gamma_{1}^{1/2}%
+\left\vert s-s^{\prime}\right\vert ^{1/2}\big(1+\left\Vert \xi\right\Vert
_{\mathcal{C}}+||\xi^{\prime}||_{\mathcal{C}}\big)}{\delta}+\frac{1+\left\Vert
\xi\right\Vert _{\mathcal{C}}+\left\Vert \xi^{\prime}\right\Vert
_{\mathcal{C}}}{M}\bigg]
\end{array}
\]
(whenever $\delta$ and $\left\vert s^{\prime}-s\right\vert $ are from $\left(
0,1\right)  $).

The conclusion (\ref{V lipschitz}) follows now, since%
\[%
\begin{array}
[c]{l}%
|V\left(  s,\xi\right)  -V(s^{\prime},\xi^{\prime})|=|\sup_{u\in
\mathcal{U}\left[  s,T\right]  }(-J(s^{\prime},\xi^{\prime};u))-\sup
_{u\in\mathcal{U}\left[  s,T\right]  }\left(  -J\left(  s,\xi;u\right)
\right)  |\medskip\\
\leq|\sup_{u\in\mathcal{U}\left[  s,T\right]  }(J\left(  s,\xi;u\right)
-J(s^{\prime},\xi^{\prime};u))|\leq\sup_{u\in\mathcal{U}\left[  s,T\right]
}|(J\left(  s,\xi;u\right)  -J(s^{\prime},\xi^{\prime};u))|.
\end{array}
\]
The computations for inequality (\ref{V sublinear}) one uses the polynomial
growth of $f$ and $h$, inequalities (\ref{bound for sol}) and
(\ref{bound for sol 2}) and thus conclusion (\ref{V sublinear}) follows
easier.\hfill\bigskip
\end{proof}

In order to show that $V$ satisfies the DPP, we consider, for $\epsilon>0$,
the penalized equation:%
\begin{equation}
\left\{
\begin{array}
[c]{l}%
dX_{\epsilon}\left(  t\right)  +\nabla\varphi_{\epsilon}\left(  X_{\epsilon
}\left(  t\right)  \right)  dt=b\left(  t,X_{\epsilon}\left(  t\right)
,Y_{\epsilon}\left(  t\right)  ,Z_{\epsilon}\left(  t\right)  ,u\left(
t\right)  \right)  dt\medskip\\
\quad\quad\quad\quad\quad\quad\quad\quad\quad\quad\quad\quad+\sigma\left(
t,X_{\epsilon}\left(  t\right)  ,Y_{\epsilon}\left(  t\right)  ,Z_{\epsilon
}\left(  t\right)  ,u\left(  t\right)  \right)  dW(t),~t\in(s,T],\medskip\\
X_{\epsilon}\left(  t\right)  =\xi\left(  t-s\right)  ,\;t\in\left[
s-\delta,s\right]  ,
\end{array}
\right.  \label{SVI delayed approxim 2}%
\end{equation}
where%
\begin{equation}
Y_{\epsilon}(t):=\int_{-\delta}^{0}e^{\lambda r}X_{\epsilon}(t+r)dr,\quad
Z_{\epsilon}(t):=X_{\epsilon}(t-\delta) \label{def of Y,Z eps 2}%
\end{equation}
and we take the penalized value functions associated%
\begin{equation}%
\begin{array}
[c]{r}%
V_{\epsilon}\left(  s,\xi\right)  =\inf\nolimits_{u\in\mathcal{U}\left[
s,T\right]  }\mathbb{E}\big[\displaystyle\int_{s}^{T}f(t,X_{\epsilon}%
^{s,\xi,u}(t),Y_{\epsilon}^{s,\xi,u}(t),u(t))\,dt+h(X_{\epsilon}^{s,\xi
,u}(T),Y_{\epsilon}^{s,\xi,u}(T))\big],\medskip\\
\left(  s,\xi\right)  \in\lbrack0,T)\times\mathcal{C}\big(\left[
-\delta,0\right]  ;\overline{\mathrm{Dom}\left(  \varphi\right)  }\,\big).
\end{array}
\label{funct value 3}%
\end{equation}

\begin{remark}
Inequalities (\ref{V sublinear}) and (\ref{V lipschitz}) hold true for the
penalized value function $V_{\epsilon}$.
\end{remark}

The following result is a fairly straightforward generalization of Theorem 4.2
from \cite{la/02} to the case of $f$ and $h$ satisfying sublinear growth
(instead of lipschitzianity):

\begin{lemma}
\label{theorem 4.2}Let assumptions $\mathrm{(H}_{1}-\mathrm{H}_{4}\mathrm{)}$
be satisfied. If $X_{\epsilon}^{s,\xi,u}$ is the solution of
(\ref{SVI delayed approxim 2}), then, for every $\left(  s,\xi\right)
\in\lbrack0,T)\times\mathcal{C}\big(\left[  -\delta,0\right]  ;$
$\overline{\mathrm{Dom}\left(  \varphi\right)  }\,\big)$, it holds that%
\begin{equation}
V_{\epsilon}\left(  s,\xi\right)  =\inf_{u\in\mathcal{U}\left[  s,T\right]
}\mathbb{E}\big[\int_{s}^{\tau}f(r,X_{\epsilon}^{s,\xi,u}(r),Y_{\epsilon
}^{s,\xi,u}(r),u(r))dr+V_{\epsilon}(\tau,X_{\epsilon}^{s,\xi,u}\left(
\tau\right)  )\big], \label{DPP 2}%
\end{equation}
for every stopping time $\tau\in\left[  s,T\right]  .$
\end{lemma}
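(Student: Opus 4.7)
The plan is to adapt the classical dynamic programming argument to the penalized delay SDE (\ref{SVI delayed approxim 2}), extending Larssen's Theorem 4.2 from \cite{la/02} to the case of sublinearly growing $f,h$ by relying on the continuity and polynomial growth of $V_{\epsilon}$ (which, as remarked just before the lemma, satisfy the same estimates (\ref{V sublinear})–(\ref{V lipschitz}) as $V$). The central tool is that, since $\nabla\varphi_{\epsilon}$ is $(1/\epsilon)$-Lipschitz, equation (\ref{SVI delayed approxim 2}) has Lipschitz coefficients and so admits a unique strong solution enjoying the flow property
\[
X_{\epsilon}^{s,\xi,u}(r)=X_{\epsilon}^{\tau,\bar{X}_{\epsilon}^{s,\xi,u}(\tau),u}(r),\quad r\in[\tau,T],\ \mathbb{P}\text{-a.s.},
\]
where $\bar{X}_{\epsilon}^{s,\xi,u}(\tau)\in\mathcal{C}([-\delta,0];\overline{\mathrm{Dom}(\varphi)})$ denotes the random segment $r\mapsto X_{\epsilon}^{s,\xi,u}(\tau+r)$.

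First I would prove the easy inequality $V_{\epsilon}(s,\xi)\geq\inf_{u}\mathbb{E}[\int_{s}^{\tau}f\,dr+V_{\epsilon}(\tau,\bar{X}_{\epsilon}^{s,\xi,u}(\tau))]$. For arbitrary $u\in\mathcal{U}[s,T]$, split the cost using the tower property,
\[
J(s,\xi;u)=\mathbb{E}\int_{s}^{\tau}f(r,X_{\epsilon}(r),Y_{\epsilon}(r),u(r))\,dr+\mathbb{E}\Big[\mathbb{E}\big[\textstyle\int_{\tau}^{T}f\,dr+h\,\big|\,\mathcal{F}_{\tau}^{s}\big]\Big].
\]
By the flow property and a regular conditional probability argument, the inner conditional expectation equals $J(\tau,\bar{X}_{\epsilon}^{s,\xi,u}(\tau);u_{|[\tau,T]})\geq V_{\epsilon}(\tau,\bar{X}_{\epsilon}^{s,\xi,u}(\tau))$; taking the infimum over $u$ gives the desired bound.

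For the reverse inequality I would use a concatenation argument. Fix $\eta>0$ and, using the continuity estimate (\ref{V lipschitz}) for $V_{\epsilon}$, cover the (separable) space of admissible initial segments by countably many small balls of $\mathcal{C}$-diameter at most $\eta$; in each ball pick a representative $\xi_{k}$ and an $\eta$-optimal control $u_{k}^{\eta}\in\mathcal{U}[\tau,T]$ for the initial data $(\tau,\xi_{k})$. For $u\in\mathcal{U}[s,T]$ arbitrary, glue $u$ on $[s,\tau]$ with the selection $u_{k}^{\eta}$ corresponding to the (random) ball containing $\bar{X}_{\epsilon}^{s,\xi,u}(\tau)$, producing a $\{\mathcal{F}_{t}^{s}\}$-adapted control $\tilde{u}\in\mathcal{U}[s,T]$. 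By the flow property, the $\eta$-optimality, and again (\ref{V lipschitz}),
\[
V_{\epsilon}(s,\xi)\leq J(s,\xi;\tilde{u})\leq\mathbb{E}\int_{s}^{\tau}f(r,X_{\epsilon}(r),Y_{\epsilon}(r),u(r))\,dr+\mathbb{E}V_{\epsilon}(\tau,\bar{X}_{\epsilon}^{s,\xi,u}(\tau))+C\eta,
\]
where the $C\eta$ absorbs both the $\eta$-optimality gap and the $\mathcal{C}$-continuity error, the latter being integrable by (\ref{bound for sol}). Taking the infimum over $u$ and letting $\eta\downarrow0$ yields (\ref{DPP 2}).

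The main obstacle will be the measurable selection / concatenation step in the weak admissibility setup of \cite{yo-zh/99}: strictly, each $u_{k}^{\eta}$ lives on its own probability space, so one must either transfer everything to the canonical Wiener space (where $\{\mathcal{F}_{t}^{s}\}$ is the augmented Brownian filtration, as required by the definition of $\mathcal{U}[s,T]$) or invoke a regular conditional probability argument to guarantee that the glued control is jointly measurable in $(\omega,\bar{X}_{\epsilon}^{s,\xi,u}(\tau)(\omega))$ and $\{\mathcal{F}_{t}^{\tau}\}$-adapted after time $\tau$. Once this is handled exactly as in \cite{la/02}, the remainder is routine, because the sublinear-growth generalization only forces us to replace the Lipschitz estimates of $J$ in \cite{la/02} with the modulus of continuity $\boldsymbol{\mu}_{f,h}$ together with the uniform moment bounds (\ref{bound for sol})–(\ref{bound for sol 2}) that survive the penalization.
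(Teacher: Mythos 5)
Your proposal is correct and takes essentially the same route as the paper: the paper's proof is a one-line reduction to Larssen's Theorem 4.2 (applicable because $\nabla\varphi_{\epsilon}$ is Lipschitz, so (\ref{SVI delayed approxim 2}) has Lipschitz coefficients), with Larssen's Lemma 4.1 replaced by the continuity estimate (\ref{V lipschitz}) written for $V_{\epsilon}$ — precisely the two-inequality, flow-property, partition-and-pasting scheme you spell out. Your only simplification is writing the gluing error as ``$+C\eta$'' when it actually carries the modulus-of-continuity form of (\ref{V lipschitz}) (the ``slight change of inequality (4.18)'' the paper alludes to), but you acknowledge this correction yourself in the final paragraph.
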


\begin{proof}
Since the stochastic controlled equation (\ref{SVI delayed approxim 2}) has
Lipschitz coefficients, we can use the proof of Theorem 4.2 with Lemma 4.1
replaced by inequality (\ref{V lipschitz}), written for $V_{\epsilon}$ (and
therefore a slight change of inequality (4.18) will appear).\hfill
\end{proof}

\begin{proposition}
Let assumptions $\mathrm{(H}_{1}-\mathrm{H}_{4}\mathrm{)}$ be satisfied. Then
there exists $C>0$ such that%
\begin{equation}%
\begin{array}
[c]{r}%
\displaystyle|V_{\epsilon}\left(  s,\xi\right)  -V(s,\xi)|\leq
C\,\boldsymbol{\mu}_{f,h}\left(  \delta,M\right)  +C\big[1+\left\Vert
\xi\right\Vert _{\mathcal{C}}^{p}\big](1+\varphi^{1/4}\left(  \xi\left(
0\right)  \right)  +\left\Vert \xi\right\Vert _{\mathcal{C}})\bigg[\frac
{\epsilon^{1/16}}{\delta}+\frac{1}{M}\bigg],\medskip\\
\forall\left(  s,\xi\right)  \in\left[  0,T\right]  \times\mathcal{C}%
\big(\left[  -\delta,0\right]  ;\overline{\mathrm{Dom}\left(  \varphi\right)
}\,\big)
\end{array}
\label{V approxim}%
\end{equation}
where%
\begin{equation}
\Gamma_{2}=1+\varphi^{2}\left(  \xi\left(  0\right)  \right)  +\left\Vert
\xi\right\Vert _{\mathcal{C}}^{4} \label{ineq bound 5_2}%
\end{equation}
(see definition (\ref{ineq bound 5})).
\end{proposition}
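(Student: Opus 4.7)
The plan is to adapt the comparison argument of Proposition \ref{properties of V} to the situation where the two processes being compared share the initial data $(s,\xi)$ and the control $u$, but one solves the SVI (\ref{SVI delayed 5}) and the other solves the penalized equation (\ref{SVI delayed approxim 2}). Since both $V_\epsilon$ and $V$ are infima over the same class $\mathcal{U}[s,T]$, we start from
\[
|V_\epsilon(s,\xi) - V(s,\xi)| \leq \sup_{u\in\mathcal{U}[s,T]}\,|J_\epsilon(s,\xi;u) - J(s,\xi;u)|,
\]
where $J_\epsilon$ is the cost functional computed along $(X_\epsilon^{s,\xi,u},Y_\epsilon^{s,\xi,u})$. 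The key analytic input is the Cauchy-type estimate (\ref{Cauchy property of X}) from the existence proof, which in the controlled setting gives
\[
\mathbb{E}\sup_{r\in[s,T]} |X_\epsilon^{s,\xi,u}(r) - X^{s,\xi,u}(r)|^2 \leq C\epsilon^{1/8}\Gamma_2^{1/4},
\]
and, by the computation (\ref{Cauchy property of Y}), the analogous bound for $\mathbb{E}\sup|Y_\epsilon - Y|^2$.

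Next I would split the integrand of $J_\epsilon - J$ according to the three events
\begin{align*}
A_1(r) &= \{|X|+|X_\epsilon|+|Y|+|Y_\epsilon|\leq M,\ |X-X_\epsilon|+|Y-Y_\epsilon|\leq\delta\},\\
A_2(r) &= \{|X|+|X_\epsilon|+|Y|+|Y_\epsilon|\leq M,\ |X-X_\epsilon|+|Y-Y_\epsilon|>\delta\},\\
A_3(r) &= \{|X|+|X_\epsilon|+|Y|+|Y_\epsilon|>M\},
\end{align*}
exactly mirroring steps $(a)$--$(c)$ in the proof of Proposition \ref{properties of V}. On $A_1$ the definition of $\boldsymbol{\mu}_{f,h}(\delta,M)$ yields a contribution of order $C\,\boldsymbol{\mu}_{f,h}(\delta,M)$, producing the first term in (\ref{V approxim}). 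On $A_2$ and $A_3$ one applies Cauchy--Schwarz together with the polynomial growth in $(\mathrm{H}_4)$ and the uniform-in-$\epsilon$ moment bounds coming from (\ref{bound for sol}), (\ref{bound for sol 2}) and (\ref{ineq bound}); Markov's inequality then delivers
\[
[\mathbb{P}(A_2(r))]^{1/2} \leq \delta^{-1}\Bigl[\mathbb{E}\sup_{r}\bigl(|X-X_\epsilon|^2+|Y-Y_\epsilon|^2\bigr)\Bigr]^{1/2} \leq C\delta^{-1}\epsilon^{1/16}\Gamma_2^{1/8},
\]
\[
[\mathbb{P}(A_3(r))]^{1/2} \leq M^{-1}\Bigl[\mathbb{E}\sup_{r}\bigl(|X|+|X_\epsilon|+|Y|+|Y_\epsilon|\bigr)^2\Bigr]^{1/2} \leq CM^{-1}(1+\|\xi\|_\mathcal{C}).
\]
Combining these three pieces and invoking the elementary subadditivity inequality $\Gamma_2^{1/8} \leq C(1+\varphi^{1/4}(\xi(0))+\|\xi\|_\mathcal{C})$, multiplied by the $C(1+\|\xi\|_\mathcal{C}^p)$ factor furnished by the sublinear growth of $f$ and $h$, reproduces exactly the right-hand side of (\ref{V approxim}).

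The step I expect to be the main obstacle is verifying that the Cauchy bound (\ref{Cauchy property of X}) and the fourth-moment estimate for $\nabla\varphi_\epsilon(X_\epsilon)$ in (\ref{ineq bound 5}), both proved in Section 2 for the uncontrolled equation, remain valid uniformly in $u\in\mathcal{U}[s,T]$; this is the content responsible for the $\epsilon^{1/16}$ rate. Because the Lipschitz and growth constants in $(\mathrm{H}_3)$ are control-independent and $\nabla\varphi_\epsilon$ does not interact with $u$, the It\^o--Gronwall computations of Part A--C of the existence proof transfer verbatim, but this point must be stated explicitly. Once the uniform-in-$u$ version of the Cauchy estimate is in hand, the remaining splitting into $A_1, A_2, A_3$ is routine and follows the template of Proposition \ref{properties of V}.
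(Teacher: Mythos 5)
Your proposal is correct and follows essentially the same route as the paper: the limit of the Cauchy estimate (\ref{Cauchy property of X}) giving $\mathbb{E}\sup_{r}|X_{\epsilon}-X|^{2}\leq C\epsilon^{1/8}\Gamma_{2}^{1/4}$, the splitting into the events $A_{1},A_{2},A_{3}$ exactly as in Proposition \ref{properties of V}, Markov's inequality with the moment bounds, and the elementary estimate $\Gamma_{2}^{1/8}\leq C(1+\varphi^{1/4}(\xi(0))+\left\Vert \xi\right\Vert _{\mathcal{C}})$ to reach (\ref{V approxim}). Your explicit remark that the Section 2 estimates transfer uniformly in $u\in\mathcal{U}[s,T]$ (because the constants in $(\mathrm{H}_{3})$ are control-independent) is a point the paper leaves implicit, and stating it is a welcome clarification rather than a deviation.
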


\begin{proof}
Passing to the limit in (\ref{Cauchy property of X}) we deduce that%
\begin{equation}
\mathbb{E}\sup_{r\in\left[  s,T\right]  }|X_{\epsilon}\left(  r\right)
-X\left(  r\right)  |^{2}\leq C\epsilon^{1/8}\Gamma_{2}^{1/4}
\label{approx of X}%
\end{equation}
and, using the same calculus type as in the proof of Proposition
\ref{properties of V} (see also (\ref{Cauchy property of Y})), we obtain%
\[%
\begin{array}
[c]{l}%
\displaystyle\left\vert J_{\epsilon}\left(  s,\xi;u\right)  -J(s,\xi
;u)\right\vert \leq C\,\boldsymbol{\mu}_{f,h}\left(  \delta,M\right)
+C\big[1+\left\Vert \xi\right\Vert _{\mathcal{C}}^{p}\big]\cdot\bigg[\frac
{\epsilon^{1/16}\Gamma_{2}^{1/8}}{\delta}+\frac{1+\left\Vert \xi\right\Vert
_{\mathcal{C}}}{M}\bigg]\medskip\\
\displaystyle\leq C\,\boldsymbol{\mu}_{f,h}\left(  \delta,M\right)
+C\big[1+\left\Vert \xi\right\Vert _{\mathcal{C}}^{p}\big]\cdot\bigg[\frac
{\epsilon^{1/16}(1+\varphi^{1/4}\left(  \xi\left(  0\right)  \right)
+\left\Vert \xi\right\Vert _{\mathcal{C}}^{1/2})}{\delta}+\frac{1+\left\Vert
\xi\right\Vert _{\mathcal{C}}}{M}\bigg]
\end{array}
\]
and, using Young's inequality, the conclusion follows.\hfill$\medskip$
\end{proof}

Using, mainly, inequalities (\ref{V lipschitz}) and (\ref{V approxim}) we can
be prove that

\begin{lemma}
\label{technical lemma}Function $V_{\epsilon}$ is uniformly convergent on
compacts to the value function $V$ on $\left[  0,T\right]  \times
\mathcal{C}\big(\left[  -\delta,0\right]  ;\overline{\mathrm{Dom}\left(
\varphi\right)  }\,\big).$
\end{lemma}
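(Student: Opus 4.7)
The plan is to fix an arbitrary compact set $\mathcal{K}\subset\left[0,T\right]\times\mathcal{C}\big(\left[-\delta,0\right];\overline{\mathrm{Dom}\left(\varphi\right)}\,\big)$ and establish $\sup_{\mathcal{K}}|V_{\epsilon}-V|\to 0$ via a triangle-inequality argument that combines two ingredients: the pointwise-type convergence furnished by (\ref{V approxim}) when $\varphi\left(\xi\left(0\right)\right)$ is finite, and a common modulus of continuity supplied by (\ref{V lipschitz}) simultaneously for $V$ and for every $V_{\epsilon}$ (by the preceding remark the same estimate, with constants independent of $\epsilon$, applies to each $V_{\epsilon}$).

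First, compactness of $\mathcal{K}$ gives $R:=\sup_{(s,\xi)\in\mathcal{K}}\left\Vert \xi\right\Vert _{\mathcal{C}}<\infty$ and, by Arzel\`a--Ascoli, equicontinuity of the path family $\{\xi:(s,\xi)\in\mathcal{K}\}$; hence the quantity $\Gamma_{1}$ appearing in (\ref{V lipschitz}) tends to $0$ uniformly on $\mathcal{K}$ as $\left(s',\xi'\right)\to\left(s,\xi\right)$. Selecting successively $M$ large, then $\delta$ small, and finally $\left(s',\xi'\right)$ close to $\left(s,\xi\right)$ turns (\ref{V lipschitz}) into a single modulus of continuity valid for $V$ and for all $V_{\epsilon}$ on $\mathcal{K}$.

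The main obstruction is that the right-hand side of (\ref{V approxim}) carries the factor $\varphi^{1/4}\left(\xi\left(0\right)\right)$, which need not be bounded on the compact projection $\{\xi(0):(s,\xi)\in\mathcal{K}\}$ since $\varphi$ is only lower semicontinuous on $\overline{\mathrm{Dom}(\varphi)}$. To circumvent this, I would use the standing hypothesis $0\in\mathrm{Int}\left(\mathrm{Dom}\left(\varphi\right)\right)$ and introduce, for $\theta\in(0,1)$, the regularized path $\xi^{\theta}(r):=(1-\theta)\xi(r)$. Convexity of $\overline{\mathrm{Dom}\left(\varphi\right)}$ ensures $\xi^{\theta}\in\mathcal{C}\big([-\delta,0];\overline{\mathrm{Dom}(\varphi)}\,\big)$, and the standard convex-analytic fact that the convex combination of an interior point and a point of the closure lies in the interior gives $\xi^{\theta}(r)\in\mathrm{Int}\left(\mathrm{Dom}\left(\varphi\right)\right)$ for every $r$. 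Consequently $\mathcal{K}_{0}^{\theta}:=\{\xi^{\theta}(0):(s,\xi)\in\mathcal{K}\}$ is a compact subset of the open set $\mathrm{Int}\left(\mathrm{Dom}\left(\varphi\right)\right)$; since proper convex l.s.c. functions are continuous on the interior of their domain, $M_{\theta}:=\sup_{\mathcal{K}_{0}^{\theta}}\varphi<\infty$.

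Closing the argument is then routine. Given $\eta>0$: (i) pick $\theta$ so small that $\left\Vert \xi-\xi^{\theta}\right\Vert _{\mathcal{C}}\leq\theta R$ falls within the modulus from step two, yielding $|V\left(s,\xi\right)-V(s,\xi^{\theta})|+\sup_{\epsilon}|V_{\epsilon}\left(s,\xi\right)-V_{\epsilon}(s,\xi^{\theta})|<2\eta/3$ uniformly on $\mathcal{K}$; (ii) with $\theta$ (and hence $M_{\theta}$) now fixed, apply (\ref{V approxim}) at $\left(s,\xi^{\theta}\right)$: its right-hand side is controlled by $R$ and $M_{\theta}^{1/4}$ only, so choosing $M$ large, then $\delta$ small, and finally $\epsilon$ small in turn renders it $<\eta/3$ uniformly in $(s,\xi)\in\mathcal{K}$; (iii) the triangle inequality then gives $\sup_{\mathcal{K}}|V_{\epsilon}-V|<\eta$ for every sufficiently small $\epsilon$. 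The only real difficulty is the one already flagged --- the $\varphi(\xi(0))$ factor in (\ref{V approxim}) --- and it is precisely the interior-approximation $\xi\mapsto\xi^{\theta}$, combined with the $\epsilon$-independent modulus of continuity from (\ref{V lipschitz}), that allows pointwise convergence to be upgraded to uniform convergence on arbitrary compacta.
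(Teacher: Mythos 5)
Your proof is correct and follows essentially the same route as the paper's: the same three-term triangle inequality, with (\ref{V lipschitz}) (valid for $V$ and, uniformly in $\epsilon$, for each $V_{\epsilon}$) controlling the two outer terms and (\ref{V approxim}) controlling the middle term at an interior-valued approximating path. Your explicit construction $\xi^{\theta}=(1-\theta)\xi$ and the bound $M_{\theta}=\sup_{\mathcal{K}_{0}^{\theta}}\varphi<\infty$ simply make precise what the paper dispatches with ``it is easy to see'', namely the existence of a nearby path with values in $\mathrm{Int}\left(\mathrm{Dom}\left(\varphi\right)\right)$ on which the factor $\varphi^{1/4}\left(\xi^{\prime}\left(0\right)\right)$ appearing in (\ref{V approxim}) stays uniformly bounded.
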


\begin{proof}
Let $\mathcal{K}$ be a compact nonempty subset of the convex domain
$\overline{\mathrm{Dom}\left(  \varphi\right)  }$ and $\eta>0$ be a arbitrary
small constant. We denote by $\mathcal{K}_{\eta}$ the $\eta$-interior of
$\mathcal{K}:$%
\[
\mathcal{K}_{\eta}=\big\{x\in\overline{\mathrm{Dom}\left(  \varphi\right)
}:\mathrm{dist}\left(  x,\mathrm{Bd}\left(  \mathrm{Dom}\left(  \varphi
\right)  \right)  \right)  >\eta\big\}.
\]
If we consider $\xi\in\mathcal{C}\left(  \left[  -\delta,0\right]
;\mathcal{K}\right)  $ then it is easy to see that there exists a function
$\xi^{\prime}\in\mathcal{C}\big(\left[  -\delta,0\right]  ;\mathrm{Int}\left(
\mathrm{Dom}\left(  \varphi\right)  \right)  \big)$ such that%
\[
\left\Vert \xi-\xi^{\prime}\right\Vert _{\mathcal{C}}\leq\eta.
\]
Now, from (\ref{V lipschitz}) and (\ref{V approxim}),%
\[%
\begin{array}
[c]{l}%
\displaystyle\left\vert V_{\epsilon}\left(  s,\xi\right)  -V\left(
s,\xi\right)  \right\vert \leq\left\vert V_{\epsilon}\left(  s,\xi\right)
-V_{\epsilon}\left(  s,\xi^{\prime}\right)  \right\vert +\left\vert
V_{\epsilon}\left(  s,\xi^{\prime}\right)  -V\left(  s,\xi^{\prime}\right)
\right\vert +\left\vert V\left(  s,\xi^{\prime}\right)  -V\left(
s,\xi\right)  \right\vert \medskip\\
\displaystyle\leq2C\,\boldsymbol{\mu}_{f,h}\left(  \delta,M\right)
+2C\big[1+\left\Vert \xi\right\Vert _{\mathcal{C}}^{p}+\left\Vert \xi^{\prime
}\right\Vert _{\mathcal{C}}^{p}\big]\bigg[\frac{\Gamma_{1}^{1/2}+\left\vert
s-s^{\prime}\right\vert ^{1/2}\big(1+\left\Vert \xi\right\Vert _{\mathcal{C}%
}+||\xi^{\prime}||_{\mathcal{C}}\big)}{\delta}+\frac{1+\left\Vert
\xi\right\Vert _{\mathcal{C}}+\left\Vert \xi^{\prime}\right\Vert
_{\mathcal{C}}}{M}\bigg]\medskip\\
\displaystyle\quad+C\,\boldsymbol{\mu}_{f,h}\left(  \bar{\delta},\bar
{M}\right)  +C\big[1+\left\Vert \xi^{\prime}\right\Vert _{\mathcal{C}}%
^{p}\big](1+\varphi^{1/4}\left(  \xi^{\prime}\left(  0\right)  \right)
+\left\Vert \xi^{\prime}\right\Vert _{\mathcal{C}})\Big[\frac{\epsilon^{1/16}%
}{\delta}+\frac{1}{M}\Big].
\end{array}
\]
Therefore we can chose $\delta>0$ and $M,\bar{M}>0$ such that%
\[
\limsup_{\epsilon\rightarrow0}\sup_{\left[  0,T\right]  \times\mathcal{C}%
\left(  \left[  -\delta,0\right]  ;\mathcal{K}\right)  }\left\vert
V_{\epsilon}\left(  s,\xi\right)  -V\left(  s,\xi\right)  \right\vert \leq
C\eta,~\text{for all }\eta>0
\]
and the conclusion follows.\hfill
\end{proof}

The main result of this section is the following:

\begin{proposition}
Under the assumptions $\mathrm{(H}_{1}-\mathrm{H}_{4}\mathrm{)}$ the value
function $V$ satisfies the DPP.
\end{proposition}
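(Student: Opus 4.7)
The plan is to pass to the limit $\epsilon\to0$ in the penalized dynamic programming identity (\ref{DPP 2}) provided by Lemma~\ref{theorem 4.2}, using the uniform-on-compacts convergence $V_{\epsilon}\to V$ from Lemma~\ref{technical lemma} together with the $L^{2}$-convergence $X_{\epsilon}\to X$ supplied by (\ref{approx of X}). Fix $\left(s,\xi\right)$ and a stopping time $\tau\in\lbrack s,T]$, and for each $u\in\mathcal{U}[s,T]$ set
\[
J_{\epsilon}^{\tau}(s,\xi;u):=\mathbb{E}\Big[\int_{s}^{\tau}f\left(r,X_{\epsilon}^{s,\xi,u}(r),Y_{\epsilon}^{s,\xi,u}(r),u(r)\right)dr+V_{\epsilon}\big(\tau,X_{\epsilon}^{s,\xi,u}(\tau+\cdot)\big)\Big],
\]
and denote by $J^{\tau}(s,\xi;u)$ the analogous quantity with $X,Y,V$ in place of $X_{\epsilon},Y_{\epsilon},V_{\epsilon}$; here $X^{s,\xi,u}(\tau+\cdot)$ is the shifted path $r\mapsto X^{s,\xi,u}(\tau+r)$, $r\in\lbrack-\delta,0]$ (using the convention $X^{s,\xi,u}(r)=\xi(r-s)$ for $r\in\lbrack s-\delta,s]$ whenever $\tau-\delta<s$).

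The central step I would carry out is to show that
\[
\sup_{u\in\mathcal{U}[s,T]}\big|J_{\epsilon}^{\tau}(s,\xi;u)-J^{\tau}(s,\xi;u)\big|\longrightarrow0\quad\text{as }\epsilon\to0,
\]
with a rate depending only on $\|\xi\|_{\mathcal{C}}$ and $\varphi(\xi(0))$. For the integral term I would re-run the three-region argument $(a)$--$(d)$ from the proof of Proposition~\ref{properties of V}, applied this time to the difference $f(r,X_{\epsilon},Y_{\epsilon},u)-f(r,X,Y,u)$: continuity and polynomial growth of $f$ from $\mathrm{(H}_{4}\mathrm{)}$, combined with the $u$-uniform moment estimates (\ref{bound for sol})--(\ref{bound for sol 2}) from Theorem~\ref{main result 2}, the convergence (\ref{approx of X}), and the analogous control of $Y_{\epsilon}-Y$ derived as in (\ref{Cauchy property of Y}), supply the required uniform rate. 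For the terminal $V$-term I would decompose
\[
V_{\epsilon}(\tau,X_{\epsilon}(\tau+\cdot))-V(\tau,X(\tau+\cdot))=\big[V_{\epsilon}-V\big]\big(\tau,X_{\epsilon}(\tau+\cdot)\big)+\big[V(\tau,X_{\epsilon}(\tau+\cdot))-V(\tau,X(\tau+\cdot))\big];
\]
the second bracket is controlled via the H\"{o}lder-type estimate (\ref{V lipschitz}) together with (\ref{approx of X}), while the first bracket requires Lemma~\ref{technical lemma} after a truncation forcing the random path $X_{\epsilon}(\tau+\cdot)$ into a compact subset of $\overline{\mathrm{Dom}(\varphi)}$, the complementary tail being absorbed by Markov's inequality combined with the sublinear bound (\ref{V sublinear}) and the moment estimates (\ref{bound for sol}).

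Once uniform convergence of $J_{\epsilon}^{\tau}(\,\cdot\,;u)$ to $J^{\tau}(\,\cdot\,;u)$ is established, taking the infimum over $u$ and invoking Lemma~\ref{theorem 4.2} together with Lemma~\ref{technical lemma} will yield
\[
V(s,\xi)=\lim_{\epsilon\to0}V_{\epsilon}(s,\xi)=\lim_{\epsilon\to0}\inf_{u}J_{\epsilon}^{\tau}(s,\xi;u)=\inf_{u}J^{\tau}(s,\xi;u),
\]
which is exactly (\ref{DPP}). The main obstacle I anticipate is the terminal-term analysis: $V$ is only of polynomial growth (not bounded) and its argument is a path-valued random variable, so the uniform-on-compacts convergence of $V_{\epsilon}$ must be coupled with a careful truncation that is uniform in the control $u$, in order to justify interchanging the limit with the infimum and to transfer the DPP from $V_{\epsilon}$ to $V$.
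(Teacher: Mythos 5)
Your proposal is correct and follows essentially the paper's own route: pass to the limit in the penalized DPP of Lemma \ref{theorem 4.2}, handling the integral term by re-running the truncation argument of Proposition \ref{properties of V}, and the terminal term by splitting it into a $V_{\epsilon}-V$ difference (controlled by Lemma \ref{technical lemma} after truncating the random path into a compact subset of $\overline{\mathrm{Dom}\left(\varphi\right)}$, with the tail absorbed by Markov's inequality, (\ref{V sublinear}) and (\ref{bound for sol})) plus a continuity-of-the-value-function difference controlled by (\ref{V lipschitz}) and (\ref{approx of X}), all uniformly in $u$ --- the paper performs exactly these steps, only with the mirror-image splitting (it changes the argument inside $V_{\epsilon}$ and compares $V_{\epsilon}$ with $V$ at the limit path $X^{s,\xi,u}(\tau)$) and with your interchange of limit and infimum written out as the two inequalities (\ref{DPP 4}) and (\ref{DPP 5}) via $\eta$-optimal controls. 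One caveat: your error rates depend on $\varphi\left(\xi\left(0\right)\right)$, so as written the argument only covers initial paths with $\varphi\left(\xi\left(0\right)\right)<+\infty$; like the paper, which first restricts to $\mathrm{Dom}(\varphi)$-valued paths and then extends the two inequalities to all of $\mathcal{C}\big(\left[-\delta,0\right];\overline{\mathrm{Dom}\left(\varphi\right)}\,\big)$ by a final continuity/density step, you would still need to add that extension to reach the full statement.
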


\begin{proof}
From Lemma \ref{theorem 4.2} we see that $V_{\epsilon}$ satisfies the DPP
(\ref{DPP 2}). Let now $\left(  s,\xi\right)  \in\lbrack0,T)\times
\mathcal{C}\big(\left[  -\delta,0\right]  ;\mathrm{Dom}(\varphi)\big)$ (hence
$\varphi\left(  \xi\left(  r\right)  \right)  <+\infty$) be arbitrary but
fixed. We have, for every $\epsilon>0$, $u\in\mathcal{U}\left[  s,T\right]  $
and any stopping time $\tau\in\left[  s,T\right]  $ and $M>0$,%
\begin{equation}%
\begin{array}
[c]{l}%
\mathbb{E}\left\vert V_{\epsilon}(\tau,X_{\epsilon}^{s,\xi,u}(\tau
))-V(\tau,X^{s,\xi,u}(\tau))\right\vert \medskip\\
\leq\mathbb{E}\left\vert V_{\epsilon}(\tau,X_{\epsilon}^{s,\xi,u}%
(\tau))-V_{\epsilon}(\tau,X^{s,\xi,u}(\tau))\right\vert +\mathbb{E}\left[
\left\vert V_{\epsilon}(\tau,X^{s,\xi,u}(\tau))-V(\tau,X^{s,\xi,u}%
(\tau))\right\vert \left(  \mathbf{1}_{A_{1}}+\mathbf{1}_{A_{2}}\right)
\right]  \medskip\\
\leq\sup\limits_{\left(  t,y\right)  \in\mathrm{B}}\mathbb{E}\left\vert
V_{\epsilon}(t,y)-V(t,y)\right\vert +\mathbb{E}\left\vert V_{\epsilon}%
(\tau,X_{\epsilon}^{s,\xi,u}(\tau))-V_{\epsilon}(\tau,X^{s,\xi,u}%
(\tau))\right\vert \medskip\\
\quad+\mathbb{E}\left[  \left\vert V_{\epsilon}(\tau,X^{s,\xi,u}(\tau
))-V(\tau,X^{s,\xi,u}(\tau))\right\vert \mathbf{1}_{A_{2}}\right]  ,
\end{array}
\label{V approxim 2}%
\end{equation}
where%
\[
A_{1}:=\left\{  \omega:|X^{s,\xi,u}\left(  \tau\right)  |\leq M\right\}
~,~~A_{2}:=\left\{  \omega:|X^{s,\xi,u}\left(  \tau\right)  |>M\right\}
\]
and%
\[
\mathrm{B}:=\left[  0,T\right]  \times\mathcal{C}\big(\left[  -\delta
,0\right]  ;\overline{\mathcal{B}\left(  0,M\right)  }\cap\overline
{\mathrm{Dom}\left(  \varphi\right)  }\,\big).
\]
In order to obtain an estimate for the term $\mathbb{E}\left\vert V_{\epsilon
}(\tau,X_{\epsilon}^{s,\xi,u}(\tau))-V_{\epsilon}(\tau,X^{s,\xi,u}%
(\tau))\right\vert $ we do similar computations as in proof of Proposition
\ref{properties of V}%
\begin{equation}%
\begin{array}
[c]{l}%
|(J_{\epsilon}(\tau,X_{\epsilon}^{s,\xi,u}(\tau);u)-J_{\epsilon}(\tau
,X^{s,\xi,u}(\tau);u))|\medskip\\
\leq\displaystyle\int_{\tau}^{T}\mathbb{E}|f_{\epsilon}^{1}\left(  r\right)
-f_{\epsilon}^{2}\left(  r\right)  |dr+\mathbb{E}|h(X_{\epsilon}^{1}\left(
T\right)  ,Y_{\epsilon}^{1}\left(  T\right)  )-h\left(  X_{\epsilon}%
^{2}\left(  T\right)  ,Y_{\epsilon}^{2}\left(  T\right)  \right)  |\medskip\\
\displaystyle\leq C\,\boldsymbol{\mu}_{f,h}\left(  \delta,M\right)
+C\Big[1\mathbb{+E}\sup_{r\in\left[  \tau,T\right]  }\big(|X_{\epsilon}%
^{1}\left(  r\right)  |^{2p}+|Y_{\epsilon}^{1}\left(  r\right)  |^{2p}%
\mathbb{+}|X_{\epsilon}^{2}\left(  r\right)  |^{2p}\mathbb{+}|Y_{\epsilon}%
^{2}\left(  r\right)  |^{2p}\big)\Big]^{1/2}\cdot\medskip\\
\displaystyle\quad\bigg[\frac{\left(  \mathbb{E}\sup_{r\in\left[
\tau,T\right]  }(|X_{\epsilon}^{1}\left(  r\right)  -X_{\epsilon}^{2}\left(
r\right)  |^{2}+|Y_{\epsilon}^{1}\left(  r\right)  -Y_{\epsilon}^{2}\left(
r\right)  |^{2})\right)  ^{1/2}}{\delta}\medskip\\
\displaystyle\quad\quad\quad\quad\quad\quad\quad\quad\quad\quad+\frac{\left(
\mathbb{E}\sup_{r\in\left[  \tau,T\right]  }(|X_{\epsilon}^{1}\left(
r\right)  |^{2}+|Y_{\epsilon}^{1}\left(  r\right)  |^{2}+|X_{\epsilon}%
^{2}\left(  r\right)  |^{2}+|Y_{\epsilon}^{2}\left(  r\right)  |^{2})\right)
^{1/2}}{M}\bigg],
\end{array}
\label{differ of 2 J_eps}%
\end{equation}
where%
\begin{align*}
X_{\epsilon}^{1}\left(  r\right)   &  :=X_{\epsilon}^{\tau,X_{\epsilon}%
^{s,\xi,u}\left(  \tau\right)  ,u}\left(  r\right)  \,,\quad X_{\epsilon}%
^{2}\left(  r\right)  :=X_{\epsilon}^{\tau,X^{s,\xi,u}\left(  \tau\right)
,u}\left(  r\right)  ,\\
Y_{\epsilon}^{1}\left(  r\right)   &  :=Y_{\epsilon}^{\tau,X_{\epsilon}%
^{s,\xi,u}\left(  \tau\right)  ,u}\left(  r\right)  \,,\quad Y_{\epsilon}%
^{2}\left(  r\right)  :=Y_{\epsilon}^{\tau,X^{s,\xi,u}\left(  \tau\right)
,u}\left(  r\right)
\end{align*}
and%
\[
f_{\epsilon}^{1}\left(  r\right)  =f(r,X_{\epsilon}^{1}\left(  r\right)
,Y_{\epsilon}^{1}\left(  r\right)  ,u\left(  r\right)  )\,,\quad f_{\epsilon
}^{2}\left(  r\right)  =f(r,X_{\epsilon}^{2}\left(  r\right)  ,Y_{\epsilon
}^{2}\left(  r\right)  ,u\left(  r\right)  ).
\]
Since%
\[%
\begin{array}
[c]{l}%
\mathbb{E}\sup_{r\in\left[  \tau,T\right]  }(|X_{\epsilon}^{1}\left(
r\right)  |^{2p}+|Y_{\epsilon}^{1}\left(  r\right)  |^{2p}+|X_{\epsilon}%
^{2}\left(  r\right)  |^{2p}+|Y_{\epsilon}^{2}\left(  r\right)  |^{2p}%
)\medskip\\
\leq C\big(1+\mathbb{E}\sup_{r\in\left[  \tau-\delta,\tau\right]
}|X_{\epsilon}^{s,\xi,u}\left(  r\right)  |^{2p}+\mathbb{E}\sup_{r\in\left[
\tau-\delta,\tau\right]  }|X^{s,\xi,u}\left(  r\right)  |^{2p}\big)\leq
C\big(1+\left\Vert \xi\right\Vert _{\mathcal{C}}^{2p}\big)
\end{array}
\]
and%
\[%
\begin{array}
[c]{l}%
\mathbb{E}\sup_{r\in\left[  \tau,T\right]  }(|X_{\epsilon}^{1}\left(
r\right)  -X_{\epsilon}^{2}\left(  r\right)  |^{2}+|Y_{\epsilon}^{1}\left(
r\right)  -Y_{\epsilon}^{2}\left(  r\right)  |^{2})\medskip\\
\leq C\mathbb{E}\sup_{r\in\left[  \tau-\delta,\tau\right]  }|X_{\epsilon
}^{s,\xi,u}\left(  r\right)  -X^{s,\xi,u}\left(  r\right)  |^{2}\leq
C\epsilon^{1/8}\Gamma_{2}^{1/4},
\end{array}
\]
we obtain%
\[%
\begin{array}
[c]{l}%
|(J_{\epsilon}(\tau,X_{\epsilon}^{s,\xi,u}(\tau);u)-J_{\epsilon}(\tau
,X^{s,\xi,u}(\tau);u))|\medskip\\
\displaystyle\leq C\,\boldsymbol{\mu}_{f,h}\left(  \delta,M\right)
+C\big[1+\left\Vert \xi\right\Vert _{\mathcal{C}}^{p}\big](1+\varphi
^{1/4}\left(  \xi\left(  0\right)  \right)  +\left\Vert \xi\right\Vert
_{\mathcal{C}})\bigg[\frac{\epsilon^{1/16}}{\delta}+\frac{1}{M}\bigg].
\end{array}
\]
Hence%
\[%
\begin{array}
[c]{l}%
\mathbb{E}\left\vert V_{\epsilon}(\tau,X_{\epsilon}^{s,\xi,u}(\tau
))-V_{\epsilon}(\tau,X^{s,\xi,u}(\tau))\right\vert \leq\sup_{u\in
\mathcal{U}\left[  s,T\right]  }\mathbb{E}|(J_{\epsilon}(\tau,X_{\epsilon
}^{s,\xi,u}(\tau);u)-J_{\epsilon}(\tau,X^{s,\xi,u}(\tau);u))|\medskip\\
\displaystyle\leq C\,\boldsymbol{\mu}_{f,h}\left(  \delta,M\right)
+C\big[1+\left\Vert \xi\right\Vert _{\mathcal{C}}^{p}\big](1+\varphi
^{1/4}\left(  \xi\left(  0\right)  \right)  +\left\Vert \xi\right\Vert
_{\mathcal{C}})\Big[\frac{\epsilon^{1/16}}{\delta}+\frac{1}{M}\Big].
\end{array}
\]
For estimation of the term $\mathbb{E}\left[  \left\vert V_{\epsilon}%
(\tau,X^{s,\xi,u}(\tau))-V(\tau,X^{s,\xi,u}(\tau))\right\vert \mathbf{1}%
_{A_{2}}\right]  $ we use Markov's inequality and we see that%
\[%
\begin{array}
[c]{l}%
\mathbb{E}\left[  \left\vert V_{\epsilon}(\tau,X^{s,\xi,u}(\tau))-V(\tau
,X^{s,\xi,u}(\tau))\right\vert \mathbf{1}_{A_{2}}\right]  \leq\left[
\mathbb{E}\left\vert V_{\epsilon}(\tau,X^{s,\xi,u}(\tau))-V(\tau,X^{s,\xi
,u}(\tau))\right\vert \right]  ^{1/2}\left[  \mathbb{E}\left(  \mathbf{1}%
_{A_{2}}\right)  \right]  ^{1/2}\medskip\\
\leq\sqrt{2}\left[  \mathbb{E}\left\vert V_{\epsilon}(\tau,X^{s,\xi,u}%
(\tau))\right\vert ^{2}+\mathbb{E}\left\vert V(\tau,X^{s,\xi,u}(\tau
))\right\vert ^{2}\right]  ^{1/2}\displaystyle\frac{\left[  \mathbb{E}%
\left\vert X^{s,\xi,u}(\tau)\right\vert ^{2}\right]  ^{1/2}}{M}\,.
\end{array}
\]
Now, using (\ref{V sublinear}) and (\ref{bound for sol}) (still true for the
approximating sequence $X_{\epsilon}^{\tau,X^{s,\xi,u}\left(  \tau\right)
,u}$ ),%
\[
\mathbb{E}\left\vert V_{\epsilon}(\tau,X^{s,\xi,u}(\tau))\right\vert
^{2}+\mathbb{E}\left\vert V(\tau,X^{s,\xi,u}(\tau))\right\vert ^{2}\leq
C\big(1+\mathbb{E}\sup_{r\in\left[  \tau-\delta,\tau\right]  }|X^{s,\xi
,u}\left(  r\right)  |^{2p}\big)\leq C\big(1+\left\Vert \xi\right\Vert
_{\mathcal{C}}^{2p}\big).
\]
Therefore inequality (\ref{V approxim 2}) becomes%
\begin{equation}%
\begin{array}
[c]{l}%
\mathbb{E}\left\vert V_{\epsilon}(\tau,X_{\epsilon}^{s,\xi,u}(\tau
))-V(\tau,X^{s,\xi,u}(\tau))\right\vert \medskip\\
\leq\sup\limits_{\left(  t,y\right)  \in\mathrm{B}}\mathbb{E}\left\vert
V_{\epsilon}(t,y)-V(t,y)\right\vert +C\,\boldsymbol{\mu}_{f,h}\left(
\delta,M\right)  +C\big(1+\left\Vert \xi\right\Vert _{\mathcal{C}}%
^{p+1}\big)\displaystyle\frac{1}{M}\medskip\\
\quad\displaystyle+C\big[1+\left\Vert \xi\right\Vert _{\mathcal{C}}%
^{p}\big](1+\varphi^{1/4}\left(  \xi\left(  0\right)  \right)  +\left\Vert
\xi\right\Vert _{\mathcal{C}})\Big[\frac{\epsilon^{1/16}}{\delta}+\frac{1}%
{M}\Big]\,.
\end{array}
\label{V approxim 3}%
\end{equation}
We pass to the proof of DPP (\ref{DPP}). Let $\bar{V}\left(  s,\xi\right)  $
denote the right term from (\ref{DPP}).

Using (\ref{DPP 2}), (\ref{V approxim 3}), inequality%
\[
\mathbb{E}\big[V_{\epsilon}(\tau,X_{\epsilon}^{s,\xi,u}\left(  \tau\right)
)-V(\tau,X_{\epsilon}^{s,\xi,u}\left(  \tau\right)  )\big]\leq\mathbb{E}%
\big[\big|V_{\epsilon}(\tau,X_{\epsilon}^{s,\xi,u}\left(  \tau\right)
)-V(\tau,X_{\epsilon}^{s,\xi,u}\left(  \tau\right)  )\big|\big]
\]
and%
\[%
\begin{array}
[c]{l}%
\displaystyle\mathbb{E}\big[\int_{s}^{\tau}f(r,X_{\epsilon}^{s,\xi
,u}(r),Y_{\epsilon}^{s,\xi,u}(r),u(r))dr-\int_{s}^{\tau}f(r,X^{s,\xi
,u}(r),Y^{s,\xi,u}(r),u(r))dr\big]\medskip\\
\displaystyle\leq\mathbb{E}\big[\int_{s}^{\tau}\left\vert f(r,X_{\epsilon
}^{s,\xi,u}(r),Y_{\epsilon}^{s,\xi,u}(r),u(r))-f(r,X^{s,\xi,u}(r),Y^{s,\xi
,u}(r),u(r))\right\vert dr\big]\medskip\\
\displaystyle\leq C\,\boldsymbol{\mu}_{f,h}\left(  \delta,M\right)
+C\big[1+\left\Vert \xi\right\Vert _{\mathcal{C}}^{p}\big](1+\varphi
^{1/4}\left(  \xi\left(  0\right)  \right)  +\left\Vert \xi\right\Vert
_{\mathcal{C}})\Big[\frac{\epsilon^{1/16}}{\delta}+\frac{1}{M}\Big]
\end{array}
\]
we deduce%
\begin{equation}%
\begin{array}
[c]{l}%
V\left(  s,\xi\right)  \leq V_{\epsilon}\left(  s,\xi\right)  +\left\vert
V_{\epsilon}\left(  s,\xi\right)  -V\left(  s,\xi\right)  \right\vert
\medskip\\
\displaystyle\leq\mathbb{E}\big[\int_{s}^{\tau}f(r,X_{\epsilon}^{s,\xi
,u}(r),Y_{\epsilon}^{s,\xi,u}(r),u(r))dr+V_{\epsilon}(\tau,X_{\epsilon}%
^{s,\xi,u}\left(  \tau\right)  )\big]+\left\vert V_{\epsilon}\left(
s,\xi\right)  -V\left(  s,\xi\right)  \right\vert \medskip\\
\displaystyle\leq\mathbb{E}\big[\int_{s}^{\tau}f(r,X^{s,\xi,u}(r),Y^{s,\xi
,u}(r),u(r))dr+V(\tau,X_{\epsilon}^{s,\xi,u}\left(  \tau\right)
)\big]+\left\vert V_{\epsilon}\left(  s,\xi\right)  -V\left(  s,\xi\right)
\right\vert \medskip\\
\displaystyle\quad+\sup\limits_{\left(  t,y\right)  \in\mathrm{B}}%
\mathbb{E}\left\vert V_{\epsilon}(t,y)-V(t,y)\right\vert +C\,\boldsymbol{\mu
}_{f,h}\left(  \delta,M\right)  +C\big(1+\left\Vert \xi\right\Vert
_{\mathcal{C}}^{p+1}\big)\displaystyle\frac{1}{M}\\
\displaystyle\quad+C\big[1+\left\Vert \xi\right\Vert _{\mathcal{C}}%
^{p}\big](1+\varphi^{1/4}\left(  \xi\left(  0\right)  \right)  +\left\Vert
\xi\right\Vert _{\mathcal{C}})\Big[\frac{\epsilon^{1/16}}{\delta}+\frac{1}%
{M}\Big]\,.
\end{array}
\label{DPP 3}%
\end{equation}
Passing to the limit for $\epsilon\rightarrow0$, $\delta\rightarrow0$ and
$M\rightarrow+\infty$ we obtain%
\begin{equation}
V\left(  s,\xi\right)  \leq\bar{V}\left(  s,\xi\right)  .\label{DPP 4}%
\end{equation}
Conversely, let $\eta>0$. Since $V_{\epsilon}$ satisfies the DPP, there exists
$u_{\epsilon}\in\mathcal{U}\left[  s,T\right]  $ such that%
\[
V_{\epsilon}\left(  s,\xi\right)  +\frac{\eta}{2}\geq\mathbb{E}\big[\int
_{s}^{\tau}f(r,X_{\epsilon}^{s,\xi,u_{\epsilon}}(r),Y_{\epsilon}%
^{s,\xi,u_{\epsilon}}(r),u_{\epsilon}(r))dr+V_{\epsilon}(\tau,X_{\epsilon
}^{s,\xi,u_{\epsilon}}\left(  \tau\right)  )\big]
\]
and, as in the proof of (\ref{DPP 3}), we deduce that%
\[%
\begin{array}
[c]{l}%
V\left(  s,\xi\right)  +\eta\geq V_{\epsilon}\left(  s,\xi\right)
+\eta-\left\vert V_{\epsilon}\left(  s,\xi\right)  -V\left(  s,\xi\right)
\right\vert \medskip\\
\displaystyle\geq\mathbb{E}\big[\int_{s}^{\tau}f(r,X_{\epsilon}^{s,\xi
,u_{\epsilon}}(r),Y_{\epsilon}^{s,\xi,u_{\epsilon}}(r),u_{\epsilon
}(r))dr+V_{\epsilon}(\tau,X_{\epsilon}^{s,\xi,u_{\epsilon}}\left(
\tau\right)  )\big]+\frac{\eta}{2}-\left\vert V_{\epsilon}\left(
s,\xi\right)  -V\left(  s,\xi\right)  \right\vert \medskip\\
\displaystyle\geq\mathbb{E}\big[\int_{s}^{\tau}f(r,X^{s,\xi,u_{\epsilon}%
}(r),Y^{s,\xi,u_{\epsilon}}(r),u(r))dr+V(\tau,X_{\epsilon}^{s,\xi,u_{\epsilon
}}\left(  \tau\right)  )\big]+\frac{\eta}{2}-\left\vert V_{\epsilon}\left(
s,\xi\right)  -V\left(  s,\xi\right)  \right\vert \medskip\\
\displaystyle\quad-\sup\limits_{\left(  t,y\right)  \in\mathrm{B}}%
\mathbb{E}\left\vert V_{\epsilon}(t,y)-V(t,y)\right\vert -C\,\boldsymbol{\mu
}_{f,h}\left(  \delta,M\right)  -C\big(1+\left\Vert \xi\right\Vert
_{\mathcal{C}}^{p+1}\big)\displaystyle\frac{1}{M}\\
\displaystyle\quad-C\big[1+\left\Vert \xi\right\Vert _{\mathcal{C}}%
^{p}\big](1+\varphi^{1/4}\left(  \xi\left(  0\right)  \right)  +\left\Vert
\xi\right\Vert _{\mathcal{C}})\Big[\frac{\epsilon^{1/16}}{\delta}+\frac{1}%
{M}\Big]\,.
\end{array}
\]
Therefore,%
\begin{equation}
V\left(  s,\xi\right)  \geq\bar{V}\left(  s,\xi\right)  .\label{DPP 5}%
\end{equation}
The proof is completed by showing that inequalities (\ref{DPP 4}) and
(\ref{DPP 5}) can be extended for any $\left(  s,\xi\right)  \in
\lbrack0,T)\times\mathcal{C}\big(\left[  -\delta,0\right]  ;\overline
{\mathrm{Dom}\left(  \varphi\right)  }\,\big).$\hfill
\end{proof}

\subsection{Hamilton-Jacobi-Bellman Equation. Viscosity solution}

Since $V$ is defined on $[0,T]\times\mathcal{C}\big(\left[  -\delta,0\right]
;\overline{\mathrm{Dom}\left(  \varphi\right)  }\,\big)$, the associated
Hamilton-Jacobi-Bellman equation we will be an infinite dimensional PDE. In
general the value function $V\left(  s,\xi\right)  $ depend on the initial
path in a complicated way. In order to simplify the problem, our conjecture
will be that the value function $V$ depend on $\xi$ only through $\left(
x,y\right)  $ where%
\[
x=x\left(  \xi\right)  :=\xi\left(  0\right)  \quad\text{and}\quad y=y\left(
\xi\right)  :=\int_{-\delta}^{0}e^{\lambda r}\xi\left(  r\right)  dr.
\]
Hence the problem can be reduced to a finite dimensional optimal control
problem by working with a new value function $\tilde{V}$ given by%
\[
\tilde{V}:\left[  0,T\right]  \times\mathbb{R}^{2d}\rightarrow\mathbb{R}%
\text{,}\quad\tilde{V}\left(  s,x,y\right)  :=V\left(  s,\xi\right)  .
\]
Our aim is to prove that the value function $\tilde{V}$ is a viscosity
solution of the following Hamilton-Jacobi-Bellman type PDE%
\begin{equation}
\left\{
\begin{array}
[c]{r}%
\displaystyle-\frac{\partial\tilde{V}}{\partial s}(s,x,y)+\sup_{u\in
\mathrm{U}}\mathcal{H}\big(s,x,y,z,u,-D_{x}\tilde{V}\left(  s,x,y\right)
,-D_{xx}^{2}\tilde{V}\left(  s,x,y\right)  \big)\medskip\\
-\langle x-e^{-\lambda\delta}z-\lambda y,D_{y}\tilde{V}\left(  s,x,y\right)
\rangle\in\langle-D_{x}\tilde{V}\left(  s,x,y\right)  ,\partial\varphi\left(
x\right)  \rangle,\medskip\\
\text{for }\left(  s,x,y,z\right)  \in\left(  0,T\right)  \times
\overline{\mathrm{Dom}\left(  \varphi\right)  }\times\mathbb{R}^{2d}%
,\medskip\\
\multicolumn{1}{l}{\tilde{V}\left(  T,x,y\right)  =h\left(  x,y\right)  \text{
for }\left(  x,y\right)  \in\overline{\mathrm{Dom}\left(  \varphi\right)
}\times\mathbb{R}^{d},}%
\end{array}
\right.  \label{HJB}%
\end{equation}
where $\mathcal{H}:\left[  0,T\right]  \times\mathbb{R}^{3d}\times
\mathrm{U}\times\mathbb{R}^{d}\times\mathbb{R}^{d\times d}\rightarrow
\mathbb{R}$ is defined by%
\[
\mathcal{H}\left(  s,x,y,z,u,q,X\right)  :=\left\langle b\left(
s,x,y,z,u\right)  ,q\right\rangle +\frac{1}{2}\mathrm{Tr}\left(  \sigma
\sigma^{\ast}\right)  \left(  s,x,y,z,u\right)  X-f\left(  s,x,y,u\right)  .
\]
Let us define, for $x\in\overline{\mathrm{Dom}\left(  \varphi\right)  }$ and
$z\in\mathbb{R}^{d}$,%
\begin{equation}
\partial\varphi_{\ast}(x;z)=\liminf_{\substack{(x^{\prime},z^{\prime
})\rightarrow(x,z)\\x^{\ast}\in\partial\varphi(x^{\prime})}}\left\langle
x^{\ast},z^{\prime}\right\rangle \quad\text{and}\quad\partial\varphi^{\ast
}(x;z)=\limsup_{_{_{\substack{(x^{\prime},z^{\prime})\rightarrow
(x,z)\\x^{\ast}\in\partial\varphi(x^{\prime})}}}}\left\langle x^{\ast
},z^{\prime}\right\rangle \,. \label{def for visc}%
\end{equation}

\begin{remark}
Obviously, $\partial\varphi^{\ast}(x;z)=-\partial\varphi_{\ast}(x;-z).$
\end{remark}

The following technical result is due to \cite{za/12}:

\begin{lemma}
$\left(  i\right)  $ For any $x\in\mathrm{Int}\left(  \mathrm{Dom}\left(
\varphi\right)  \right)  $ and $z\in\mathbb{R}^{d}$%
\begin{equation}
\partial\varphi_{\ast}(x;z)=\inf_{x^{\ast}\in\partial\varphi(x)}\left\langle
x^{\ast},z\right\rangle \label{form of phi}%
\end{equation}
$\left(  ii\right)  $ For any $x\in\mathrm{Bd}\left(  \mathrm{Dom}\left(
\varphi\right)  \right)  $ and $z\in\mathbb{R}^{d}$ such that%
\[
\inf_{\mathrm{n}\in\mathcal{N}(x)}\left\langle \mathrm{n},z\right\rangle >0
\]
equality (\ref{form of phi}) still holds$\smallskip$

\noindent(here $\mathcal{N}(x)$ denotes the exterior normal cone of versors,
in a point $x$ which belongs to the boundary of the domain).
\end{lemma}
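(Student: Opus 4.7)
The plan is to prove both assertions by analyzing the limit inferior defining $\partial\varphi_{\ast}(x;z)$ via two structural properties of the maximal monotone operator $\partial\varphi$: local boundedness on $\mathrm{Int}\left(\mathrm{Dom}\left(\varphi\right)\right)$ and the description, at boundary points, of the recession directions of $\partial\varphi(x)$ through the normal cone $\mathcal{N}(x)$.

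For part $(i)$, I would first note that since $x\in\mathrm{Int}\left(\mathrm{Dom}\left(\varphi\right)\right)$, the set $\partial\varphi(x)$ is nonempty, convex and compact, and $\partial\varphi$ is locally bounded near $x$ (see \cite{ba/10}). Testing the liminf against the constant sequence $(x^{\prime},z^{\prime})=(x,z)$ and letting $x^{\ast}$ range freely over $\partial\varphi(x)$ immediately gives
\[
\partial\varphi_{\ast}(x;z)\leq\inf_{x^{\ast}\in\partial\varphi(x)}\langle x^{\ast},z\rangle.
\]
For the reverse inequality, pick $(x_{n},z_{n},x_{n}^{\ast})$ with $(x_{n},z_{n})\to(x,z)$, $x_{n}^{\ast}\in\partial\varphi(x_{n})$, realizing the liminf. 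Local boundedness extracts a convergent subsequence $x_{n}^{\ast}\to x^{\ast}$, and the closedness of the graph of the maximal monotone operator $\partial\varphi$ places $x^{\ast}\in\partial\varphi(x)$. Passing to the limit yields $\langle x_{n}^{\ast},z_{n}\rangle\to\langle x^{\ast},z\rangle\geq\inf_{y^{\ast}\in\partial\varphi(x)}\langle y^{\ast},z\rangle$, and taking the infimum over such realizing sequences concludes.

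For part $(ii)$, the only new difficulty is that $\partial\varphi(x)$ may be unbounded at a boundary point, so local boundedness is lost; however, the hypothesis $\inf_{\mathrm{n}\in\mathcal{N}(x)}\langle\mathrm{n},z\rangle>0$ will preclude the divergent case. The upper bound follows as before from constant sequences (using that $\partial\varphi(x)\neq\emptyset$ under $(\mathrm{H}_{1})$, so the infimum is a real number). For the lower bound, take a minimizing sequence $(x_{n},z_{n},x_{n}^{\ast})$ and suppose by contradiction that $|x_{n}^{\ast}|\to\infty$ along a subsequence, with $x_{n}^{\ast}/|x_{n}^{\ast}|\to\mathrm{n}$ for some unit vector $\mathrm{n}$. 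Writing monotonicity against an arbitrary $(y,y^{\ast})\in\partial\varphi$ with $y\in\mathrm{Int}(\mathrm{Dom}(\varphi))$, dividing by $|x_{n}^{\ast}|$ and letting $n\to\infty$ produces $\langle\mathrm{n},x-y\rangle\geq0$; by density of $\mathrm{Int}(\mathrm{Dom}(\varphi))$ in $\overline{\mathrm{Dom}(\varphi)}$ this extends to every $y\in\overline{\mathrm{Dom}(\varphi)}$, so $\mathrm{n}\in\mathcal{N}(x)$. Then
\[
\langle x_{n}^{\ast},z_{n}\rangle=|x_{n}^{\ast}|\,\big\langle x_{n}^{\ast}/|x_{n}^{\ast}|,\,z_{n}\big\rangle\longrightarrow+\infty
\]
by the assumption $\langle\mathrm{n},z\rangle>0$, contradicting the finiteness of the liminf secured by the upper bound. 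Hence $(x_{n}^{\ast})$ is bounded, and the argument of part $(i)$ applies verbatim.

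The main obstacle is the identification, in the unbounded case of part $(ii)$, of the limit direction $\mathrm{n}$ of a diverging sequence of subgradients with an actual element of the exterior normal cone $\mathcal{N}(x)$; this is where the rescaling of the monotonicity inequality and the density of $\mathrm{Int}\left(\mathrm{Dom}\left(\varphi\right)\right)$ in $\overline{\mathrm{Dom}\left(\varphi\right)}$ (granted by $(\mathrm{H}_{1})$) are essential. The remaining steps reduce to standard facts about maximal monotone operators, and the attainment of the infimum on the compact set $\partial\varphi(x)$ in part $(i)$ guarantees that each bound in the argument is actually realized.
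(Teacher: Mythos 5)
The first thing to note is that the paper contains no proof of this lemma at all: it is simply quoted as "due to \cite{za/12}", so your argument cannot be matched against an in-paper proof and must stand on its own. Your part $(i)$ does stand: local boundedness of $\partial\varphi$ on $\mathrm{Int}\left(\mathrm{Dom}\left(\varphi\right)\right)$, the closed-graph property of the maximal monotone operator, and the constant-sequence upper bound are exactly the standard (and correct) argument. The central idea of your part $(ii)$ --- the dichotomy between bounded and divergent subgradient sequences, with divergent directions identified as unit exterior normals by rescaling the monotonicity inequality and using the density of $\mathrm{Int}\left(\mathrm{Dom}\left(\varphi\right)\right)$ in $\overline{\mathrm{Dom}\left(\varphi\right)}$ --- is also the right one, and that computation is carried out correctly.

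There is, however, a genuine flaw in how part $(ii)$ is assembled: you assert that $\partial\varphi(x)\neq\emptyset$ for $x\in\mathrm{Bd}\left(\mathrm{Dom}\left(\varphi\right)\right)$ "under $(\mathrm{H}_{1})$", and both your upper bound and your proof by contradiction (which needs the liminf to be \emph{finite}) rest on this. The assertion is false: take $\varphi(x)=1-\sqrt{1-|x|^{2}}$ on the closed unit ball and $+\infty$ outside; this satisfies $(\mathrm{H}_{1})$, yet $\partial\varphi(x)=\emptyset$ at every point with $|x|=1$, since the gradient blows up at the boundary. In that situation the right-hand side of (\ref{form of phi}) is $+\infty$ (infimum over the empty set) and your contradiction argument collapses, because there is no finiteness to contradict. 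The statement itself survives, and in fact your own rescaling computation proves it: if $\partial\varphi(x)=\emptyset$, no sequence $x_{n}^{\ast}\in\partial\varphi(x_{n})$ with $x_{n}\rightarrow x$ can have a bounded subsequence (the closed graph would otherwise produce an element of $\partial\varphi(x)$), so $|x_{n}^{\ast}|\rightarrow\infty$, every limit direction lies in $\mathcal{N}(x)$, and the hypothesis $\inf_{\mathrm{n}\in\mathcal{N}(x)}\left\langle \mathrm{n},z\right\rangle >0$ forces $\left\langle x_{n}^{\ast},z_{n}\right\rangle \rightarrow+\infty$, i.e. $\partial\varphi_{\ast}(x;z)=+\infty$ as well. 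The repair is therefore structural rather than substantive: drop the contradiction and argue directly by the dichotomy (a bounded subsequence gives $\partial\varphi_{\ast}(x;z)\geq\inf_{x^{\ast}\in\partial\varphi(x)}\left\langle x^{\ast},z\right\rangle$ via the closed graph; no bounded subsequence gives $\partial\varphi_{\ast}(x;z)=+\infty$, which dominates any infimum), and invoke the constant-sequence upper bound only when $\partial\varphi(x)\neq\emptyset$, the other case being trivial. With that restructuring your proof is complete.
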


It is easy to see that in the particular case of $\varphi$ being the indicator
function of a closed convex set $\mathcal{K}$ (i.e. $\varphi\left(  x\right)
=0$, if $x\in\mathcal{K}$ and $\varphi\left(  x\right)  =+\infty$ if
$x\notin\mathcal{K}$), we obtain the form:%
\[
\partial\varphi_{\ast}(x;z)=\left\{
\begin{array}
[c]{rl}%
0, & \text{if }x\in\mathrm{Int}\left(  \mathrm{Dom}\left(  \varphi\right)
\right)  \text{ or if }x\in\mathrm{Bd}\left(  \mathrm{Dom}\left(
\varphi\right)  \right)  \text{ with }\inf_{\mathrm{n}\in\mathcal{N}%
(x)}\left\langle \mathrm{n},z\right\rangle >0,\medskip\\
-\infty, & \text{if }\inf_{\mathrm{n}\in\mathcal{N}(x)}\left\langle
\mathrm{n},z\right\rangle \leq0.
\end{array}
\right.
\]

We define the viscosity solution for HJB equation (\ref{HJB}):

\begin{definition}
Let $v:(0,T]\times\overline{\mathrm{Dom}\left(  \varphi\right)  }%
\times\mathbb{R}^{d}\rightarrow\mathbb{R}$ be a continuous function which
satisfies $v(T,x,y)=h\left(  x\right)  ,\;\forall~\left(  x,y\right)
\in\overline{\mathrm{Dom}\left(  \varphi\right)  }\times\mathbb{R}^{d}%
.$\medskip

\noindent$\left(  a\right)  $ We say that $v$ is a viscosity subsolution of
(\ref{HJB}) if in any point $\left(  s,x,y\right)  \in(0,T]\times
\overline{\mathrm{Dom}\left(  \varphi\right)  }\times\mathbb{R}^{d}$ which is
a maximum point for $v-\Psi$, where $\Psi\in C^{1,2,1}(\left(  0,T\right)
\times\overline{\mathrm{Dom}\left(  \varphi\right)  }\times\mathbb{R}%
^{d};\mathbb{R)}$, the following inequality is satisfied:%
\[%
\begin{array}
[c]{r}%
-\dfrac{\partial\Psi}{\partial t}\left(  s,x,y\right)  +\sup_{u\in\mathrm{U}%
}\mathcal{H}\left(  s,x,y,z,u,-D_{x}\Psi\left(  s,x,y\right)  ,-D_{xx}^{2}%
\Psi\left(  s,x,y\right)  \right)  \medskip\\
-\left\langle x-e^{-\lambda\delta}z-\lambda y,D_{y}\Psi\left(  s,x,y\right)
\right\rangle \leq\partial\varphi^{\ast}(x;-D_{x}\Psi(t,x,y))\,.
\end{array}
\]
\noindent$\left(  b\right)  $\textit{ }We say that $v$ is a viscosity
supersolution of (\ref{HJB}) if in any point $\left(  s,x,y\right)
\in(0,T]\times\overline{\mathrm{Dom}\left(  \varphi\right)  }\times
\mathbb{R}^{d}$ which is a minimum point for $v-\Psi$, where $\Psi\in
C^{1,2,1}(\left(  0,T\right)  \times\overline{\mathrm{Dom}\left(
\varphi\right)  }\times\mathbb{R}^{d};\mathbb{R)}$, the following inequality
is satisfied:%
\[%
\begin{array}
[c]{r}%
-\dfrac{\partial\Psi}{\partial t}\left(  s,x,y\right)  +\sup_{u\in\mathrm{U}%
}\mathcal{H}\left(  s,x,y,z,u,-D_{x}\Psi\left(  s,x,y\right)  ,-D_{xx}%
\Psi\left(  s,x,y\right)  \right)  \medskip\\
-\left\langle x-e^{-\lambda\delta}z-\lambda y,D_{y}\Psi\left(  s,x,y\right)
\right\rangle \geq\partial\varphi_{\ast}(x;-D_{x}\Psi(t,x,y))\,.
\end{array}
\]
\noindent$(c)$ We say that $v$ is a viscosity solution of (\ref{HJB}) if it is
both a viscosity sub- and super-solution.
\end{definition}

\begin{theorem}
\label{existence of visc}Under the assumptions $\mathrm{(H}_{1}-\mathrm{H}%
_{4}\mathrm{)}$ the value function $\tilde{V}$ is a viscosity solution of
(\ref{HJB}).
\end{theorem}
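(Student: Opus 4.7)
My strategy is to adapt the penalization method of \cite{za/12} to the delay setting, leveraging the uniform approximation result Lemma \ref{technical lemma}. A preliminary observation: writing $Y(t)=e^{-\lambda t}\int_{t-\delta}^{t}e^{\lambda u}X(u)\,du$ and differentiating yields
\[
dY(t)=\bigl(X(t)-e^{-\lambda\delta}Z(t)-\lambda Y(t)\bigr)dt,
\]
so $Y$ has no martingale part and the pair $(X,Y)$ furnishes a finite-dimensional Markovian state. This is exactly why the term $-\langle x-e^{-\lambda\delta}z-\lambda y,D_y\tilde V\rangle$ appears in (\ref{HJB}).

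First I would show that the penalized value function $\tilde V_\epsilon$ (defined under the same structural assumption that it depends on $\xi$ only through $(x,y)$) is a viscosity solution of the smooth HJB
\[
-\partial_s\tilde V_\epsilon+\sup_{u\in\mathrm{U}}\mathcal{H}\bigl(s,x,y,z,u,-D_x\tilde V_\epsilon,-D_{xx}^2\tilde V_\epsilon\bigr)-\langle x-e^{-\lambda\delta}z-\lambda y,D_y\tilde V_\epsilon\rangle+\langle\nabla\varphi_\epsilon(x),D_x\tilde V_\epsilon\rangle=0.
\]
Since $\nabla\varphi_\epsilon$ is globally Lipschitz, equation (\ref{SVI delayed approxim 2}) is a classical controlled SDE, and this HJB follows from Lemma \ref{theorem 4.2} by the standard argument: write the DPP for a test function $\Psi\in C^{1,2,1}$, apply It\^o's formula to $\Psi(t,X_\epsilon(t),Y_\epsilon(t))$, divide by the horizon and let it shrink to zero.

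Next I would pass to the limit $\epsilon\to 0$. For the subsolution property, let $(s_0,x_0,y_0)$ be a maximum point of $\tilde V-\Psi$, which after a small quadratic perturbation of $\Psi$ may be assumed strict. Uniform convergence $\tilde V_\epsilon\to\tilde V$ on compacts (Lemma \ref{technical lemma}) produces local maxima $(s_\epsilon,x_\epsilon,y_\epsilon)$ of $\tilde V_\epsilon-\Psi$ with $(s_\epsilon,x_\epsilon,y_\epsilon)\to(s_0,x_0,y_0)$. The subsolution inequality for $\tilde V_\epsilon$ at these points reads
\[
-\partial_s\Psi+\sup_u\mathcal{H}(\cdots)-\langle x_\epsilon-e^{-\lambda\delta}z-\lambda y_\epsilon,D_y\Psi\rangle\leq\langle-D_x\Psi(s_\epsilon,x_\epsilon,y_\epsilon),\nabla\varphi_\epsilon(x_\epsilon)\rangle.
\]
By (\ref{ineq Yosida}-\textit{iii}), $\nabla\varphi_\epsilon(x_\epsilon)\in\partial\varphi(J_\epsilon(x_\epsilon))$ with $J_\epsilon(x_\epsilon)\to x_0$, so the pair $(J_\epsilon(x_\epsilon),\nabla\varphi_\epsilon(x_\epsilon))$ is admissible in the limsup defining $\partial\varphi^{\ast}(x_0;-D_x\Psi(s_0,x_0,y_0))$. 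Taking $\limsup_{\epsilon\to 0}$ then yields the required subsolution inequality, and the supersolution property follows symmetrically using minimum points and the liminf definition of $\partial\varphi_{\ast}$. The terminal condition $\tilde V(T,x,y)=h(x,y)$ is immediate from (\ref{funct value 2}).

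The principal obstacle is precisely this limit passage of the penalty term, since $|\nabla\varphi_\epsilon(x_\epsilon)|$ may blow up when $x_0\in\mathrm{Bd}(\mathrm{Dom}(\varphi))$ and no uniform control is available from the a priori estimates. The envelopes $\partial\varphi^{\ast}$ and $\partial\varphi_{\ast}$, defined as limsup/liminf over sequences $x'\to x$ with $x^{\ast}\in\partial\varphi(x')$, are designed exactly to absorb this unboundedness and give meaning to the multivalued inclusion in (\ref{HJB}) at the boundary. A secondary technical issue is ensuring the strict-maximum perturbation of $\Psi$ remains compatible with the state constraint $x\in\overline{\mathrm{Dom}(\varphi)}$, but this is standard since the test functions in the viscosity definition are admitted on the whole constrained domain.
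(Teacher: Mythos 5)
Your proposal is correct and follows essentially the same route as the paper: penalize with $\nabla\varphi_\epsilon$, use the uniform convergence of $\tilde V_\epsilon$ to $\tilde V$ on compacts (Lemma \ref{technical lemma}) to get converging maximum points, invoke the viscosity property of $\tilde V_\epsilon$ for the penalized HJB, and pass to the limit via $\nabla\varphi_\epsilon(x_\epsilon)\in\partial\varphi(J_\epsilon(x_\epsilon))$, $J_\epsilon(x_\epsilon)\to x$, and the envelopes $\partial\varphi^{\ast}$, $\partial\varphi_{\ast}$. The only cosmetic difference is that you re-derive the penalized finite-dimensional HJB from the DPP (Lemma \ref{theorem 4.2}) by the standard It\^o argument, whereas the paper simply cites Theorem 4.1 of \cite{la-ri/03} with drift $b-\nabla\varphi_\epsilon$; you also make explicit the strict-maximum perturbation that the paper leaves implicit.
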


\begin{proof}
Let $\Psi\in C^{1,2,1}(\left(  0,T\right)  \times\overline{\mathrm{Dom}\left(
\varphi\right)  }\times\mathbb{R}^{d};\mathbb{R)}$ such that $V-\Psi$ has a
local maximum in $\left(  s,x,y\right)  \in(0,T]\times\overline{\mathrm{Dom}%
\left(  \varphi\right)  }\times\mathbb{R}^{d}.$ If $\epsilon>0$ it is easy to
deduce that there exists $\left(  s_{\epsilon},x_{\epsilon},y_{\epsilon
}\right)  \in(0,T]\times\overline{\mathrm{Dom}\left(  \varphi\right)  }%
\times\mathbb{R}^{d}$ such that $V_{\epsilon}-\Psi$ has a local maximum in
$\left(  s_{\epsilon},x_{\epsilon},y_{\epsilon}\right)  $ and $\left(
s_{\epsilon},x_{\epsilon},y_{\epsilon}\right)  \rightarrow\left(
s,x,y\right)  $, as $\epsilon\rightarrow0$.

Applying Theorem 4.1 from \cite{la-ri/03} (with $b\left(  s,x,y,z,u\right)  $
replaced by $b\left(  s,x,y,z,u\right)  -\nabla\varphi_{\epsilon}\left(
x\right)  $) we deduce that $V_{\epsilon}$ is a viscosity subsolution of HJB
equation%
\[
\left\{
\begin{array}
[c]{r}%
\displaystyle-\frac{\partial\tilde{V}}{\partial s}(s,x,y)+\sup_{u\in
\mathrm{U}}\mathcal{H}\big(s,x,y,z,u,-D_{x}\tilde{V}\left(  s,x,y\right)
,-D_{xx}^{2}\tilde{V}\left(  s,x,y\right)  \big)\medskip\\
-\langle x-e^{-\lambda\delta}z-\lambda y,D_{y}\tilde{V}\left(  s,x,y\right)
\rangle=\langle-D_{x}\tilde{V}\left(  s,x,y\right)  ,\nabla\varphi_{\epsilon
}\left(  x\right)  \rangle,\medskip\\
\text{for }\left(  s,x,y,z\right)  \in\left(  0,T\right)  \times
\overline{\mathrm{Dom}\left(  \varphi\right)  }\times\mathbb{R}^{2d}%
,\medskip\\
\multicolumn{1}{l}{\tilde{V}\left(  T,x,y\right)  =h\left(  x,y\right)  \text{
for }\left(  x,y\right)  \in\overline{\mathrm{Dom}\left(  \varphi\right)
}\times\mathbb{R}^{d}.}%
\end{array}
\right.
\]
Being a viscosity subsolution the following inequality is satisfied:%
\[%
\begin{array}
[c]{r}%
-\dfrac{\partial\Psi}{\partial t}\left(  s_{\epsilon},x_{\epsilon}%
,y_{\epsilon}\right)  +\sup_{u\in\mathrm{U}}\mathcal{H}\left(  s_{\epsilon
},x_{\epsilon},y_{\epsilon},z,u,-D_{x}\Psi\left(  s_{\epsilon},x_{\epsilon
},y_{\epsilon}\right)  ,-D_{xx}^{2}\Psi\left(  s_{\epsilon},x_{\epsilon
},y_{\epsilon}\right)  \right)  \medskip\\
-\langle x_{\epsilon}-e^{-\lambda\delta}z-\lambda y_{\epsilon},D_{y}%
\Psi\left(  s_{\epsilon},x_{\epsilon},y_{\epsilon}\right)  \rangle\leq
\langle-D_{x}\Psi\left(  s_{\epsilon},x_{\epsilon},y_{\epsilon}\right)
,\nabla\varphi_{\epsilon}\left(  x_{\epsilon}\right)  \rangle\,.
\end{array}
\]
Using (\ref{ineq Yosida})-$\left(  iii\right)  $, the convergence
$J_{\epsilon}\left(  x_{\epsilon}\right)  \rightarrow x$ as $\epsilon
\rightarrow0$ and definition (\ref{def for visc}), we obtain%
\[%
\begin{array}
[c]{r}%
-\dfrac{\partial\Psi}{\partial t}\left(  s,x,y\right)  +\sup_{u\in\mathrm{U}%
}\mathcal{H}\left(  s,x,y,z,u,-D_{x}\Psi\left(  s,x,y\right)  ,-D_{xx}^{2}%
\Psi\left(  s,x,y\right)  \right)  \medskip\\
-\langle x-e^{-\lambda\delta}z-\lambda y,D_{y}\Psi\left(  s,x,y\right)
\rangle\leq\partial\varphi^{\ast}(x;-D_{x}\Psi(t,x,y))\,.
\end{array}
\]
Similar arguments show that $v$ is also a viscosity supersolution. Hence $v$
is a viscosity solution of HJB (\ref{HJB}).\hfill
\end{proof}

\end{document}